\newcommand*{\widebar}{\overline}
\newcommand{\cyclic}{\mathop{\kern0.9ex{{+}\kern-2.10ex\raise-0.20
      ex\hbox{\Large\hbox{$\circlearrowright$}}}}\limits}
\newcommand{\acts}{\mbox{ \raisebox{0.26ex}{\tiny{$\bullet$}} }}
\def\N{\ifmmode{\mathbb N}\else{$\mathbb N$}\fi}
\def\Z{\ifmmode{\mathbb Z}\else{$\mathbb Z$}\fi}
\def\Q{\ifmmode{\mathbb Q}\else{$\mathbb Q$}\fi}
\def\R{\ifmmode{\mathbb R}\else{$\mathbb R$}\fi}
\def\C{\ifmmode{\mathbb C}\else{$\mathbb C$}\fi}
\def\K{\ifmmode{\mathbb K}\else{$\mathbb K$}\fi}
\def\P{\ifmmode{\mathbb P}\else{$\mathbb P$}\fi}
\def\g{\ifmmode{\mathfrak g}\else {$\mathfrak g$}\fi}
\def\h{\ifmmode{\mathfrak h}\else {$\mathfrak h$}\fi}
\def\a{\ifmmode{\mathfrak a}\else {$\mathfrak a$}\fi}
\def\k{\ifmmode{\mathfrak k}\else {$\mathfrak k$}\fi}
\def\p{\ifmmode{\mathfrak p}\else {$\mathfrak p$}\fi}
\def\b{\ifmmode{\mathfrak b}\else {$\mathfrak b$}\fi}
\def\n{\ifmmode{\mathfrak n}\else {$\mathfrak n$}\fi}
\def\m{\ifmmode{\mathfrak m}\else {$\mathfrak m$}\fi}
\def\t{\ifmmode{\mathfrak t}\else {$\mathfrak t$}\fi}
\def\O{\ifmmode{\mathscr{O}}\else {$\mathscr{O}$}\fi}
\def\W{\ifmmode{\mathcal{V}}\else {$\mathscr{W}$}\fi}
\def\id{{\rm id}}
\def\hq{/\hspace{-0.14cm}/}
\def\kleinematrix#1,#2,#3,#4,{\begin{pmatrix}#1 & #2 \\ #3 & #4
  \end{pmatrix}}
\newcommand{\mbb}[1]{\mathbb{#1}}
\newcommand{\wt}[1]{\widetilde{#1}}
\newcommand{\ol}[1]{\overline{#1}}
\DeclareMathOperator{\Ad}{Ad}
\DeclareMathOperator{\Lie}{Lie}
\DeclareMathOperator{\dom}{dom}
\DeclareMathOperator{\Type}{Type}
\DeclareMathOperator{\Image}{Im}
\newtheoremstyle{daniel}{3.0mm}{0mm}{\itshape}{}{\bfseries}{.}{1.5mm}{}
\theoremstyle{daniel}
\newtheorem{thm}{Theorem}[section]
\newtheorem{prop}[thm]{Proposition}
\newtheorem{Defi}[thm]{Definition}
\newtheorem{lemma}[thm]{Lemma}
\newtheorem{cor}[thm]{Corollary}
\newtheorem{Exs}[thm]{Examples}
\newtheorem{Ex}[thm]{Example}
\newtheorem{Rems}[thm]{Remarks}
\newtheorem{Rem}[thm]{Remark}
\newtheorem*{thm*}{Theorem}
\newtheorem*{cor*}{Corollary}
\newtheorem*{thm3.6}{Theorem 3.6}
\newtheorem*{thm4.3}{Theorem 4.3}
\newtheorem*{mainthm}{Main Theorem}
\newtheorem*{prop*}{Proposition}
\newtheorem*{Notation}{Notation}
\newtheorem*{subclaim}{Subclaim}
\newenvironment{rem}   {\begin{Rem}\em}{\end{Rem}}
\newenvironment{rems}   {\begin{Rems}\em}{\end{Rems}}
\newenvironment{defi}  {\begin{Defi}\em}{\end{Defi}}
\newenvironment{ex}  {\begin{Ex}\em}{\end{Ex}}
\numberwithin{equation}{section}
 \def\polhk#1{\setbox0=\hbox{#1}{\ooalign{\hidewidth
  \lower1.5ex\hbox{`}\hidewidth\crcr\unhbox0}}}
  \def\polhk#1{\setbox0=\hbox{#1}{\ooalign{\hidewidth
  \lower1.5ex\hbox{`}\hidewidth\crcr\unhbox0}}}
\providecommand{\bysame}{\leavevmode\hbox to3em{\hrulefill}\thinspace}
\providecommand{\MR}{\relax\ifhmode\unskip\space\fi MR }
\providecommand{\href}[2]{#2}
\begin{document}

\title{Invariant meromorphic functions on Stein spaces}

\author{Daniel Greb and Christian Miebach}

\address{Albert-Ludwigs-Universit\"at\\
Mathematisches Institut\\
Abteilung f\"ur Reine Mathematik\\
Eckerstr.~1\\
79104 Freiburg im Breisgau\\
Germany}
\email{daniel.greb@math.uni-freiburg.de}{}
\urladdr{\href{http://home.mathematik.uni-freiburg.de/dgreb}
{http://home.mathematik.uni-freiburg.de/dgreb}}

\address{Laboratoire de Math\'ematiques Pures et Appliqu\'ees\\
Universit\'e du Littoral\\
50, rue F.~Buisson\\
62228 Calais Cedex\\
France}
\email{miebach@lmpa.univ-littoral.fr}{}
\urladdr{\href{http://www-lmpa.univ-littoral.fr/~miebach/}
{http://www-lmpa.univ-littoral.fr/~miebach/}}

\thanks{\emph{Mathematical Subject Classification:} 32M05, 32Q28, 32A20, 14L30, 22E46}

\thanks{\emph{Keywords:} Lie group action, Stein space, invariant meromorphic
function, Rosenlicht quotient}

\thanks{The first author acknowledges the partial support by DFG-Forschergruppe
790 ''Classification of Algebraic Surfaces and Compact Complex Manifolds''.
Both authors would like to thank Peter Heinzner for fruitful discussions and
gratefully ac\-know\-ledge the hospitality of the Fakult\"at f\"ur
Mathematik at Ruhr-Universit\"at Bochum where parts of the research presented
in this article were carried out.}

\date{14th October 2010}

\begin{abstract}
In this paper we develop fundamental tools and methods to study meromorphic
functions in an equivariant setup. As our main result we construct quotients of
Rosenlicht-type for Stein spaces acted upon holomorphically by
complex-reductive Lie groups and their algebraic subgroups. In particular, we
show that in this setup invariant meromorphic functions separate orbits in
general position. Applications to almost homogeneous spaces and principal orbit
types are given. Furthermore, we use the main result to investigate the
relation between holomorphic and meromorphic invariants for reductive group
actions. As one important step in our proof we obtain a weak equivariant
analogue of Narasimhan's embedding theorem for Stein spaces.

%
\end{abstract}

\maketitle
\section{Introduction}

One of the fundamental results relating invariant theory and the geometry of
algebraic group actions is Rosenlicht's Theorem \cite[Thm.~2]{Rosenlicht2}: for
any action of a linear algebraic group on an algebraic variety there exists a
finite set of invariant rational functions that separate orbits in general
position. Moreover, there exists a rational quotient, i.e., a Zariski-open
invariant subset on which the action admits a geome\-tric quotient. It is the
purpose of this paper to study meromorphic functions invariant under
holomorphic group actions and to construct quotients of Rosenlicht-type in the
analytic category.

Examples of non-algebraic holomorphic actions of $\mbb{C}^*$ on projective
surfaces with nowhere Hausdorff orbit space show that even in the compact
analytic case an analogue of Rosenlicht's Theorem does not hold without
further assumptions. If a complex-reductive group acts \emph{meromorphically}
on a compact K\"ahler space (and more generally a compact complex space of
class~$\mathscr{C}$), existence of meromorphic quotients was shown by
Lieberman~\cite{Lieberman} and Fujiki~\cite{FujikiAutomorphisms}.

As a natural starting point in the non-compact case we consider group actions
on spaces with rich function theory such as Stein spaces. Actions of reductive
groups and their subgroups on these spaces are known to possess many features
of algebraic group actions. However, while the holomorphic invariant theory in
this setup is well understood, cf.~\cite{HeinznerGIT}, invariant meromorphic
functions until now have been less studied.

In this paper we develop fundamental tools to study meromorphic functions in an
equivariant setup. We use these tools to prove the following result, which
provides a natural generalisation of Rosenlicht's Theorem to Stein spaces with
actions of complex-reductive groups.

\begin{mainthm}
Let $H<G$ be an algebraic subgroup of a complex-reductive Lie group $G$ and let
$X$ be a Stein $G$-space. Then, there exists an $H$-invariant
Zariski-open dense subset $\Omega$ in $X$ and a holomorphic map
$p\colon\Omega\to Q$ to a Stein space $Q$ such that
\vspace{-0.25cm}
\begin{enumerate}
\item the map $p$ is a geometric quotient for the $H$-action on $\Omega$,
\item the map $p$ is universal with respect to $H$-stable analytic subsets of
$\Omega$,
\item the map $p$ is a submersion and realises $\Omega$ as a topological fibre
bundle over $Q$,
\item the map $p$ extends to a weakly meromorphic map (in the sense of Stoll)
from $X$ to $Q$,
\item for every $H$-invariant meromorphic function $f\in \mathscr{M}_X(X)^H$,
there exists a unique meromorphic function $\bar f \in \mathscr{M}_Q(Q)$
such that $f|_\Omega = \bar f \circ p$, and
\item the $H$-invariant meromorphic functions on $X$ separate the $H$-orbits in
$\Omega$.
\end{enumerate}
\end{mainthm}

The idea of proof is  to first establish a weak equivariant analogue of
Remmert's and Narasimhan's embedding theorem for Stein spaces
\cite{NarasimhanEmbedding}. More precisely, given a $G$-irreducible Stein
$G$-space we prove the existence of a $G$-equivariant holomorphic map into a
finite-dimensional $G$-repre\-sen\-ta\-tion space $V$ that is a proper
embedding when restricted to a big Zariski-open $G$-invariant subset, see
Proposition~\ref{prop:genericembedding}. Since the $G$-action on $V$ is
algebraic, we may then apply Rosenlicht's Theorem to this linear action.
Subsequently, a careful comparison of algebraic and holomorphic geometric
quotients allows us to carry over the existence of a Rosenlicht-type quotient
from $V$ to $X$.

The geometric quotient constructed in this paper provides us with a new and
effective tool to investigate invariant meromorphic functions on Stein spaces.
In the following we shortly describe two typical applications of our main
result.

Given a Stein $G$-space we show that every  invariant meromorphic function is a
quotient of two invariant holomorphic functions precisely if the generic fibre
of the natural invariant-theoretic quotient $\pi\colon X \to X\hq G$ contains a
dense orbit, see Theorem~\ref{thm:aHquniversal}. An important class of examples
for this situation consists of representation spaces of semisimple groups $G$.

An important fundamental result of
Richardson~\cite{RichardsonPrincipalSteinManifolds} states that in every
connected Stein $G$-manifold there exists an open and dense subset on which all
isotropy groups are conjugate in $G$. Under further assumptions on the group
action we use the Main Theorem to sharpen Richardson's result by showing that
there exists a Zariski-open subset on which the conjugacy class of stabiliser
groups is constant, see Proposition~\ref{prop:genericreductivestab}. In
particular, for every effective torus action on a Stein manifold we find a
Zariski-open subset that is a principal fibre bundle over the meromorphic
quotient, cf.\ Remark~\ref{rem:torusaction}.

This paper is organised as follows. In Section~2 we introduce the necessary
background on actions of complex-reductive groups and on
related notions of quotient spaces. Furthermore, we shortly discuss the main
technical tools used in this paper. In Sections~3 and~4 we give applications
of the Main Theorem as well as some examples which illustrate that the result
does not hold for non-algebraic subgroups of $G$. In Section~5 we establish the
Weak Equivariant Embedding Theorem, before we prove the Main Theorem in the
final Sections~6 and~7.

\section{Preliminaries: Definitions and Tools}

In the following, all \emph{complex spaces} are assumed to be reduced and to
have countable topology. If $\mathscr{F}$ is a sheaf on a complex space $X$,
and $U \subset X$ is an open subset, then $\mathscr{F}(U)$ denotes the set of
sections of $\mathscr{F}$ over $U$. By definition, analytic subsets of complex
spaces are closed. Furthermore, an \emph{algebraic group} is by definition
linear algebraic, i.e., a closed algebraic subgroup of some $GL_N(\C)$.

\subsection{Actions of Lie groups}

If $L$ is a real Lie group, then a \emph{complex $L$-space $Z$} is a complex
space with a real-analytic action $\alpha\colon L\times Z\to Z$ such that all
the maps $\alpha_g\colon Z\to Z$, $z\mapsto\alpha(g,z)=:g\acts z$ are
holomorphic. If $L$ is a complex Lie group, a \emph{holomorphic $L$-space $Z$}
is a complex $L$-space such that the action map $\alpha\colon L\times Z\to Z$
is holomorphic. If $X$ is at the same time a Stein space and a holomorphic
$L$-space, we shortly say that $X$ is a \emph{Stein $L$-space}. A complex
$L$-space is called \emph{$L$-irreducible} if $L$ acts transitively on the set
of irreducible components of $X$. Note that in this case $X$ is
automatically pure-dimensional. If the set $X/L$ of $L$-orbits can be endowed
with the structure of a complex space such that the quotient map $p\colon X\to
X/L$ is holomorphic, then $X$ is $L$-irreducible if and only if $X/L$ is
irreducible. In particular, under this condition $\mathscr{M}_{X/L}(X/L)$ and
$\mathscr{M}_X(X)^L$ are fields.

\subsection{Geometric quotients}

One of the main tasks in the proof of the Main Theorem is the construction of a
geometric quotients for the action of complex Lie groups on complex spaces in
the sense of the following definition.

\begin{defi}\label{defi:geometricquotient}
Let $L$ be a complex Lie group and let $X$ be a holomorphic $L$-space. A
\emph{geometric quotient} for the action of $L$ on $X$ is a holomorphic map
$p\colon X\to Q$ onto a complex space $Q$ such that
\begin{enumerate}
\item for all $x\in X$, we have $p^{-1}\bigl(p(x)\bigr)=L\acts x$,
\item $Q$ has the quotient topology with respect to $p$,
\item $(\pi_*\mathscr{O}_X)^L= \mathscr{O}_Q$.
\end{enumerate}
\end{defi}

If a geometric quotient $p\colon X\to Q$ for the action of $L$ exists, we can
identify $Q$ with the set of $L$-orbits in $X$ and we will often write $X/L$
instead of $Q$. The map $p\colon X\to Q$ has the following universality
property: for any $G$-invariant holomorphic map $\phi\colon X \to Y$ into a
complex space $Y$ there exists a uniquely defined holomorphic map $\ol{\phi}: Q
\to Y$ such that $\phi = \ol{\phi} \circ p$. We call a geometric quotient
$p\colon X \to Q$ \emph{universal with respect to invariant analytic subsets}
if for every such set $A \subset X$ the restriction $p|_A\colon A \to p(A)$ is
a geometric quotient for the $L$-action on $A$. Note that $p(A)$ is always an
analytic subset of $Q$, see Lemma~\ref{lem:imageundergeomquotmap}.

\begin{rem}
We also use the corresponding concepts in the algebraic category. Note that in
this case item (2) of Definition~\ref{defi:geometricquotient} requires the
quotient to have the quotient Zariski-topology with respect to the map $p$.
\end{rem}

The following general existence result for geometric quotients in the algebraic
category by Rosenlicht is the starting point of this paper.

\begin{thm}[Thm.~2 of \cite{Rosenlicht2}]\label{thm:Rosenlicht}
Let $H$ be a linear algebraic group and $X$ an $H$-irreducible algebraic
$H$-variety. Then, there exists an $H$-invariant Zariski-open dense subset $U$
of $X$ that admits a geometric quotient. Furthermore, this quotient fulfills
$\C(U/H)=\C(X)^H$.
\end{thm}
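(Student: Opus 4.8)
This is Rosenlicht's classical theorem on rational quotients, so the plan is to build the quotient map out of the field of invariant rational functions and then carve out an open set on which it becomes a genuine geometric quotient.

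\emph{Reduction and construction.} First I would reduce to the case that $X$ is irreducible and $H$ is connected: since $H$ permutes the finitely many irreducible components of the $H$-irreducible space $X$ transitively, fixing one component and passing to the finite-index algebraic subgroup stabilising it disposes of reducibility, while the finite component group $H/H^\circ$ can be split off at the end by a finite-group quotient. For irreducible $X$ the field $L \definiere \C(X)^H$ is finitely generated over $\C$, being an intermediate field of the finitely generated extension $\C(X)/\C$. Choosing a normal quasi-projective model $Y$ with $\C(Y)=L$, the inclusion $L \embeds \C(X)$ defines a dominant rational map from $X$ to $Y$ whose coordinate functions are invariant; after replacing $X$ by a dense open $H$-invariant subset I may assume this is a surjective morphism $\pi\colon X \to Y$, constant along $H$-orbits.

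\emph{Irreducibility and dimension of generic fibres.} Let $d$ be the maximal dimension of an $H$-orbit. Since $x \mapsto \dim(H\acts x)$ is lower semicontinuous and $H$-invariant, the locus of $d$-dimensional orbits is open, dense and invariant, and I shrink $X$ to it. In characteristic zero $L$ is algebraically closed in $\C(X)$: any $f$ algebraic over $L$ has only finitely many $H$-conjugates (the coefficients of its minimal polynomial over $L$ are invariant), and connectedness of $H$ forces $f$ to be fixed; hence the generic fibre of $\pi$ is geometrically irreducible, so that fibres over general closed points are irreducible. Moreover every invariant rational function is a rational expression in the coordinates of $\pi$, hence constant on the fibres of $\pi$; consequently ``invariant rational functions separate generic orbits'' is equivalent to ``generic fibres of $\pi$ are single orbits''.

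\emph{Separation of generic orbits (the crux).} Every fibre contains the orbit through each of its points, so $\dim Y \le \dim X - d$; equivalently the orbit-relation variety $R \definiere \{(x, h\acts x) : x\in X,\ h\in H\}$, the constructible image of $H\times X \to X\times X$ with $\dim R = \dim X + d$, is contained in $X\times_Y X$, whose dimension is $2\dim X - \dim Y$. The substance of the theorem is the reverse inequality $\trdeg_\C L \ge \dim X - d$, i.e.\ that $R$ is \emph{dense} in $X\times_Y X$: there are enough invariant rational functions to separate a generic pair of orbits. This is the main obstacle, and it is exactly the statement that fails for general non-algebraic group actions; I would prove it following Rosenlicht by constructing a rational section of the orbit map over a transversal to the generic orbits, which exhibits $\dim X - d$ independent invariant rational functions. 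Granting density of $R$, and hence $\dim Y = \dim X - d$, a generic fibre $F$ is irreducible of dimension $d$, and a $d$-dimensional orbit $H\acts x \subseteq F$ satisfies $\ol{H\acts x}=F$, so $H\acts x$ is open and dense in $F$ with lower-dimensional complement; density of $R$ then upgrades this to: $F$ is a single orbit outside a proper closed subset.

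\emph{Passing to a geometric quotient.} Finally I would excise the remaining debris: the bad locus $B$ of points whose orbit is not dense in its fibre is a proper $H$-invariant constructible subset, by Chevalley's theorem and upper semicontinuity of fibre dimension applied to $H\times X \to X\times_Y X$, and removing its saturation $\pi^{-1}(\pi(B))$ yields a dense open $H$-invariant $U$ on which every fibre of $p\definiere\pi|_U$ is exactly one orbit, giving condition~(1). After a further shrinking making $p$ a flat surjection onto the normal base $U/H \definiere p(U)$, the map $p$ is open, so $U/H$ carries the quotient topology (condition~(2)), and normality of the base ensures that invariant regular functions descend, giving $(p_*\O_U)^H = \O_{U/H}$ (condition~(3)); thus $p$ is a geometric quotient. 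The identity $\C(U/H)=\C(Y)=\C(X)^H$ holds by construction.
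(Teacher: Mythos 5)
You should first be aware that the paper does not prove this statement at all: it is quoted verbatim as Theorem~2 of Rosenlicht's 1956 paper and used as a black box, so your attempt has to be measured against the classical proof rather than anything in the text. Your skeleton is the standard modern one and most of its reductions are sound or routinely fixable: the reduction to $X$ irreducible and $H$ connected, finite generation of $L=\C(X)^H$ and a model $Y$ of it, the conjugate-root argument showing $L$ is algebraically closed in $\C(X)$ (hence generic fibres of $\pi$ irreducible in characteristic zero), shrinking to the open locus of orbits of maximal dimension $d$, and the final descent of invariant functions over a normal base after shrinking to a flat (or, better, smooth) locus. Indeed your conclusion can even be streamlined: once all orbits in a fibre have dimension $d$ and the fibre is irreducible of dimension $d$, every orbit is open and dense in the fibre, so any two of them meet and the fibre is a \emph{single} orbit; no separate excision of a bad locus is needed at that point.

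The genuine gap is exactly where you put the word ``crux'', and the one idea you offer there fails as stated. To prove $\trdeg_\C L\ge\dim X-d$ you propose ``constructing a rational section of the orbit map over a transversal to the generic orbits''. A rational section of the orbit map need not exist: take $H={\rm PGL}_n$ and for $X$ the total space of a nontrivial ${\rm PGL}_n$-torsor over a variety $Y$ (these exist since Brauer groups of complex function fields of dimension $\ge 2$ are nontrivial). The generic fibre is a torsor without $\C(Y)$-rational points, so there is no rational section; yet Rosenlicht's theorem holds there trivially, with $\pi$ the torsor map itself. So an argument that manufactures the invariants from a section through a transversal meeting each generic orbit once is circular --- such a transversal is essentially equivalent to what is being proved, and sometimes does not exist at all. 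What actually carries Rosenlicht's proof is a field-of-definition argument: the closure of the orbit of a generic point is defined over a subfield $F\subset\C(X)$ with $\trdeg_\C F=\dim X-d$, and in characteristic zero $F\subset\C(X)^H$. In modern dress this is the rational map $x\mapsto[\,\overline{H\acts x}\,]$ to a Chow variety: it is constant on orbits, it separates distinct top-dimensional orbits (two distinct $d$-dimensional orbits cannot share a closure, each being open and dense in it), and its image has dimension $\dim X-d$, whose function field pulls back into $\C(X)^H$. A generically finite multisection plus symmetric functions can be made to work instead, but that requires an actual incidence-variety dimension count, not the word ``transversal''. Since everything downstream in your argument --- density of the orbit relation $R$ in $X\times_Y X$, fibres being single orbits, and $\C(U/H)=\C(X)^H$ --- is conditional on this step, your proposal as written is a correct outline of the reductions surrounding Rosenlicht's theorem with its central content missing.
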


\subsection{Analytic Hilbert quotients and slice-type
stratification}\label{subsect:strat}

Let $G$ be a complex-reductive Lie group and $X$ a holomorphic $G$-space. A
complex space $Y$ together with a $G$-invariant surjective holomorphic map
$\pi\colon X \to Y$ is called an \emph{analytic Hilbert quotient} of $X$ by the
action of $G$ if
\begin{enumerate}
 \item $\pi$ is a locally Stein map, and
 \item $(\pi_*\mathscr{O}_X)^G = \mathscr{O}_Y$ holds.
\end{enumerate}
An analytic Hilbert quotient of a holomorphic $G$-space $X$ is unique up to
biholomorphism once it exists, and we will denote it by $X\hq G$. This is the
natural analogue of the concept of \emph{good quotient} or \emph{categorical
quotient} in Algebraic Geometry, cf.\ \cite[Ch.~3]{BBSurvey}. Moreover, if $X$
is an algebraic $G$-variety with a good quotient $\pi\colon X\to X \hq G$, then
the associated map $\pi^h\colon X^h \to (X\hq G)^h$ is an analytic Hilbert
quotient, see \cite{Lunaalgebraicanalytic}. If $X$ is a Stein $G$-space, then
the analytic Hilbert quotient $\pi\colon X \to X\hq G$ exists, see \cite{Snow},
\cite{HeinznerGIT}. It has the following properties, cf.~\cite{HMP}:
\begin{enumerate}
\item Given a $G$-invariant holomorphic map $\phi\colon X \to Z$ to a
complex space $Z$, there exists a unique holomorphic map $\widebar \phi\colon
X\hq G \to Z$ such that $\phi = \widebar \phi \circ \pi$.
\item For every Stein subspace $A$ of $X\hq G$ the inverse image $\pi^{-1}(A)$
is a Stein subspace of $X$.
\item If $A_1$ and $A_2$ are $G$-invariant analytic (in particular, closed)
subsets of $X$, then we have $\pi(A_1) \cap \pi(A_2) = \pi(A_1 \cap A_2)$.
\item For a $G$-invariant closed complex subspace $A$ of $X$, which is defined
by a $G$-invariant sheaf $\mathscr{I}_A$ of ideals, the image sheaf
$(\pi_*\mathscr{I}_A)^G$ endows the image $\pi(A)$ in $X\hq G$ with the
structure of a closed complex subspace of $X\hq G$. Moreover, the restriction
of $\pi$ to $A$ is an analytic Hilbert quotient for the action of $G$ on $A$.
\end{enumerate}

It follows that two points $x,x'\in X$ have the same image in $X\hq G$ if and
only if $\overline{G\acts x} \cap \overline{G\acts x'} \neq \emptyset$. For
each $q \in X\hq G$, the fibre $\pi^{-1}(q)$ contains a unique closed $G$-orbit
$G\acts x$. The orbit $G\acts x$ is affine (see \cite[Prop. 2.3 and 2.5]{Snow})
and hence, the stabiliser $G_x$ of $x$ in $G$ is a complex-reductive Lie group
by a result of Matsushima.

Let $G$ be a complex-reductive Lie group and let $X$ be a holomorphic
$G$-space with analytic Hilbert quotient. There exist two related important
stratifications of the quotient $X\hq G$. The main reference for these
stratifications in the algebraic case is \cite[Sect.~III.2]{LunaSlice}. In the
following  we are going to use the notion of \emph{orbit type} and of
\emph{slice type} as defined in \cite[Sect.~4]{HeinznerEinbettungen}: If $X$ is
a $G$-irreducible Stein $G$-space and $q \in X\hq G$, then there exists a
unique closed $G$-orbit $G\acts x$ in the fibre $\pi^{-1}(q)$. We define the
slice type of $q$ to be the type of the $G_x$-representation on the Zariski
tangent space $T_xX$, i.e., the isomorphism class of the $G$-vector bundle $G
\times_{G_x} (T_xX)$. Analogously, we denote by $\Type(G\acts x)$ the orbit
type of $x$, i.e., the conjugacy class $(G_x)$ in $G$ of the isotropy subgroup
$G_x$ of $G$ at $x$. Using the holomorphic slice theorem and the corresponding
results in the algebraic category one obtains the following result.

\begin{prop}\label{prop:Lunastrat}
Let $X$ be a $G$-irreducible holomorphic $G$-space with analytic Hilbert
quotient $\pi\colon X \to X\hq G$. The decomposition of $X\hq G$ according to
slice type defines a complex analytic stratification of the quotient $X\hq G$.
In particular, in $X\hq G$ there exists a maximal, Zariski-open stratum
$S_\mathrm{max}$ of the slice-type stratification. The orbit-type of closed
orbits is constant on this stratum. Furthermore, the restriction of $\pi$ to
$X_{\mathrm{max}} := \pi^{-1}(S_\mathrm{max})$ realises
$X_{\mathrm{max}}$ as a holomorphic fibre bundle over $S_\mathrm{max}$.
\end{prop}

\subsection{(Weakly) meromorphic maps and functions}

Recall from~\cite{RemmertAbbildungen} and~\cite[sect.~4.6]{Fischer} that there
is a natural correspondence between meromorphic functions on a
(pure-dimensional) complex space $X$ and so-called meromorphic graphs, i.e.,
graphs of meromorphic maps from $X$ to $\P_1=\C\cup \{\infty\}$ that do not map
any irreducible component of $X$ to $\infty$. For a meromorphic function $f$ on
$X$ we denote by $P_f$ the \emph{pole variety} of $f$. It is a nowhere dense
analytic subset of $X$, and the smallest subset of $X$ such that $f$ is
holomorphic on $X \setminus P_f$. We set $\text{dom}(f) := X \setminus
P_f$ and we call it the \emph{domain of definition} of $f$.

Suppose now that a complex Lie group $L$ acts on a complex space $X$. Then we
have an induced action of $L$ on the algebra $\mathscr{M}_X(X)$ of meromorphic
functions as follows. Let $f$ be a meromorphic  function on $X$ with graph
$\Gamma_f\subset X\times\mbb{P}_1$. The group $L$ acts on $X\times\P_1$ by the
$L$-action on the first factor. Given $g \in L$, we define a new meromorphic
graph $\Gamma_{g\acts f}:=g\acts \Gamma_f \subset X \times \P_1$ and hence a
meromorphic function $g\acts f$ on $X$. In this way we obtain an action of $L$
on $\mathscr{M}_X(X)$ by algebra homomorphisms. A meromorphic function $f \in
\mathscr{M}_X(X)$ is $L$-invariant if and only if its graph $\Gamma_f$ is an
$L$-invariant analytic subset of $X \times \P_1$. In this case the pole variety
of $f$ is an $L$-invariant analytic subset of $X$.

The following definition is taken from~\cite{StollMeromorpheAbbildungenI} and
\cite{StollMeromorpheAbbildungenII}. It is useful when considering maps into
(non-compactifiable) non-compact target spaces, as we will do in the following.

\begin{defi}\label{defi:meromorphic}
Let $X$ be a complex space and let $A$ be a nowhere dense analytic subset of
$X$. Let $Y$ be a complex space. Then, a holomorphic map $\phi\colon X\setminus
A \to Y$ is called \emph{weakly meromorphic}, if for any point $p_0 \in A$ and
any one-dimensional complex submanifold $C$ of $X$ with $C \cap A = \ol C
\cap A = \{p_0\}$ there exists at most one point $q_0 \in Y$ with the following
property: there exists a sequence  $(p_n)_{n\in \N} \subset C \setminus A$ with
$\lim_{n \to \infty } p_n = p_0$ such that $q_0$ is the accumulation point of
$\bigl(\phi(p_n)\bigr)_{n \in \N}$.
\end{defi}

\begin{ex}
A meromorphic map is in particular weakly meromorphic, see~\cite[Satz
3.3]{StollMeromorpheAbbildungenI}.
\end{ex}

\section{Applications}

In the following sections we give applications of the Main Theorem to almost-homogeneous spaces (Section~\ref{subsect:almosthomogeneous}), to the problem of
realising meromorphic invariants as quotients of holomorphic invariants
(Section~\ref{subsect:quotientinvariants}), to holomorphically convex spaces
with actions of compact groups (Section~\ref{subsect:holconvex}), to the
existence of principal orbits types (Section~\ref{subsect:Richardson}), and to
actions of unipotent groups (Section~\ref{subsect:unipotent}).

\subsection{Characterising almost-homogeneous
spaces}\label{subsect:almosthomogeneous}

Using Rosenlicht's Theorem, we see that an algebraic variety is
almost-homogeneous for a group $G$ acting on $X$ if and only if every
$G$-invariant rational function on $X$ is constant. As a corollary of our
main result we obtain the corresponding result in the complex-analytic
category:

\begin{prop}\label{prop:Hopenorbit}
Let $X$ be a Stein $G$-space and let $H$ be an algebraic subgroup of $G$. Then
$H$ has an open orbit in $X$ if and only if $\mathscr{M}_X(X)^H = \C$.
\end{prop}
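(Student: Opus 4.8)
The plan is to deduce this directly from the Main Theorem, which applies precisely because $H$ is an algebraic subgroup of the complex-reductive group $G$ acting on the Stein $G$-space $X$. The statement is an equivalence, so I would prove the two implications separately, with the Main Theorem providing the machinery for the harder direction.

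First I would treat the easy implication: if $H$ has an open orbit $H \acts x_0$ in $X$, then every $H$-invariant meromorphic function is constant. Since $X$ is irreducible (or after restricting to a $G$-irreducible component), the open orbit $H \acts x_0$ is dense, and an $H$-invariant meromorphic function $f$ is constant on $H \acts x_0$ away from its pole variety $P_f$. Because $P_f$ is a nowhere dense analytic subset while $H \acts x_0$ is open and dense, $f$ takes a single value on a nonempty open set, hence is constant by the identity theorem for meromorphic functions on an irreducible space. Thus $\mathscr{M}_X(X)^H = \C$.

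For the converse, suppose $\mathscr{M}_X(X)^H = \C$ and apply the Main Theorem to obtain the $H$-invariant Zariski-open dense subset $\Omega$ together with the geometric quotient $p\colon \Omega \to Q$ onto a Stein space $Q$. By part (5) of the Main Theorem, every $f \in \mathscr{M}_X(X)^H$ descends to a meromorphic function $\bar f \in \mathscr{M}_Q(Q)$ with $f|_\Omega = \bar f \circ p$; by part (6), the invariant meromorphic functions separate the $H$-orbits in $\Omega$. The hypothesis $\mathscr{M}_X(X)^H = \C$ therefore forces the separating family on $\Omega$ to consist only of constants, so there can be at most one $H$-orbit in $\Omega$. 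Hence $\Omega$ is a single orbit $H \acts x_0$. Since $p$ realises $\Omega$ as a topological fibre bundle over $Q$ (part (3)) with $Q$ now a point, and since $\Omega$ is Zariski-open and dense in $X$, the orbit $H \acts x_0 = \Omega$ is open in $X$, as desired.

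I expect the main subtlety to lie in the converse direction, specifically in justifying that the separation property (6) combined with $\mathscr{M}_X(X)^H = \C$ genuinely forces $\Omega$ to be a single orbit rather than merely collapsing the invariant functions. One must be careful that part (6) separates orbits using \emph{meromorphic} invariants on $X$ (not just functions pulled back through $p$), and check that no nonconstant separating function can survive when $\mathscr{M}_X(X)^H = \C$; this is where the exact formulation of the Main Theorem is essential. A further technical point is the reduction to the $G$-irreducible (equivalently, $H$-irreducible after passing to the relevant component) case so that the identity-theorem argument in the easy direction is valid; since the open-orbit condition and the triviality of $\mathscr{M}_X(X)^H$ both interact with the irreducible-component decomposition, I would make this reduction explicit at the outset.
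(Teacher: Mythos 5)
Your argument is correct and matches the paper's intended derivation: the proposition is stated as a direct corollary of the Main Theorem, and your converse direction --- part (6) says invariant meromorphic functions separate $H$-orbits in the Zariski-open dense set $\Omega$, so $\mathscr{M}_X(X)^H=\C$ forces $\Omega$ to be a single, necessarily open, orbit --- together with the identity-theorem argument for the easy direction (with the $H$-irreducibility reduction you flag) is exactly that derivation. The only proof-related text the paper itself records is a remark sketching an alternative route avoiding the Main Theorem: $\mathscr{M}_X(X)^H=\C$ gives $\mathscr{O}_X(X)^G=\C$, Snow's result then endows $X$ with the structure of an affine algebraic $G$-variety, and Rosenlicht's Theorem yields almost-homogeneity.
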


\begin{rem}
Note that this can also be proven without using the Main Theorem: in case
$\mathscr{M}_X(X)^H=\C$, clearly also $\mathscr{O}_X(X)^G=\C$. A result of Snow
\cite[Cor.~5.6]{Snow} implies that $X$ naturally carries the structure of an
affine algebraic $G$-variety. Consequently, $X$ is almost-homogeneous by
Rosenlicht's Theorem, see Theorem~\ref{thm:Rosenlicht}.
\end{rem}

\subsection{The connection between holomorphic and meromorphic invariants}\label{subsect:quotientinvariants}

Let $X$ be a $G$-irre\-du\-ci\-ble Stein $G$-space with analytic Hilbert quotient
$\pi\colon X \to X\hq G$. Then clearly the field of invariant meromorphic
functions $\mathscr{M}_X(X)^G$ contains the field $\mathscr{M}_{X\hq G}(X\hq
G)$ of meromorphic functions on $X \hq G$ via the pull-back morphism
\begin{equation}\label{eq:pullback}
\pi^*\colon  \mathscr{M}_{X\hq G}(X\hq G) \hookrightarrow \mathscr{M}_X(X)^G.
\end{equation}
Using our main result, in this section we describe the image of $\pi^*$ in
$\mathscr{M}_X(X)^G$ and we characterise those spaces for which $\pi^*$ is an
isomorphism. Furthermore, we give examples why in general this cannot be
expected. It follows that in most situations the information encoded in the
Rosenlich-type quotient constructed in the Main Theorem cannot be recovered
from the analytic Hilbert quotient $X\hq G$.

The following result characterises the image of $\pi^*$ in $\mathscr{M}_X(X)^G$.

\begin{prop}
Let $X$ be a $G$-irreducible Stein $G$-space with analytic Hilbert quotient
$\pi\colon X \to X\hq G$. A function $f \in \mathscr{M}_X(X)^G$ is contained in
$\Image(\pi^*)$ if and only if there exist $p,q \in \mathscr{O}_X(X)^G$, $q\neq
0$, such that $f = p/q$.
\end{prop}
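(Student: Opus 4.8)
The plan is to reduce the statement to two standard facts about the analytic Hilbert quotient $\pi\colon X\to X\hq G$: its universal property for invariant holomorphic functions, and the fact that $X\hq G$ is again an irreducible Stein space. I would begin by recording that $X\hq G$ is Stein (\cite{Snow}, \cite{HeinznerGIT}) and irreducible: since $\pi$ is surjective and $G$-invariant while $G$ permutes the irreducible components of the $G$-irreducible space $X$ transitively, each component has the same image under $\pi$, so $X\hq G=\pi(X)$ coincides with the image of a single irreducible component and is therefore irreducible. In particular $\mathscr{O}_{X\hq G}(X\hq G)$ is an integral domain and $\mathscr{M}_{X\hq G}(X\hq G)$ is a field, in which $\pi^*$ lands by the embedding \eqref{eq:pullback}.

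For the sufficiency, suppose $f=p/q$ with $p,q\in\mathscr{O}_X(X)^G$ and $q\neq 0$. By the universal property of the analytic Hilbert quotient (item~(1) in the list of its properties) the invariant holomorphic functions $p$ and $q$ descend uniquely to $\bar p,\bar q\in\mathscr{O}_{X\hq G}(X\hq G)$ with $p=\bar p\circ\pi$ and $q=\bar q\circ\pi$. As $\pi$ is surjective, $q\neq 0$ forces $\bar q\neq 0$, so $\bar f\definiere\bar p/\bar q$ is a well-defined element of $\mathscr{M}_{X\hq G}(X\hq G)$. Since $\pi^*$ is a homomorphism of fields, $\pi^*\bar f=\pi^*\bar p/\pi^*\bar q=p/q=f$, whence $f\in\Image(\pi^*)$.

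For the necessity, suppose $f=\pi^*\bar f$ for some $\bar f\in\mathscr{M}_{X\hq G}(X\hq G)$. Here I would invoke the classical fact that on a reduced irreducible Stein space every global meromorphic function is the quotient of two global holomorphic functions, so that $\bar f=\bar p/\bar q$ with $\bar p,\bar q\in\mathscr{O}_{X\hq G}(X\hq G)$ and $\bar q\neq 0$. Setting $p\definiere\pi^*\bar p$ and $q\definiere\pi^*\bar q$ yields holomorphic functions on $X$ that are $G$-invariant, being pullbacks along the invariant map $\pi$; moreover $q\neq 0$ because $\bar q\neq 0$ and $\pi$ is surjective. Again using that $\pi^*$ respects the field structure, $f=\pi^*\bar f=p/q$ with $p,q\in\mathscr{O}_X(X)^G$, as required.

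I do not expect a genuine obstacle here, as both implications are formal once the two inputs are in place. The only point that needs care is the appeal in the necessity direction to the representation of meromorphic functions on Stein spaces as quotients of holomorphic ones; this is where the Stein (rather than merely complex-analytic) nature of $X\hq G$ is essential, and I would make sure to cite it precisely and to confirm reducedness and irreducibility of the quotient, so that the quotient-field description of $\mathscr{M}_{X\hq G}(X\hq G)$ indeed applies.
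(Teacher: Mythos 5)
Your proposal is correct and follows essentially the same route as the paper: the universal property of the analytic Hilbert quotient handles the direction $f=p/q \Rightarrow f\in\Image(\pi^*)$, and the solvability of the Poincar\'e problem on the Stein space $X\hq G$ (the paper cites \cite[Ch.~4, \S2, Thm.~4]{TheorySteinSpacesAlt}) handles the converse. Your additional checks --- irreducibility of $X\hq G$, surjectivity of $\pi$ forcing $\bar q\neq 0$ and $q\neq 0$ --- are sound and merely make explicit what the paper leaves implicit.
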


\begin{proof}
If $f = p/q$ for $p,q \in\mathscr{O}_X(X)^G$, $q \not \equiv 0$, then by the
universal properties of the analytic Hilbert quotient there exist holomorphic
functions $\bar p$ and $\bar q$ on $X\hq G$ such $p = \pi^*\bar p$ and $q =
\pi^*\bar q$, respectively. Consequently, $f$ is the pull-back of the
meromorphic function $\bar p / \bar q \in \mathscr{M}_{X\hq G}(X\hq G)$.
Conversely, assume that $f= \pi^* \bar f$ for some $\bar f\in \mathscr{O}_{X\hq
G}(X\hq G)$. Since $X\hq G$ is Stein, the Poincar\'e problem on $X\hq G$ is
universally solvable, see for example \cite[Ch.~4, \S2,
Thm.~4]{TheorySteinSpacesAlt}; i.e., there exist holomorphic functions $\bar p,
\bar q \in \mathscr{O}_{X\hq G}(X\hq G)$, $q \not \equiv 0 $, such that $\bar f
= \bar p / \bar q$. Then, $f = \pi^* \bar p / \pi^* \bar q$ is a quotient of
holomorphic invariants.
\end{proof}

The following example shows that the inclusion~\eqref{eq:pullback} is
strict in general.

\begin{ex}\label{ex:CstarOnCtwo}
Consider the action of $\C^*$ on $\C^2$ by scalar multiplication. Then, the
meromorphic function $f(z,w) = z/w$ is $\C^*$-invariant. However, the analytic
Hilbert quotient is $\pi\colon \C^2 \to\{\text{point}\}$, so that $f$ is not
the pull-back of a meromorphic function via $\pi$. In order to construct an
example for a semisimple group action from this one, let $T<G={\rm{SL}}_2(\C)$
be the maximal torus of diagonal matrices. Consider the diagonal action of $T$
on the product $G \times\C^2$, where $T\cong \C^*$ acts on $\C^2$ by scalar
multiplication as above. The group $G$ acts holomorphically on the quotient $X
:= G\times_T \C^2$ by this $T$-action. By construction we have $X \hq G
\cong \C^2\hq T =\{\text{point}\}$. Furthermore, the $G$-invariant rational
function on $X$ defined by $f\bigl[g,(z,w)\bigr] = z/w$ is not a pull-back via
the quotient map $X \to \{\text{point}\}$.
\end{ex}

The following result characterises those Stein $G$-spaces for which any
meromorphic invariant is a quotient of two holomorphic invariants, i.e., those
spaces for which $\pi^*$ is an isomorphism.

\begin{thm}\label{thm:aHquniversal}
Let $X$ be a $G$-irreducible Stein $G$-space with analytic Hilbert quotient
$\pi\colon  X \to X\hq G$. Then, the following are equivalent:
\begin{enumerate}
\item[a)]There exists a non-empty open subset $U \subset X\hq G$ such that for
all $q \in U$ the fibre $\pi^{-1}(q)$ contains a dense $G$-orbit.
\item[b)]There exists a non-empty Zariski-open subset $U \subset X\hq G$ such
that for all $q \in U$ the fibre $\pi^{-1}(q)$ contains a dense $G$-orbit.
\item[c)]The pull-back map $\pi^*$ establishes an isomorphism between
$\mathscr{M}_{X\hq G}(X\hq G)$ and $\mathscr{M}_X(X)^G$.
\end{enumerate}
\end{thm}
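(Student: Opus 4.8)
The plan is to compare the analytic Hilbert quotient $\pi\colon X\to X\hq G$ with the Rosenlicht-type quotient produced by the Main Theorem applied to $H=G$. Write $p\colon\Omega\to Q$ for the latter, where $\Omega\subset X$ is a $G$-invariant Zariski-open dense subset. Since $\pi$ is $G$-invariant and $X\hq G$ is a categorical analytic Hilbert quotient, the restriction $\pi|_\Omega$ factors through $p$, yielding a dominant holomorphic map $\bar\pi\colon Q\to X\hq G$ with $\pi|_\Omega=\bar\pi\circ p$. Because $\Omega$ is Zariski-dense in the irreducible space $X$, restriction identifies $\mathscr{M}_X(X)^G$ with $\mathscr{M}_\Omega(\Omega)^G$, and parts (1) and (5) of the Main Theorem show that $p^*\colon\mathscr{M}_Q(Q)\to\mathscr{M}_X(X)^G$ is an isomorphism. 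From $\pi|_\Omega=\bar\pi\circ p$ one obtains the factorisation $\pi^*=p^*\circ\bar\pi^*$; hence $\pi^*$ is an isomorphism if and only if $\bar\pi^*$ is. This reduces the statement to generic properties of the single map $\bar\pi$.

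The bookkeeping linking these properties to the fibres of $\pi$ is the following: a point of $Q$ lying over $q\in X\hq G$ is precisely a $G$-orbit contained in $\Omega\cap\pi^{-1}(q)$, so $\bar\pi^{-1}(q)$ is in bijection with the set of such orbits. Thus $\bar\pi$ is generically injective exactly when the generic fibre of $\pi$ meets $\Omega$ in a single orbit. I will also record that $X\setminus\Omega$ is a $G$-invariant proper analytic subset, so that by property~(4) of the analytic Hilbert quotient together with $\dim(X\setminus\Omega)<\dim X$ its image $\pi(X\setminus\Omega)$ is a proper analytic subset of $X\hq G$; consequently $\pi^{-1}(q)\subset\Omega$ for all $q$ outside this set.

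The implication $b)\Rightarrow a)$ is immediate. For $a)\Rightarrow c)$ I argue directly: given $f\in\mathscr{M}_X(X)^G$ and $q\in U$, the function $f$ is $G$-invariant, hence constant on the dense orbit in $\pi^{-1}(q)$ and therefore, by continuity on its domain of definition, constant on all of $\pi^{-1}(q)$. Thus $f$ is constant on the generic fibre of $\pi$, so its meromorphic graph descends to a meromorphic function $\bar f$ on $X\hq G$ with $f=\pi^*\bar f$; this shows that $\pi^*$ is surjective, and it is always injective. For $c)\Rightarrow b)$ I use the reduction above: if $\pi^*$ is an isomorphism, then so is $\bar\pi^*$, whence $\bar\pi$ is a dominant map between irreducible spaces of equal dimension, i.e.\ generically finite of degree $1$. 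By upper semicontinuity of fibre cardinality over the Zariski-open locus where $\bar\pi$ is finite, there is a Zariski-open dense $U_1\subset X\hq G$ with $\#\bar\pi^{-1}(q)=1$; removing the proper analytic set $\pi(X\setminus\Omega)$ produces a Zariski-open $U$ over which every fibre $\pi^{-1}(q)$ is a single $G$-orbit, which in particular contains a dense orbit.

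The main obstacle is the bridge between the function-field statement c) and the geometric statements a), b): one must make precise that $\mathscr{M}_X(X)^G$ \emph{is} the field of meromorphic functions on the Rosenlicht quotient $Q$, and that the induced map $\bar\pi$ faithfully records the degeneration of $G$-orbits inside the fibres of $\pi$. The two points demanding care are the rigorous descent of a fibrewise-constant invariant meromorphic function in $a)\Rightarrow c)$, and the identification of $\deg\bar\pi$ with the generic number of orbits in a $\pi$-fibre, together with the resulting Zariski-openness, in $c)\Rightarrow b)$; both hinge on controlling the behaviour away from the proper analytic set $\pi(X\setminus\Omega)$.
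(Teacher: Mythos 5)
Your reduction to the single map $\bar\pi\colon Q\to X\hq G$ rests on the claim that restriction identifies $\mathscr{M}_X(X)^G$ with $\mathscr{M}_\Omega(\Omega)^G$, and this is false: $X\setminus\Omega$ may have codimension-one components, and meromorphic functions do not extend across those. Concretely, for $\C^*$ acting on $X=\C^2$ by scalar multiplication one may take $\Omega=\{w\neq 0\}$ with quotient $p(z,w)=z/w$; then $e^{z/w}\in\mathscr{M}_\Omega(\Omega)^{\C^*}$ has an essential singularity along $\{w=0\}$ and is not the restriction of any invariant meromorphic function on $X$. Part (5) of the Main Theorem only provides an \emph{injection} $\mathscr{M}_X(X)^G\hookrightarrow\mathscr{M}_Q(Q)$, not an isomorphism of $\mathscr{M}_Q(Q)$ with $\mathscr{M}_X(X)^G$, so ``$\pi^*$ iso $\Leftrightarrow$ $\bar\pi^*$ iso'' does not follow, and with it the degree-$1$ argument for c) $\Rightarrow$ b) collapses. (Even granting it, ``function-field isomorphism implies generic degree one plus semicontinuity of fibre cardinality'' is an argument from the algebraic category; $\bar\pi$ is merely holomorphic, not proper or finite, and meromorphic function fields of Stein spaces do not control degrees this way.) Your preliminary ``record'' is also false: a $G$-invariant analytic set of dimension $<\dim X$ can have image \emph{all} of $X\hq G$. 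For $\C^*$ acting on $\C^2$ by $t\acts(z,u)=(tz,u)$ one has $X\hq G=\C$, $\pi(z,u)=u$, and $\Omega=\{z\neq0\}$ is a legitimate Rosenlicht set whose complement $\{z=0\}$ maps \emph{onto} $X\hq G$; so you cannot arrange $\pi^{-1}(q)\subset\Omega$ generically. Note that in this example c) holds while \emph{no} fibre is a single orbit (each is a dense orbit plus a fixed point), so the conclusion of your c) $\Rightarrow$ b) endgame is not merely unproved but wrong: b) asks only for a \emph{dense} orbit, and the correct statement is that $\Omega\cap\pi^{-1}(q)$ is a single orbit which is Zariski-open and dense in $\pi^{-1}(q)$ for generic $q$ --- a genericity statement the paper extracts from the slice-type stratification, not from properness of $\pi(X\setminus\Omega)$.

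The second gap is your direct argument a) $\Rightarrow$ c): ``$f$ is constant on generic fibres, so its graph descends'' is precisely the hard analytic content of the theorem, not a one-line consequence. First, a) gives only a classically open $U$, so fibrewise constancy holds a priori only over a non-Zariski set; the paper therefore proves a) $\Rightarrow$ b) first, using that $\pi|_{X_{\mathrm{max}}}$ is a holomorphic fibre bundle over the maximal slice-type stratum (Proposition~\ref{prop:Lunastrat}), so that density of an orbit in one fibre propagates to all fibres over $S_{\mathrm{max}}$. Second, the actual descent in b) $\Rightarrow$ c) requires real work: a local section $\sigma$ of $\pi|_{X_{\mathrm{max}}}$ producing a biholomorphism $p(\widetilde U)\cong U$, pushing the graph $\Gamma_f$ down by the geometric quotient $P=p\times\id_{\P_1}$, a closure argument identifying $\Pi(\Gamma')$ with an analytic meromorphic graph over $U$, and finally Proposition~5.2 of \cite{KaehlerQuotientsGIT} applied to $\pi\times\id_{\P_1}$ after showing $\pi(P_f)$ is nowhere dense in $X\hq G$. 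None of this is replaced by continuity considerations. For c) $\Rightarrow$ b), the paper's route avoids all degree theory and is the one you should adopt: if generic fibres over $S_{\mathrm{max}}$ contained no dense orbit, then for generic $q$ the (Zariski-open, dense) intersection $\Omega\cap\pi^{-1}(q)$ would contain two distinct orbits, which by part (6) of the Main Theorem are separated by a \emph{global} $f\in\mathscr{M}_X(X)^G$; such an $f$ cannot lie in $\Image(\pi^*)$, since pull-backs are constant on $\pi$-fibres --- contradiction.
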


Before we prove Theorem~\ref{thm:aHquniversal} we list a few typical situations
where it can be applied.

\begin{cor}
Let $X$ be a $G$-irreducible Stein $G$-space with analytic Hilbert quotient
$\pi\colon X \to X\hq G$. Assume that there exists a point $x \in X$ such that
$\pi^{-1}\bigl(\pi(x)\bigr) = G \acts x$. Then, the pull-back $\pi^*$
establishes an isomorphism between $\mathscr{M}_{X\hq G}(X\hq G)$
and $\mathscr{M}_X(X)^G$.
\end{cor}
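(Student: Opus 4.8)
The plan is to deduce the statement from Theorem~\ref{thm:aHquniversal} by verifying its condition~(b): I will show that the single-orbit hypothesis forces the generic fibre of $\pi$ to be almost homogeneous, so that every fibre over a suitable Zariski-open subset of $X\hq G$ contains a dense $G$-orbit. The conclusion that $\pi^*$ is an isomorphism is then exactly statement~(c) of that theorem.

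First I would record that the fibre $\pi^{-1}\bigl(\pi(x)\bigr) = G\acts x$ is closed in $X$, being a fibre of $\pi$, so that $G\acts x$ is the unique closed orbit in this fibre. The heart of the argument is a dimension count. Write $d := \dim X - \dim(X\hq G)$ for the generic fibre dimension and $k := \max_{y\in X}\dim(G\acts y)$ for the maximal orbit dimension. By upper semicontinuity of fibre dimension every fibre has dimension at least the generic value $d$, and since the fibre over $\pi(x)$ is a single orbit this yields $d \le \dim(G\acts x) \le k$. Conversely, choosing a point $y$ in the intersection of the Zariski-dense open set $X_{\mathrm{reg}} := \{y : \dim(G\acts y) = k\}$ with the (also dense open) preimage of the set of regular values of $\pi$, the inclusion $G\acts y\subseteq\pi^{-1}\bigl(\pi(y)\bigr)$ gives $k\le d$. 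Chaining these forces $d = k$ and $\dim(G\acts x)=k$; in particular the generic fibre of $\pi$ has the dimension of a maximal orbit.

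Next I would invoke the slice-type stratification from Proposition~\ref{prop:Lunastrat}. On the maximal stratum $S_{\mathrm{max}}$ the restriction $\pi\colon X_{\mathrm{max}}\to S_{\mathrm{max}}$ is a holomorphic fibre bundle, and constancy of the slice type means that the $G$-isomorphism type of the fibre is constant over $S_{\mathrm{max}}$. Since $X_{\mathrm{max}}$ and $X_{\mathrm{reg}}$ are both Zariski-open and dense, their intersection is non-empty, so some fibre over $S_{\mathrm{max}}$ contains an orbit of dimension $k = d$; being of full fibre dimension, this orbit is open in the fibre, and by constancy of the fibre type the same holds for every fibre over $S_{\mathrm{max}}$. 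Finally, as $X$ is $G$-irreducible the generic fibre is $G$-irreducible as well, so an open $G$-orbit $O$ meets one irreducible component in a dense subset, whence that component lies in $\ol{O}$; since $\ol{O}$ is $G$-invariant and $G$ permutes the components transitively, $\ol{O}$ is the whole fibre, i.e.\ $O$ is dense. Thus every fibre over the non-empty Zariski-open set $U := S_{\mathrm{max}}$ contains a dense $G$-orbit, which is condition~(b); Theorem~\ref{thm:aHquniversal} then yields~(c), so $\pi^*$ is an isomorphism.

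The step I expect to be the main obstacle is the passage from the single special fibre to an open family of almost-homogeneous fibres: the hypothesis controls only one point of the quotient, and converting this into the generic condition~(b) needs both the dimension identity $d=k$ and the constancy of the fibre type on the maximal stratum. A secondary technical point, which I would treat carefully, is the implication ``open orbit $\Rightarrow$ dense orbit'' in a possibly reducible fibre; this is precisely where $G$-irreducibility of the generic fibre is used.
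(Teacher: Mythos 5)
Your reduction to condition~(b) of Theorem~\ref{thm:aHquniversal} and the dimension identity $d=k$ are sound, but the final step contains a genuine gap: the claim that ``as $X$ is $G$-irreducible the generic fibre is $G$-irreducible as well'' is false. Consider $X=\{(u,z,w)\in\C^3 \mid z^2=uw^2\}$ with $G=\C^*$ acting with weights $(0,1,1)$. Then $X$ is an irreducible affine (hence Stein) $G$-variety, $\C[X]^G=\C[u]$, so $\pi\colon X\to X\hq G\cong\C_u$, and for $u_0\neq 0$ the fibre is the union of the two lines $\{z=\sqrt{u_0}\,w\}$ and $\{z=-\sqrt{u_0}\,w\}$; since $G$ is connected, each component is $G$-invariant, so the generic fibre is \emph{not} $G$-irreducible (the components are interchanged only by the monodromy $u\mapsto e^{2\pi i}u$, which is precisely what makes $X$ irreducible). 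In this example every generic fibre contains two open orbits of full fibre dimension and no dense orbit; so ``open orbit of dimension $d$ in each fibre over $S_{\mathrm{max}}$'' --- which is all that your dimension count plus the bundle structure delivers --- does not imply condition~(b). The example also shows that irreducibility cannot be transferred from the special single-orbit fibre to nearby fibres: there the fibre over $u=0$ is a single line (irreducible), while the generic fibre has two components, i.e.\ the number of components can jump \emph{up} under generisation.

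This example does not satisfy the corollary's hypothesis (no fibre is a single orbit), so the statement itself is untouched; but it shows your passage from the one special fibre to the generic ones breaks exactly at the density step. The paper instead invokes the stronger, standard openness statement: if one fibre equals a single $G$-orbit, then the fibres over a non-empty Zariski-open subset of $X\hq G$ each consist of a single $G$-orbit (Zariski-openness of the ``stable'' locus), after which condition~(b) of Theorem~\ref{thm:aHquniversal} holds trivially. A repair in the spirit of your argument would be to control the \emph{closed} orbits rather than components: closed orbits can only degenerate under specialisation, so the closed orbit in the generic fibre has dimension at least $\dim(G\acts x)=d$, hence exactly $d$; but then no fibre over $S_{\mathrm{max}}$ can contain a non-closed orbit (the closure of such an orbit would have to contain the $d$-dimensional closed orbit in its lower-dimensional boundary), forcing each such fibre to be a single closed orbit. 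As written, however, your proof has a hole at the claimed $G$-irreducibility of the generic fibres, and the dimension identity $d=k$ alone cannot exclude several open orbits per fibre.
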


\begin{proof}
Under the hypotheses of the corollary, there exists a non-empty Zariski-open
subset $U \subset X\hq G$ such that for all $q \in U$ the fibre $\pi^{-1}(q)$
consists of a single $G$-orbit. Hence, Theorem~\ref{thm:aHquniversal} applies.
\end{proof}

\begin{cor}\label{cor:semisimple}
Let $G$ be a semisimple algebraic group and let $X$ be a $G$-irreducible
affine algebraic $G$-variety with factorial coordinate ring. Then,
$\mathscr{M}_{X\hq G}(X\hq G)$ and  $\mathscr{M}_X(X)^G$ are isomorphic via
$\pi^*$.
\end{cor}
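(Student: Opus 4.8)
The plan is to deduce the corollary from Theorem~\ref{thm:aHquniversal} by verifying its condition~(b). Since $\C[X]$ is factorial it is in particular an integral domain, so $X$ is irreducible; as $G$ is semisimple (and hence connected) the invariant ring $\C[X]^G$ defines the algebraic good quotient $X \hq G = \Spec\bigl(\C[X]^G\bigr)$, whose analytification $(X\hq G)^h$ is the analytic Hilbert quotient of the Stein $G$-space $X^h$, see the discussion in Section~2. Consequently it suffices to exhibit a Zariski-open dense subset $U$ of $X \hq G$ over which every fibre of $\pi$ contains a dense $G$-orbit; its analytification is then Zariski-open in $(X\hq G)^h$ and verifies condition~(b).

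The key algebraic input is the equality $\C(X)^G = \Quot\bigl(\C[X]^G\bigr)$, that is, every invariant rational function is a quotient of two invariant regular functions. To prove it, I would take $f \in \C(X)^G$ and, using that $\C[X]$ is a unique factorisation domain, write $f = a/b$ with $a,b \in \C[X]$ coprime. Invariance gives $(g\acts a)\, b = (g \acts b)\, a$ for all $g \in G$, and coprimeness forces $a$ and $g\acts a$ to be associates, say $g \acts a = \chi(g)\, a$ with $\chi(g) \in \C[X]^*$. Since $G$ is connected, the image of $g \mapsto \chi(g)$ in the discrete group $\C[X]^*/\C^*$ (finitely generated by Rosenlicht's unit theorem) is constant, whence $\chi(g) \in \C^*$ and $\chi\colon G \to \C^*$ is a character. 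As $G$ is semisimple it has no nontrivial characters, so $g \acts a = a$, and likewise $g\acts b = b$. Thus $a,b \in \C[X]^G$ and $f \in \Quot\bigl(\C[X]^G\bigr)$.

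It remains to pass from this equality of fields to the geometric statement about generic fibres. Applying Rosenlicht's Theorem~\ref{thm:Rosenlicht} to $X$ yields a $G$-invariant Zariski-open dense subset $X_0 \subseteq X$ carrying a geometric quotient $q_0\colon X_0 \to X_0/G$ with $\C(X_0/G) = \C(X)^G$. Combined with the previous step this gives $\C(X_0/G) = \Quot\bigl(\C[X]^G\bigr) = \C(X\hq G)$, so the two quotients are birationally equivalent and $\pi|_{X_0}$ factors generically through $q_0$. As the fibres of the geometric quotient $q_0$ are single $G$-orbits, over a suitable Zariski-open dense $U \subseteq X \hq G$ the intersection $\pi^{-1}(q) \cap X_0$ is a single orbit; since $X_0$ is dense and $G$-invariant, this orbit is dense in the whole fibre $\pi^{-1}(q)$. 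This establishes condition~(b), and Theorem~\ref{thm:aHquniversal} then yields that $\pi^*$ is an isomorphism.

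The routine steps above are standard; the main obstacle is the second paragraph, namely controlling the unit $\chi(g)$ so that the semi-invariance of $a$ is governed by a genuine character of $G$ (this is exactly where both factoriality and connectedness enter), together with the care needed in the third paragraph to guarantee that the orbit filling $\pi^{-1}(q)\cap X_0$ is dense in the entire fibre and that the relevant good locus $U$ can be taken Zariski-open.
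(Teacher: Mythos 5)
Your proposal is correct, and its skeleton is the same as the paper's: both arguments reduce the corollary to the dense-generic-orbit criterion of Theorem~\ref{thm:aHquniversal}, using that $\pi^h\colon X^h\to\bigl(\Spec(\C[X]^G)\bigr)^h$ is the analytic Hilbert quotient of the Stein space $X^h$. The difference is that the paper disposes of the key input --- that the generic fibre of $\pi\colon X\to\Spec\bigl(\C[X]^G\bigr)$ contains a dense $G$-orbit --- with a single citation to \cite[\S 3.2]{PopovVinberg}, whereas you reprove it from scratch: first the field identity $\C(X)^G=\Quot\bigl(\C[X]^G\bigr)$ via coprime fractions in the UFD $\C[X]$, semi-invariance, Rosenlicht's unit theorem and the absence of characters on a semisimple group; then the passage from fields to geometry via Rosenlicht's Theorem~\ref{thm:Rosenlicht} and birationality of the induced morphism $X_0/G\to X\hq G$. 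This buys self-containedness, at the cost of two steps you compress and should expand. First, constancy of $[\chi(g)]$ in $\C[X]^*/\C^*$ is not literally ``connected group into discrete group'': a priori $\chi$ is only a cocycle, $\chi(gh)=(g\acts\chi(h))\,\chi(g)$, and one needs an algebraicity argument --- e.g.\ rationality of the $G$-action gives a finite-dimensional $G$-stable subspace containing $a$, so each set $\{g\in G \mid [\chi(g)]=[u]\}$ is Zariski-closed, and finite generation of $\C[X]^*/\C^*$ then exhibits $G$ as a countable disjoint union of closed sets, forcing a single class and hence $\chi(g)\in\C^*$, after which $\chi$ is an honest regular character killed by semisimplicity. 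Second, density of the single orbit $\pi^{-1}(q)\cap X_0$ in the full fibre does not follow merely from density of $X_0$ in $X$: one should invoke the theorem on fibre dimension, namely that over a suitable dense open $U$ every component of $\pi^{-1}(q)$ has dimension $\dim X-\dim X\hq G$ while the generic fibres of $\pi$ restricted to the components of $X\setminus X_0$ have strictly smaller dimension, so no component of $\pi^{-1}(q)$ lies in $X\setminus X_0$; one also shrinks $U$ so that the full preimage of $q$ under the birational morphism $X_0/G\to X\hq G$ is a single point. Both repairs are standard, so I regard your argument as a valid, more self-contained version of the paper's proof rather than as containing a genuine gap.
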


\begin{rem}
The assumptions of Corollary~\ref{cor:semisimple} are in particular fulfilled
for $X = V$ a  representation space of a semisimple group.
\end{rem}

\begin{proof}[Proof of Corollary~\ref{cor:semisimple}]
Under the hypotheses of the corollary the generic fibre of the morphism
$\pi\colon  X \to \mathrm{Spec}\bigl(\C[X]^G\bigr)$ to the invariant-theoretic
quotient contains a dense $G$-orbit, see for
example~\cite[\S3.2]{PopovVinberg}. Since the corresponding map $\pi^h\colon
X^h \to \bigl(\mathrm{Spec}(\C[X]^G)\bigr)^h$ of complex spaces is the analytic
Hilbert quotient of the Stein space $X^h$, Theorem~\ref{thm:aHquniversal}
applies.
\end{proof}

In the remainder of the present
section we prove Theorem~\ref{thm:aHquniversal}.

\begin{proof}[Proof of Theorem~\ref{thm:aHquniversal}]
The implication b) $\Rightarrow$ a) is clear. As a second step we prove a)
$\Rightarrow$ b). Let $S_{\mathrm{max}} \subset X\hq G$ be the maximal
slice-type stratum, cf.~Section~\ref{subsect:strat}, and let $X_{\mathrm{max}}=
\pi^{-1}(S_{\mathrm{max}})$. Since $S_{\mathrm{max}}$ is dense in $X\hq G$,
there exists a point $q \in U \cap S_{\mathrm{max}}$. Recall from
Proposition~\ref{prop:Lunastrat} that the map $\pi|_{X_{\mathrm{max}}}$
realises $X_{\mathrm{max}}$ as a holomorphic fibre bundle over
$S_{\mathrm{max}}$ with typical fibre $\pi^{-1}(q)$. It therefore follows from
the assumption in a) that for every $q' \in S_{\mathrm{max}}$ the fibre
$\pi^{-1}(q')$ contains a dense $G$-orbit.

Next we prove c) $\Rightarrow$ b). Suppose on the contrary that the fibre
$\pi^{-1}(q)$ does not contain a dense $G$-orbit for any $q \in
S_{\mathrm{max}}$. Let $\Omega$ be a Zariski-open subset with geometric
quotient whose existence is guaranteed by the Main Theorem. We may assume that
$\Omega$ is contained in $X_{\mathrm{max}}$. Since $\Omega$ is Zariski-open and
dense in $X_{\mathrm{max}}$, for generic $q \in S_{\mathrm{max}}$ the
intersection $\Omega \cap \pi^{-1}(q)$ is Zariski-open and dense in
$\pi^{-1}(q)$. By assumption, this intersection therefore contains two distinct
$G$-orbits $G\acts x_1 \neq G\acts x_2$. By part (5) of the Main Theorem, there
exist an $f \in \mathscr{M}_X(X)^G$ whose values at $x_1$ and $x_2$ are
well-defined and distinct. Consequently, $f$ is not contained in
$\Image(\pi^*)$, a contradiction.

Finally, we prove implication b) $\Rightarrow$ c). We first study the local
geometry of the quotient map $\pi\colon  X \to X\hq G$ under the hypotheses of
b). As in the previous paragraph, let $\Omega$ be a Zariski-open subset with
geometric quotient $p\colon\Omega \to Q$. Without loss of generality we may
assume that $\Omega \subset X_{\mathrm{max}}$. Let $x_0 \in \Omega$ be chosen
such that $p(x_0)$ and $\pi(x_0)$ are smooth points of $Q$ and $X\hq G$,
respectively, and such that $G\acts x_0$ is dense in
$\pi^{-1}\bigl(\pi(x_0)\bigr)$. Since $\pi|_{X_{\mathrm{max}}}\colon
X_{\mathrm{max}} \to S_{\mathrm{max}}$ is a holomorphic fibre bundle there
exists a local holomorphic section $\sigma\colon U \to X$ of
$\pi|_{X_{\mathrm{max}}}$ through $x_0$, defined on a neighbourhood $U$ of
$\pi(x_0)$ in $X\hq G$. Since $x_0 \in \Omega$, we may assume that $\sigma (U)
\subset \Omega$. The situation is sketched in Figure~1.
\begin{figure}[h]
\includegraphics[width=9cm]{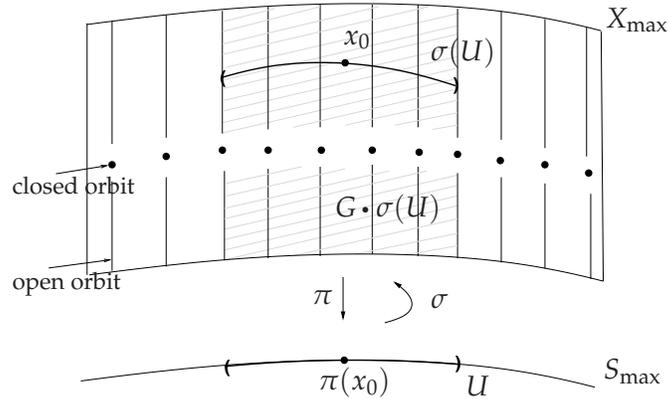}
\caption{The local geometry of the quotient map}
\end{figure}

Note that $\widetilde U := G\acts \sigma (U)\subset\Omega$ is open and
that $\pi|_{\widetilde U}\colon \widetilde U\to U$ parametrises the $G$-orbits
in $\widetilde U$ set-theoretically. Since $Q$ carries the quotient topology
with respect to $p$, the set $p(\widetilde U)$ is an open neighbourhood of
$p(x)$ in $Q$. By the choice of $x_0$, shrinking $U$ if necessary, we may
assume that both $U$ and $\widetilde U$ are smooth. Since $\pi$ is a
$G$-invariant holomorphic map, its restriction to $\Omega$ induces a uniquely
defined holomorphic map $\ol{\pi}\colon  Q \to X\hq G$ such that the diagram
\[\begin{xymatrix}{
X \ar[d]_{\pi} & \ar@{_{(}->}[l]\Omega \ar[d]^{p} \\
X\hq G & \ar[l]_>>>>>{\ol{\pi}}Q
} \end{xymatrix}
\]
commutes. By construction, the induced map $\ol{\pi}|_{p(\widetilde U)}\colon p
(\widetilde U) \to U$ is a holomorphic bijection between complex manifolds,
hence biholomorphic.

Let now $f \in \mathscr{M}_X(X)^G$ be given, and let $\Gamma_f \subset X \times
\P_1$ be its graph. It is our aim to show that $f$ descends to a meromorphic
function on $X\hq G$. The idea of the proof is to show that the image  $\pi
(P_f)$ of the pole variety $P_f$ of $f$ is nowhere dense in $X\hq G$ in order
to apply Proposition~5.2 of \cite{KaehlerQuotientsGIT}. To this end, we
consider the restricted functions $f|_{\widetilde U}\in
\mathscr{M}_X(\widetilde U)^G$ as well as $f|_{\pi^{-1}(U)}\in
\mathscr{M}_X(\pi^{-1}(U))^G$.

\begin{subclaim}There exists $\bar f \in \mathscr{M}_{X\hq G}(U)$ such that
$f|_{\pi^{-1}(U) } = \pi^*\bar f$.
\end{subclaim}

\begin{proof}[Proof of the subclaim]
Denote the graph of $f|_{\pi^{-1}(U)}$ by  $\Gamma'$, and set $\widetilde\Gamma
:= \Gamma' \cap (\widetilde U \times \P_1)$. Note that $\widetilde
\Gamma$ coincides with the graph of $f|_{\widetilde U}$. We define $\Pi
:= \pi|_{\pi^{-1}(U)} \times \id_{\P_1}\colon \pi^{-1}(U) \times \P_1
\to U \times \P_1$ and $P := p|_{\widetilde U} \times \id_{\P_1}\colon
\widetilde U \times \P_1 \to p(\widetilde U) \times \P_1$. The map $\Pi$
is an analytic Hilbert quotient, and $P$ is a geometric quotient for the
respective $G$-actions on $\pi^{-1}(U) \times \P_1$ and $\widetilde U \times
\P_1$. With this notation we summarise our setup in the following commutative
diagram
\[\begin{xymatrix}{
*{}\save[]+<0.6cm,0cm>*{\Gamma' \subset} \restore & \pi^{-1}(U) \times \P_1 \ar[d]_{\Pi} & \ar@{_{(}->}[l]
\ar[d]^{P}\widetilde U \times \P_1 & *{}\save[]-<1cm,0cm>*{\supset\widetilde \Gamma} \restore\\
  & U \times \P_1 & \ar[l]^{\cong}_{\ol{\Pi}}p(\widetilde U) \times \P_1.  &
}\end{xymatrix}
\]
The function $f|_{\widetilde U}$ descends to a meromorphic function on
$p(\widetilde U) \subset Q$ with graph $P (\widetilde \Gamma )$, cf.\ the proof
of part (1) of Proposition~\ref{Prop:PullbackMero}. Furthermore, we note that
$\ol{\Pi} \bigl(P(\widetilde \Gamma)\bigr)$ is analytic, hence closed in
$U\times\P_1$. From this and the fact that $\widetilde \Gamma$ is Zariski-open,
hence dense in the $G$-irreducible space $\Gamma'$ it follows that
\begin{equation*}
\Pi(\Gamma') = \overline{\Pi(\widetilde \Gamma)} = \overline{\ol{\Pi}
\bigl(P(\widetilde \Gamma)\bigr)}=\ol{\Pi}\bigl(P(\widetilde \Gamma)\bigr).
\end{equation*}
Here, $\overline{\;\,\cdot\, \; }$ denotes the topological closure in
$U\times\P_1$. Consequently, $\Pi(\Gamma')$ is a meromorphic graph over $U$
associated with a meromorphic function $\bar f \in \mathscr{M}_{X\hq G}(U)$
fulfilling $f|_{\pi^{-1}(U)} = \pi^* \bar f$.
\end{proof}

Finally, we consider the pole variety $P_f$ of $f$ and its image $\pi(P_f)$ in
$X\hq G$. Note that $\pi (P_f) \cap U$ coincides with the image under $\pi$ of
the pole variety of $f|_{\pi^{-1}(U)}$. The subclaim implies that
$f|_{\pi^{-1}(U)} = \pi^* \bar f$ for some $\bar f \in \mathscr{M}_{X\hq
G}(U)$. Consequently, $\pi (P_f) \cap U$ is nowhere dense in $U$. Since
$\pi(P_f)$ is analytic in the irreducible complex space $X\hq G$, we deduce
that $\pi(P_f)$ is nowhere dense in $X\hq G$. By Proposition~5.2 in
\cite{KaehlerQuotientsGIT}, this implies that the image of $\Gamma_f$ under the
map $\pi \times \id_{\P_1}\colon  X\times \P_1 \to X\hq G \times \P_1$ is a
meromorphic graph over $X\hq G$ and that $f$ descends to $X\hq G$. This
completes the proof of b) $\Rightarrow$ c).
\end{proof}

\subsection{Holomorphically convex $K$-spaces}\label{subsect:holconvex}

In this section we consider applications of the main result to spaces without
actions of complex groups, e.g. bounded domains.

\begin{prop}\label{prop:holconvex}
Let $X$ be a $K$-irreducible complex $K$-space of dimension $n$ and set
\begin{equation*}
m:= \max_{x \in X}\{\dim_\C \bigl(T_x(K\acts x) + J\cdot T_x(K \acts x)
\bigr)\}.
\end{equation*}
Suppose that there exist a Stein $K$-space $Y$ and an equivariant surjection
$\varphi\colon X\to Y$ which is injective outside a nowhere dense $K$-invariant
analytic set $A\subset X$. Then, there exist $(n-m)$ analytically independent
$K$-invariant meromorphic functions on $X$.
\end{prop}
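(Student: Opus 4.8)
The plan is to reduce to the Main Theorem by complexifying the action on the Stein space $Y$. Let $G := K^{\C}$ be the universal complexification of the compact group $K$. By Heinzner's theorem \cite{HeinznerGIT}, the $K$-action on the Stein space $Y$ extends uniquely to a holomorphic $G$-action turning $Y$ into a Stein $G$-space; since $\varphi$ is an equivariant surjection from the $K$-irreducible space $X$, the space $Y$ is $K$-irreducible and hence $G$-irreducible. The first basic observation I would record is that $\mathscr{M}_Y(Y)^K = \mathscr{M}_Y(Y)^G$: the graph of a $K$-invariant meromorphic function is a $K$-invariant analytic subset of $Y \times \P_1$, and a $K$-invariant analytic subset of a holomorphic $G$-space is automatically $G$-invariant by analytic continuation along the directions $i\mathfrak{k}$. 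The second observation is that, as $\varphi$ is dominant, pullback $\varphi^*$ defines an injective homomorphism $\mathscr{M}_Y(Y)^K \hookrightarrow \mathscr{M}_X(X)^K$ preserving analytic independence. It therefore suffices to produce $n-m$ analytically independent functions in $\mathscr{M}_Y(Y)^G$.

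To this end I would apply the Main Theorem to the Stein $G$-space $Y$ (with $H = G$), obtaining a Zariski-open dense $\Omega \subset Y$ and a geometric quotient $p \colon \Omega \to Q$ onto a Stein space $Q$. By part~(3) of the Main Theorem $p$ is a submersion realising $\Omega$ as a fibre bundle over $Q$, so $\dim_{\C} Q = \dim_{\C} Y - \dim_{\C}(G\acts y)$ for generic $y$. Combining part~(5) with the pullback correspondence for meromorphic functions through geometric quotients (Proposition~\ref{Prop:PullbackMero}) identifies $\mathscr{M}_Y(Y)^G$ with $\mathscr{M}_Q(Q)$; since $Q$ is Stein of dimension $\dim_{\C} Q$, the field $\mathscr{M}_Q(Q)$ has transcendence degree exactly $\dim_{\C} Q$ over $\C$ (a proper embedding of $Q$ into some $\C^N$ yields that many analytically independent functions, while Siegel's bound gives the reverse inequality). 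This produces the required functions once we show $\dim_{\C} Q = n-m$.

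The geometric heart of the argument is the computation of the generic orbit dimension. Writing $\mathfrak{g} = \mathfrak{k} \oplus i\mathfrak{k}$ and using that the $G$-action on $Y$ is holomorphic, the fundamental vector field of $i\xi$ equals $J$ applied to that of $\xi$; hence at every $y \in Y$ one has $T_y(G\acts y) = T_y(K\acts y) + J\cdot T_y(K\acts y)$, a $J$-invariant subspace whose complex dimension is exactly the quantity appearing in the definition of $m$. Since this dimension is the real rank of the map $(\xi,\eta) \mapsto \xi_Y(y) + J\eta_Y(y)$, it is lower semicontinuous, so it attains its maximum on a Zariski-open dense subset and its generic value equals $\max_{y \in Y}\dim_{\C}\bigl(T_y(K\acts y) + J\, T_y(K\acts y)\bigr)$. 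Finally, because $\varphi$ is surjective and injective off the nowhere dense analytic set $A$, it is generically biholomorphic; being $K$-equivariant and holomorphic, its differential intertwines $J$ and maps $T_x(K\acts x)$ isomorphically onto $T_{\varphi(x)}(K\acts \varphi(x))$ at a generic point, so the generic value of this complex dimension agrees on $X$ and on $Y$, namely $m$. Together with $\dim_{\C} Y = \dim_{\C} X = n$ (again from generic bijectivity) this gives $\dim_{\C} Q = n - m$, completing the plan.

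The step I expect to be the main obstacle is controlling the passage between the maximum defining $m$ on $X$ and the generic orbit dimension on $Y$: one must check that the locus where $\dim_{\C}(T_x(K\acts x) + J\, T_x(K\acts x))$ is maximal, the locus where $\varphi$ is a local biholomorphism, and the preimage $\varphi^{-1}(\Omega)$ all meet in a dense set, and that the relevant image points are smooth, so that the semicontinuity and the equivariant transfer can be applied simultaneously. A secondary technical point is the identification $\mathscr{M}_Y(Y)^G \cong \mathscr{M}_Q(Q)$ as a genuine isomorphism of fields, i.e.\ the lifting of meromorphic functions from the quotient $Q$ to global invariant meromorphic functions on $Y$; this is where the weakly meromorphic extension of $p$ from part~(4) of the Main Theorem enters.
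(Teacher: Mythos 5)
Your proof has a genuine gap at its very first step, and it is exactly the point the proposition is designed around. You claim that by Heinzner's theorem the $K$-action on the Stein space $Y$ \emph{extends to a holomorphic $G$-action on $Y$ itself}, with $G=K^{\C}$. This is false in general: Heinzner's result does not complexify the action on the given space, it produces an equivariant \emph{complexification}, i.e.\ a Stein $G$-space $Y^{\C}$ together with a $K$-equivariant open embedding $\imath\colon Y\to Y^{\C}$ such that $Y^{\C}=G\acts\imath(Y)$, and $Y^{\C}$ is in general strictly larger than $Y$. A minimal counterexample to your claim: $K=S^1$ acting by rotation on the unit disc $\Delta\subset\C$. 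The disc is Stein and the action is holomorphic, but no $\C^*$-action on $\Delta$ extends it (the $\C^*$-orbits leave the disc); here $Y^{\C}=\C^*\acts\Delta=\C$. This matters precisely because the proposition is aimed at bounded domains and strongly pseudoconvex spaces, where the ambient space typically admits no complex group action at all — the passage to the bigger space $Y^{\C}$ is the essential idea, not a dispensable formality. Everything you build on the false extension ($\mathscr{M}_Y(Y)^K=\mathscr{M}_Y(Y)^G$, applying the Main Theorem to $Y$ with $H=G$) therefore does not get off the ground as written.

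The repair is the paper's actual route, and once made, the rest of your outline transplants almost verbatim. Apply the Main Theorem (with $H=G$) to the Stein $G$-space $Y^{\C}$, obtaining $\Omega\subset Y^{\C}$ and the geometric quotient $p\colon\Omega\to Q$. Your computation of the generic orbit dimension is sound and is the correct justification of the paper's assertion that the maximal $G$-orbit dimension is $m$: on $Y^{\C}$ one has $T_y(G\acts y)=T_y(K\acts y)+J\cdot T_y(K\acts y)$ since the fundamental field of $i\xi$ is $J$ applied to that of $\xi$; since $Y^{\C}=G\acts\imath(Y)$ the maximum over $Y^{\C}$ equals the maximum over $Y$; and since $\varphi$ is an equivariant surjection that is biholomorphic off the nowhere dense set $A$ (and the maximal-rank locus on $X$ is open and $K$-invariant, hence meets $X\setminus A$), this maximum equals $m$, while $\dim Y^{\C}=\dim Y=n$. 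This gives $\dim Q=n-m$. For producing the functions, note that the paper does not need the full field isomorphism $\mathscr{M}_{Y^{\C}}(Y^{\C})^G\cong\mathscr{M}_Q(Q)$ that worried you: it suffices that the pulled-back coordinates $p^*z_1,\dots,p^*z_N$ of an embedding $Q\hookrightarrow\C^N$ extend to global $G$-invariant meromorphic functions on $Y^{\C}$ (this is exactly what is established in the proof of parts (4)--(6) of the Main Theorem), and since $p$ is a submersion onto the $(n-m)$-dimensional $Q$, some $n-m$ of these are analytically independent. Finally restrict to the open dense subset $\imath(Y)\subset Y^{\C}$ — restriction to a nonempty open subset preserves analytic independence — and pull back by $\varphi$, which is injective on meromorphic functions because it is generically biholomorphic.
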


\begin{rem}
Proposition~\ref{prop:holconvex} in particular applies to strongly pseudoconvex
(also called $1$-convex) complex spaces or, more generally, to holomorphically
convex spaces whose Remmert reduction is a proper modification.
\end{rem}

\begin{proof}[Proof of Proposition~\ref{prop:holconvex}]
Let $G=K^\mbb{C}$ be the complexification of $K$. According to the main result
of~\cite{HeinznerGIT} there exist a Stein $G$-space $Y^\mbb{C}$ and a
$K$-equivariant open embedding $\imath\colon Y\to Y^\mbb{C}$ such that
$Y^\mbb{C}=G\acts\imath(Y)$. Then, applying our Main Theorem we obtain a
$G$-invariant Zariski-open subset $\Omega\subset Y^\mbb{C}$ such that the
geometric quotient $p\colon\Omega\to Q=\Omega/G$ exists.

Since $\varphi$ is surjective and injective outside $A$, we have $\dim
Y^\mbb{C}=n$. Moreover, the maximal dimension of the $G$-orbits in $Y^\mbb{C}$
is $m$; hence, $\dim\Omega/G=n-m$. By part (5) of the Main Theorem the
$G$-invariant meromorphic functions separate the $G$-orbits in $\Omega$. This
implies there exist at least $n-m$ analytically independent $G$-invariant
meromorphic functions on $Y^\C$. Restricting these to $Y$ and pulling them
back to $X$ via $\varphi$ yields the desired $K$-invariant meromorphic
functions on $X$.
\end{proof}

\subsection{Actions with reductive generic stabiliser}\label{subsect:Richardson}

For $G$-connected Stein $G$-manifolds Richardson
\cite{RichardsonPrincipalSteinManifolds} proves the existence of a
\emph{principal orbit type} in the following sense: in every such manifold
there exists an open and dense subset $U$ such that the stabiliser groups of
points in $U$ are conjugate in $G$. Here, we sharpen his result in the case of
reductive stabiliser groups and draw a few consequences.

\begin{prop}\label{prop:genericreductivestab}
Let $G$ be a complex-reductive Lie group and let $X$ be a $G$-connected Stein
$G$-manifold. Assume that the principal orbit type is reductive. Then,
\begin{enumerate}
\item in the statement of the Main Theorem the set $\Omega$ can be chosen in
such a way that for all $x, y \in \Omega$ there exists a $g \in G$ with $G_y =
gG_xg^{-1}$. In particular, there exists a $G$-invariant Zariski-open dense
subset of $X$ consisting of orbits of principal orbit type;
\item additionally, $\Omega$ can be chosen such that $p\colon\Omega\to Q$ is an
analytic Hilbert quotient. In particular, $\Omega$ is Stein and $p\colon\Omega
\to Q$ is a holomorphic fibre bundle with typical fibre $G/G_x$.
\end{enumerate}
\end{prop}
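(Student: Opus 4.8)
The plan is to combine Richardson's principal orbit type theorem with the slice-type stratification of Proposition~\ref{prop:Lunastrat} so as to place the principal orbit type inside a genuinely \emph{Zariski}-open set, and then to use the reductivity hypothesis to upgrade the geometric quotient supplied by the Main Theorem to an analytic Hilbert quotient.

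First, I would fix a principal isotropy group $H$, which is complex-reductive by hypothesis, and write $X_{(H)} \subset X$ for the set of points whose $G$-stabiliser is conjugate to $H$. By Richardson's theorem \cite{RichardsonPrincipalSteinManifolds} the set $X_{(H)}$ is open and dense, but a priori only in the metric topology. To produce a Zariski-open dense subset of type $(H)$, pass to the maximal slice-type stratum $S_{\mathrm{max}} \subset X \hq G$ and to $X_{\mathrm{max}} = \pi^{-1}(S_{\mathrm{max}})$. Over $S_{\mathrm{max}}$ the closed orbits have a single reductive type $(L)$, and by Proposition~\ref{prop:Lunastrat} together with the holomorphic slice theorem the bundle $\pi|_{X_{\mathrm{max}}}$ has typical fibre $G \times_L N$ for a fixed slice $L$-module $N$. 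The $L$-action on $N$ is linear, hence algebraic, and its principal orbit type coincides with $(H)$; by the algebraic slice theory \cite{LunaSlice} the principal locus $N_{(H)} \subset N$ is therefore Zariski-open, $L$-invariant and dense. Transporting $N \setminus N_{(H)}$ through the $G$-equivariant local trivialisations of the slice bundle yields a $G$-invariant analytic subset of $X_{\mathrm{max}}$ whose complement is exactly $X_{(H)} \cap X_{\mathrm{max}}$. Intersecting this Zariski-open dense set with the set $\Omega_0$ furnished by the Main Theorem (which, after replacing $\Omega_0$ by $\Omega_0 \cap X_{\mathrm{max}}$, we may assume lies in $X_{\mathrm{max}}$) and restricting the geometric quotient to the resulting $G$-saturated open subset produces an $\Omega$ as required. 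This proves part~(1).

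For part~(2) the key elementary observation is that every $G$-orbit in $\Omega$ has the maximal dimension $\dim G - \dim H$; since an orbit lying in the boundary of another has strictly smaller dimension, each orbit is \emph{closed in $\Omega$} and carries the reductive stabiliser $H$. By the definition of analytic Hilbert quotient it then suffices to upgrade the geometric quotient $p\colon \Omega \to Q$ to a \emph{locally Stein} map, condition~(3) of a geometric quotient already providing the identity $(p_*\mathscr{O}_\Omega)^G = \mathscr{O}_Q$. To see local Steinness I would work over a Stein Zariski-open subset $V \subset S_{\mathrm{max}}$, where $\pi^{-1}(V)$ is a Stein $G$-space, and apply the holomorphic slice theorem at a closed orbit: at a principal point the slice is an $H$-module $W$ on which $H$ acts with generic stabiliser $H$, and since $W^H$ is a linear subspace this forces the $H$-action on $W$ to be trivial. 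A $G$-saturated neighbourhood of a principal orbit is thus $G$-equivariantly biholomorphic to $(G/H) \times W$, with $p$ corresponding to the projection onto $W$; as $G/H$ is affine and $W$ may be taken Stein, such a neighbourhood is Stein, so $p$ is locally Stein. Consequently $p$ is an analytic Hilbert quotient, $\Omega = p^{-1}(Q)$ is Stein because preimages of Stein subspaces under analytic Hilbert quotients are Stein, and the local product models realise $p$ as a holomorphic fibre bundle with typical fibre $G/G_x = G/H$, giving~(2).

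I expect the main obstacle to be precisely the passage from geometric to analytic Hilbert quotient in part~(2), that is, the verification that $p$ is locally Stein. The reductivity hypothesis is decisive here: it is what makes the slice representation trivial and the fibre $G/H$ affine, and it is genuinely necessary, since the closed orbits of any analytic Hilbert quotient must have reductive stabilisers by Matsushima's theorem. One must also take care to carry out the slice-theoretic analysis inside a Stein saturated piece $\pi^{-1}(V)$ rather than in all of $\Omega$, where the principal orbits need not be closed in the ambient space. A secondary technical point is to check that the fibrewise algebraic locus $N_{(H)}$ assembles into a bona fide analytic subset of $X_{\mathrm{max}}$, for which the $G$-equivariance of the trivialisations of the slice bundle is essential.
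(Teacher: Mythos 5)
Your proposal diverges from the paper's proof in both parts, and each divergence hides a genuine gap; the more serious one is in part~(2). You want to apply the holomorphic slice theorem inside a Stein saturated piece $\pi^{-1}(V)$ ``at a closed orbit: at a principal point''. But the orbits that are closed in $\pi^{-1}(V)$ are exactly the closed orbits of the $\pi$-fibres, and these have the closed-orbit type $(L)$ of the stratum $S_{\mathrm{max}}$, \emph{not} the principal type $(H)$; in general $H$ is a proper subgroup of $L$ (already for $\C^*$ acting on $\C$ by scalars: the principal type is trivial and reductive, while the unique closed orbit is $\{0\}$ with $L=\C^*$). A principal orbit is closed in $\Omega$, as you observe, but $\Omega$ is not yet known to be Stein, so the slice theorem cannot be invoked there --- that is circular --- and it is in general not closed in any $\pi$-saturated Stein subset. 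Applying the slice theorem at the actual closed orbits gives a slice $L$-module $W$ whose \emph{generic} stabiliser is conjugate to $H$, not to $L$, so your deduction ``$W^H$ contains an open set, $W^H$ is linear, hence the action on $W$ is trivial'' only works when $H=L$; if it worked in general it would prove that generic $\pi$-fibres are single closed orbits, which is false. This is precisely the point where the paper makes a birational change of model: by Reichstein--Vonessen \cite{ReichsteinStableModel} there is an equivariant birational map to an affine $G$-variety on which the stable locus (fibres equal to single closed orbits) is non-empty, and Luna's algebraic-analytic comparison \cite{Lunaalgebraicanalytic} then yields the analytic Hilbert quotient (Lemma~\ref{lem:goodRosenlicht}). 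Your argument contains no substitute for this step.

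In part~(1) there is a subtler but still genuine gap concerning Zariski-openness. Your fibrewise removal of the non-principal locus, transported through the $G$-equivariant slice-bundle trivialisations, makes $X_{(H)}\cap X_{\mathrm{max}}$ Zariski-open only \emph{in} $X_{\mathrm{max}}$: an analytic subset of a Zariski-open subset need not extend to an analytic subset of $X$ (analytic Zariski-openness does not compose; consider $\{1/n \,;\, n\in\N\}\subset\C^*\subset\C$), whereas the Main Theorem and the proposition assert that $\Omega$ is Zariski-open and dense in $X$. The paper secures this globality by working in the algebraic Zariski closure $Y$ of $\phi(X)$ inside the representation $V$ furnished by the Weak Equivariant Embedding Theorem (Proposition~\ref{prop:genericembedding}), applying Richardson's theorem on algebraic families of stabilisers \cite[Thm.~9.3.1]{RichardsonDeformationLieSubgroups} to obtain an \emph{algebraically} Zariski-open $W\subset Y$ with constant Levi type, and then using the reductivity hypothesis together with the constancy of the number of connected components in the algebraic family $(U_y)_{y\in W}$ to kill the unipotent parts; the pullback $\phi^{-1}(W)\cap X_{\mathrm{max}}$ is then honestly Zariski-open in $X$. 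Your route never uses the reductivity hypothesis in part~(1) at all, which is a warning sign: without it, constancy of the full isotropy class on a Zariski-open set in $X$ is exactly what fails, and with it, the paper's algebraic detour is the mechanism that converts Richardson's metrically open principal stratum into a Zariski-open one.
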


\begin{rems}
(1) The assumption on the stabiliser groups is equivalent to the requirement
that the normaliser $N_G(G_x)$ be reductive, see \cite[proof of
Thm.~1.2(ii)]{ReichsteinStableModel}.

(2) The assumption on the stabiliser groups is automatically fulfilled by any
commutative reductive group acting on a connected Stein manifold.
\end{rems}

As a special case of Proposition~\ref{prop:genericreductivestab} we explicitly
note the result for the case of generically free actions.

\begin{cor}\label{cor:genericallyfree}
Let $G$ be a complex-reductive Lie group and let $X$ be a $G$-connected Stein
$G$-manifold. Assume that the action is generically free. Then, in the
statement of the Main Theorem the set $\Omega$ can be chosen such that $p\colon
\Omega\to Q$ is a $G$-principal fibre bundle.
\end{cor}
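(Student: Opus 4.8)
The plan is to deduce the corollary from Proposition~\ref{prop:genericreductivestab} by checking that the fibre bundle produced there is in fact principal. First I would note that a generically free action has \emph{trivial} principal isotropy, and the trivial group is reductive; hence the principal orbit type is reductive and Proposition~\ref{prop:genericreductivestab} applies. Invoking its part~(2), I obtain a $G$-invariant Zariski-open dense subset $\Omega$ of $X$ such that $p\colon\Omega\to Q$ is simultaneously an analytic Hilbert quotient and a holomorphic fibre bundle with typical fibre $G/G_x$.

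Next I would extract two consequences of this choice of $\Omega$. By part~(1) of the proposition every isotropy group $G_x$ with $x\in\Omega$ is conjugate to the principal one, which here is trivial; thus $G$ acts freely on $\Omega$. In particular $G_x=\{e\}$ for all $x\in\Omega$, so the typical fibre $G/G_x$ of $p$ equals $G$, acted upon by left translation.

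It then remains to promote the datum ``free action together with a $G$-equivariant holomorphic fibre bundle with fibre $G$'' to the statement that $p$ is a principal $G$-bundle. Here I would return to the local models furnished by the holomorphic slice theorem, as used in Proposition~\ref{prop:Lunastrat}: over a small neighbourhood $U\subset Q$ of a point $q$, the preimage $p^{-1}(U)$ is $G$-equivariantly biholomorphic to $G\times_{G_x}S$, which reduces to $G\times S\cong G\times U$ because $G_x=\{e\}$, with $G$ acting by left translation on the $G$-factor. These trivialisations $p^{-1}(U)\cong U\times G$ are therefore $G$-equivariant, and their transition functions, being $G$-equivariant biholomorphisms of $G$ commuting with left translation, are right translations by holomorphic $G$-valued functions. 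This is precisely the cocycle data defining a holomorphic principal $G$-bundle, so $p\colon\Omega\to Q$ is a $G$-principal fibre bundle.

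The main obstacle is this last step: the bare assertion ``holomorphic fibre bundle with fibre $G/G_x$'' in Proposition~\ref{prop:genericreductivestab} does not by itself record the $G$-action, and one must verify that the local trivialisations can be chosen $G$-equivariantly. I would justify this by tracing the bundle structure back to the equivariant slice-theorem charts $G\times_{G_x}S$, where freeness of the action is exactly what collapses the slice $S$ into the base direction and leaves the whole group $G$ as the fibre.
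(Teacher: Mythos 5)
Your proposal is correct and follows the same route as the paper: the paper derives the corollary directly as a special case of Proposition~\ref{prop:genericreductivestab}, whose part (2) itself rests on the holomorphic Slice Theorem, exactly the local models $G\times_{G_x}S$ you invoke. Your final step --- checking that freeness collapses these slice charts to $G$-equivariant trivialisations $U\times G$ whose transition maps are right translations, i.e.\ principal-bundle cocycles --- is a standard verification the paper leaves implicit, and you carry it out correctly.
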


\begin{rem}\label{rem:torusaction}
The additional assumption of Corollary~\ref{cor:genericallyfree} is
automatically fulfilled by any commutative reductive group acting effectively
on a connected Stein manifold, as can be seen using the holomorphic Slice
Theorem.
\end{rem}

\begin{proof}[Proof of Proposition~\ref{prop:genericreductivestab}]
(1) Let $\phi\colon X \to V$ be a map from $X$ to a $G$-representation space
$V$ of the form guaranteed by the Weak Equivariant Embedding Theorem,
Proposition~\ref{prop:genericembedding}. Let $Y$ be the algebraic
Zariski-closure of $\phi (X)$ in $V$,
cf.~Section~\ref{subsect:geometricquotients}. Then, for any $y \in Y$, we
let $G_y = L_y \ltimes U_y$ be the Levi decomposition of the stabiliser $G_y$.
By a further result of Richardson
\cite[Thm.~9.3.1]{RichardsonDeformationLieSubgroups}, there exists a
$G$-invariant smooth Zariski-open subset $W$ of $Y$ such that $L_{y'}$ is
conjugate to $L_y$ in $G$ for all $y,y' \in W$ and such that $(U_y)_{y\in W}$
is an algebraic family of algebraic subgroups of $G$ in the sense of
\cite[Def.~6.2.1]{RichardsonDeformationLieSubgroups}. Owing to the definition
of $Y$, the image $\phi(X)$ intersects $W$ non-trivially. By the assumption on
the principal orbit type there exists a point $y_0 \in W \cap
\phi(X_\mathrm{max})$ such that $G_{y_0} = L_{y_0}$ and $U_{y_0}= \{e\}$. Since
the $(U_y)_{y\in W}$ form an algebraic family, we may assume that the number of
connected components of $U_y$ is constant for all $y \in W$. Consequently,
since $y_0 \in W$, the group $U_y$ is zero-dimensional and connected for all $y
\in W$, therefore trivial. It follows that $G_y$ is conjugate to $G_{y'}$ in
$G$ for all $y,y'\in W$. Consequently, the same is true for any pair of points
in the Zariski-open $G$-invariant subset $\phi^{-1}(W) \cap X_{\mathrm{max}}$
of $X$. Intersecting the set guaranteed by the Main Theorem with
$\phi^{-1}(W)\cap X_{\mathrm{max}}$ we arrive at the desired result.

(2) We have seen that in the affine algebraic variety $Y$ there exists a
Zariski-open and dense subset $W$ such that $G_x$ is reductive for all $x \in
W$. Hence, the existence of a Zariski-open subset $\Omega$ such that $p\colon \Omega
\to Q$ is an analytic Hilbert quotient is a consequence of
Lemma~\ref{lem:goodRosenlicht} below. The fact that the quotient map is a
holomorphic fibre bundle then follows by combining part (1) with the
holomorphic Slice Theorem, cf.~Proposition~\ref{prop:Lunastrat} and
\cite[Cor.~3.2.5]{LunaSlice}.
\end{proof}

\begin{lemma}\label{lem:goodRosenlicht}
Let $X$ be an algebraic $G$-variety. Assume that for $x\in X$ in general
position the stabiliser $G_x$ is reductive. Then, there exists a non-empty
Zariski-open affine $G$-invariant subset $U$ in $X$ such that the action of $G$
on $U$ admits a good geometric quotient. The associated map $U^h \to (U/G)^h$
of complex spaces is an analytic Hilbert quotient.
\end{lemma}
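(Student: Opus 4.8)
The plan is to start from the geometric quotient produced by Rosenlicht's Theorem and to upgrade it to an \emph{affine} --- hence good --- quotient; the reductivity of the generic stabiliser enters precisely at this last step, via Matsushima's criterion. Since passing to a $G$-irreducible component does not affect the statement, I would first assume $X$ to be $G$-irreducible. Applying Theorem~\ref{thm:Rosenlicht} yields a $G$-invariant Zariski-open dense subset $U_1 \subset X$ together with a geometric quotient $q\colon U_1 \to R$; in particular every fibre of $q$ is a single orbit and therefore closed in $U_1$, and the sheaf condition $(q_*\mathscr O_{U_1})^G = \mathscr O_R$ of Definition~\ref{defi:geometricquotient} holds automatically.

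Next I would normalise the stabilisers. By hypothesis the generic isotropy group is reductive, so exactly as in the proof of Proposition~\ref{prop:genericreductivestab}(1) --- invoking Richardson's theorem \cite[Thm.~9.3.1]{RichardsonDeformationLieSubgroups} on algebraic families of isotropy groups --- there is a $G$-invariant Zariski-open dense subset of $X$ on which all stabilisers are conjugate to a fixed reductive subgroup $H < G$. Intersecting it with $U_1$ and using that the isotropy type is a $G$-invariant, orbitwise-constant datum, I obtain a $G$-saturated Zariski-open dense $W \subset U_1$; thus $W = q^{-1}\bigl(q(W)\bigr)$, the image $R' := q(W)$ is open in $R$, and $q|_W \colon W \to R'$ is again a geometric quotient, now with all orbits isomorphic to the \emph{affine} homogeneous space $G/H$ (affine by Matsushima, since $H$ is reductive).

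The heart of the argument --- and the step I expect to be the main obstacle --- is to deduce that $q|_W$ is an \emph{affine morphism}. Having arranged constant reductive isotropy type and closed orbits, I would apply the algebraic slice theorem in the form of \cite[Cor.~3.2.5]{LunaSlice} (the algebraic counterpart of Proposition~\ref{prop:Lunastrat}): over the principal stratum the quotient map is \'etale-locally trivial with typical fibre $G/H$. Since $G/H$ is affine, $q|_W$ becomes an affine morphism after an \'etale surjective base change, and affineness of a morphism descends along faithfully flat maps; hence $q|_W$ is affine. The delicate point to verify carefully is that the constancy of the isotropy type together with closedness of the orbits really places us in the situation covered by Luna's corollary --- in particular that the orbits are closed in a suitable affine ambient --- which is the reason for first cutting down to the Rosenlicht locus $U_1$, where the orbits are genuine fibres rather than merely locally closed.

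Finally I would shrink the base: choose an affine Zariski-open dense $V \subset R'$ and set $U := q^{-1}(V)$. Being the preimage of an affine scheme under the affine morphism $q|_W$, the set $U$ is affine, $G$-invariant and Zariski-open; the restriction $q\colon U \to V$ is still a geometric quotient and is affine, so it is a good geometric quotient, and we may take $U/G = V$. The remaining assertion is then immediate from Luna's comparison of algebraic and analytic quotients \cite{Lunaalgebraicanalytic}, recalled in Section~\ref{subsect:strat}: the analytification of a good quotient is an analytic Hilbert quotient, so $U^h \to (U/G)^h$ has the required property.
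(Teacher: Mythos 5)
Your opening reductions (passing to a $G$-irreducible component, applying Rosenlicht's Theorem, normalising the isotropy type via Richardson so that all stabilisers on a saturated open set $W$ are conjugate to a fixed reductive $H$) are sound, but the step you yourself flag as delicate is a genuine gap, and it is where the proof breaks down. Luna's slice theorem, and in particular the \'etale-local triviality statement \cite[Cor.~3.2.5]{LunaSlice} you invoke, is proved for \emph{affine} $G$-varieties (more generally, in the presence of a good quotient): the construction of an \'etale slice at a closed orbit requires a $G$-saturated affine neighbourhood equivariantly embedded in a $G$-module, and this is exactly what affineness supplies. Your $W$ is merely a $G$-invariant open subset of an arbitrary algebraic $G$-variety carrying a Rosenlicht geometric quotient; its orbits are closed \emph{in $W$}, but they are not closed orbits in any ambient affine $G$-variety, and no slice theorem is available in that generality. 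Note that producing a $G$-invariant \emph{affine} Zariski-open subset is precisely the nontrivial conclusion of the lemma, so deducing affineness of $q|_W$ from a theorem that presupposes such a setting is circular: constancy of the (reductive) isotropy type plus closedness of orbits in $W$ does not by itself yield slices or an affine morphism, and nothing in your outline substitutes for the missing affine model.

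The paper supplies exactly this missing input via the stable affine models theorem of Reichstein and Vonessen \cite{ReichsteinStableModel}: reductivity of the stabiliser in general position guarantees a birational $G$-equivariant map $\phi\colon X\dasharrow Y$ onto an \emph{affine} $G$-variety $Y$ whose stable locus $Y^{st}=\bigl\{y\in Y\mid \pi_Y^{-1}\bigl(\pi_Y(y)\bigr)=G\acts y\bigr\}$ is non-empty. After shrinking so that $Y^{st}$ is affine and $\phi^{-1}|_{Y^{st}}$ is an isomorphism onto its image, that image is a $G$-invariant affine Zariski-open subset $U\subset X$ on which the restriction of the good quotient $\pi_Y$ is a good geometric quotient; the analytic statement then follows from \cite{Lunaalgebraicanalytic}, exactly as in your final step (which is correct). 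Observe that in the paper's argument the reductivity hypothesis enters through Reichstein--Vonessen itself --- their theorem characterises the existence of such stable affine models by this very condition --- whereas in your outline it enters only through Matsushima and an inapplicable slice argument. To repair your proof you would need an independent argument producing a $G$-invariant affine (or at least good-quotient) neighbourhood of the generic orbit, and that is essentially Reichstein--Vonessen's theorem.
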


\begin{proof}
By a result of
Reichstein and Vonessen \cite{ReichsteinStableModel} there exists a birational
$G$-equivariant map $\phi\colon X \dasharrow Y$ to an affine $G$-variety $Y$
with the following property: if $\pi_Y\colon Y \to Y\hq G$ denotes the
categorical good quotient, then the set $Y^{st} := \{y \in Y \mid
\pi_Y^{-1}\bigl(\pi_Y(y)\bigr)= G \acts y\}$ is non-empty. Without loss of
generality, we may assume that $Y^{st}$ is affine and that
$\phi^{-1}|_{Y^{st}}\colon Y^{st}\to X$ is an isomorphism onto its image. This
shows the first claim. The second claim is a consequence of
\cite{Lunaalgebraicanalytic}.
\end{proof}

\subsection{Unipotent groups}\label{subsect:unipotent}

In this section we discuss consequences of our main result for actions of
unipotent groups.

\begin{prop}
Let $H<G$ be a unipotent algebraic subgroup of a complex-reductive Lie group
and  let $X$ be a Stein $G$-space. Then, every $H$-orbit in $X$ is closed.
Furthermore, the topological quotient $X/H$ is generically Hausdorff and there
exists a $H$-invariant Zariski-open dense subset $U$ of $X$ such that the
restriction of the topological quotient $p\colon X\to X/H$ to $U$ is a
geometric quotient in the category of complex spaces.
\end{prop}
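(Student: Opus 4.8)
The plan is to treat the three assertions separately, deriving the closedness of orbits from the affineness of the fibres of the analytic Hilbert quotient, and the two statements about the quotient space directly from the Main Theorem applied to the algebraic subgroup $H$.

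First I would prove that every $H$-orbit is closed. Let $\pi\colon X \to X\hq G$ be the analytic Hilbert quotient, which exists since $X$ is a Stein $G$-space. Fix $x \in X$ and set $q := \pi(x)$ and $F := \pi^{-1}(q)$. Because $H < G$ and $\pi$ is $G$-invariant, the whole orbit $H\acts x$ is contained in $F$, and $F$ is closed in $X$. By the property that preimages of Stein subspaces are Stein, the fibre $F$ is a Stein subspace, and by the property that the restriction of $\pi$ to a $G$-invariant closed subspace is again an analytic Hilbert quotient, the map $\pi|_F\colon F \to \{q\}$ is an analytic Hilbert quotient; hence $\mathscr{O}_F(F)^G = \C$. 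By Snow's theorem \cite[Cor.~5.6]{Snow}, $F$ therefore carries the structure of an affine algebraic $G$-variety. On this affine variety the unipotent group $H$ acts algebraically, so by the classical theorem of Kostant and Rosenlicht on unipotent group actions the orbit $H\acts x$ is closed in $F$. As $F$ is closed in $X$, the orbit $H\acts x$ is closed in $X$, as claimed.

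Next, since $H$ is in particular an algebraic subgroup of the complex-reductive group $G$, the Main Theorem applies and yields an $H$-invariant Zariski-open dense subset $\Omega \subset X$ together with a geometric quotient $p_\Omega\colon \Omega \to Q$ onto a Stein space $Q$. Let $p\colon X \to X/H$ denote the topological quotient, endowed with the quotient topology. Because $\Omega$ is open and $H$-invariant, it is saturated with respect to $p$, so $p(\Omega)$ is open in $X/H$ and the restriction $p|_\Omega\colon \Omega \to p(\Omega)$ is again a topological quotient map; in particular the quotient topology on $p(\Omega) = \Omega/H$ coincides with the subspace topology inherited from $X/H$. By the defining properties of a geometric quotient the space $Q$ carries exactly this quotient topology and is Hausdorff, being a complex space. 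Since $\Omega$ is dense in $X$, the subset $p(\Omega)$ is open, dense and Hausdorff in $X/H$, which shows that $X/H$ is generically Hausdorff.

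Finally, setting $U := \Omega$, I would transport the complex structure of $Q$ to $p(U) = U/H$ via the homeomorphism above. Then the restriction $p|_U\colon U \to U/H$ of the topological quotient is identified with $p_\Omega\colon \Omega \to Q$, and the latter is a geometric quotient for the $H$-action in the category of complex spaces by the Main Theorem. I expect the step requiring the most care to be the reduction in the proof of closedness: one must verify that Snow's theorem genuinely applies to the fibres $F$ (which may be reducible and need not meet $X_{\mathrm{max}}$) so that the closedness result for unipotent group actions on affine varieties can be invoked. Once this is in place, the remaining assertions follow formally from the Main Theorem together with elementary properties of open saturated subsets and quotient topologies.
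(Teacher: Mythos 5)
Your proposal is correct and takes essentially the same route as the paper: closedness of each $H$-orbit is deduced from the natural affine algebraic $G$-structure on the fibre $\pi^{-1}\bigl(\pi(x)\bigr)$ of the analytic Hilbert quotient (Snow, \cite[Cor.~5.6]{Snow}) combined with the Kostant--Rosenlicht closedness theorem for unipotent orbits (the paper cites \cite[Appendix]{BirkesOrbits}), after which the Hausdorffness and geometric-quotient statements follow from the Main Theorem exactly as you describe. Your additional verification that Snow's result applies -- that $F$ is Stein and $\mathscr{O}_F(F)^G=\C$ -- merely makes explicit what the paper cites directly, and the caveat you raise is harmless since the Kostant--Rosenlicht theorem holds for arbitrary (possibly reducible) affine varieties.
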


\begin{proof}
Assuming that every $H$-orbit is closed, the remaining statements follow
directly from the Main Theorem. So, let $H\acts x \subset X$ be any $H$-orbit.
Then, clearly $\overline {H\acts x} \subset \overline{G\acts x}$. However,
the fibre $\pi^{-1}\bigl(\pi(x)\bigr) \supset \overline{G\acts x}$ of the
analytic Hilbert quotient $\pi\colon X \to X\hq G$ carries a natural affine
algebraic structure with respect to which the $G$-action is algebraic, see
\cite[Cor.~5.6]{Snow}. Since $H < G$ is algebraic by assumption, by the
corresponding result in the affine algebraic case (which is proven for example
in \cite[Appendix]{BirkesOrbits}) the orbit $H\acts x$ is closed in
$\pi^{-1}\bigl(\pi(x)\bigr)$ and hence in $X$.
\end{proof}

\begin{ex}
There exists a domain of holomorphy $D$ in $\mbb{C}^2$ endowed with a free
holomorphic action of $H=\mbb{C}$ such that the topological closure of every
$H$-orbit is a real hypersurface in $D$, cf.~\cite[Sect.~7-8]{HubbOV}. In
particular, there is no open $\C$-invariant subset $\Omega$ in $D$ such that
$\Omega/\C$ is Hausdorff. Hence, we cannot expect holomorphic actions of
algebraic groups to have any of the properties stated in the Main Theorem if
they do not extend to holomorphic actions of some complex-reductive group.
\end{ex}

\section{Examples}

In the analytic setup the question of existence of a Rosenlicht quotient
consists of the following two parts:
\begin{enumerate}
\item Does there exist a Zariski-open subset on which the action of $G$ admits
a geometric quotient?
\item Do the invariant meromorphic functions separate the $G$-orbits in general
position?
\end{enumerate}
In the following we are going to describe examples showing that the assumption
made in the Main Theorem are indeed necessary to obtain a positive answer to
both questions.

In the Main Theorem it is assumed that the group $H$ under discussion is an
algebraic subgroup of a reductive group acting on $X$. The following examples
show that this algebraicity assumption is indeed necessary in order to obtain a
geometric quotient. All these examples deal with actions of discrete groups,
which we denote by $\Gamma$ instead of $H$.

\begin{ex}
Let $G=\C^*\times\C^*$ and let $\Gamma \cong \Z^2$ be the discrete subgroup
generated by the elements $e$ and $e^{-\pi}$. Furthermore, consider the
subgroup $M = \textrm{diag}(\C^*)$. Then, the quotient $X=G/M$ is isomorphic to
$\C^*$ via the map $[(z,w)]\mapsto z/w$, and the induced $\Z^2$-action is given
by $(m_1, m_2)\acts z = e^{m_1 + m_2\pi}$. For this action there does not even
exist an open subset of $\C^*$ that admits a Hausdorff topological quotient.
\end{ex}

Note that the action of the ambient reductive group $G$ is not effective in
the above example. The next example shows that even if the $G$-action is
effective geometric quotients for non-algebraic subgroups might not exist.

\begin{ex}
We consider the action of the discrete subgroup $\Gamma:={\rm{SL}}_2(\Z)$ of
$G:={\rm{SL}}_2(\C)$ on the homogeneous Stein manifold $X=G/T$, where $T$ is
the maximal torus of diagonal matrices in $G$. We claim that there does not
exist a $\Gamma$-invariant Zariski-open subset $U$ of $X$ such that the
quotient $U/\Gamma$ is Hausdorff. First, we consider an explicit realisation of
this action. Taking a regular element $\xi\in\mathfrak{t} = \Lie (T)$ we have
$\Ad(G)\xi\cong G/T$. Note that the ring of invariants
$\mathbb{C}[\mathfrak{g}]^G$ for the adjoint action of $G$ on its Lie algebra
$\mathfrak{g}$ is equal to $\mathbb{C}[\det]$. An element $\xi\in\mathfrak{g}$
is regular if and only if $\det(\xi)\not=0$. Therefore, $G/T$ can be identified
with
\begin{equation*}
\left\{(x,y,z)\in\mathbb{C}^3\,;\, \det\begin{pmatrix}x&y\\z&-x\end{pmatrix}=-1
\right\}.
\end{equation*}
The $\Gamma$-action on $X=G/T$ in this realization is induced by conjugation.
As an auxiliary tool, we are going to consider the induced action of $\C$ and
of the discrete subgroup $\Z < \C$ on $X$ given by the embedding of $\C$ into
${\rm{SL}}_2(\C)$ as upper triangular matrices. Explicitly,
for $t\in \C$ we obtain
\begin{equation}\label{eq:Caction}
t\acts(x,y,z)=(x+tz,y-2tx-t^2z,z).
\end{equation}
Let now $U$ be any $\Gamma$-invariant Zariski-open subset of $X$. Since there
are no $\Gamma$-invariant analytic hypersurfaces in $G$ (see
\cite{AkhiezerInvariantMeromorphic}, or \cite[\S2.2, Ex.2]{HuckOeljeBook} for
an elementary proof) there are no such hypersurfaces in $X$ either.
Consequently, the complement $A:= X \setminus U$ has pure codimension two in
$X$, i.e., $A$ is a discrete set of points. The map $p\colon X\to\mathbb{C}$,
$p(x,y,z)=z$ is $\mathbb{C}$-invariant, the fibre $p^{-1}(a)$ for $a \neq 0$
consists of a single orbit. However, the fiber $p^{-1}(0)$ is the union of the
two $\mathbb{C}$-orbits $\mathbb{C}\acts (1,0,0) = \{(1,y,0)\,;\, y \in \C \}$
and $\mathbb{C}\acts (-1,0,0)= \{(-1, y, 0) \,;\, y \in \C \}$, which cannot be
separated by $\C$-invariant open neighbourhoods. Hence, $X/\mathbb{C}$ is not
Hausdorff.

We show by direct calculation that $U/\Gamma$ is not Hausdorff, either. Since
$A$ consists of isolated points, there exists $y_0 \in \C \setminus
\mathbb{Q}$ such that both $(1, y_0, 0)$ and $(-1,y_0,0)$ are contained in $U$.
Using that $y_0$ is irrational, one checks by direct computation that these two
points lie in different $\Gamma$-orbits. We are going to show that the orbits
$\Gamma\acts(1,y_0,0)$ and $\Gamma \acts (-1, y_0, 0)$ cannot be separated by
invariant open sets. To this end, let $V$ be any open $\Gamma$-invariant
neighbourhood of $(1, y_0, 0)$ in $U$. Since $A$ is discrete, for all integers
$m\gg0$ the points $p_m:=(1,y_0,-2/m)$ are contained in $V$. Using
\eqref{eq:Caction} we compute
\begin{equation*}
m\acts p_m  = \left(1+m\cdot\frac{-2}{m},y_0-2m-m^2\cdot\frac{-2}{m},
\frac{-2}{m}\right) = \left(-1, y_0, \frac{-2}{m}\right) \in V.
\end{equation*}
If $W$ is any open neighbourhood of $(-1,y_0,0)$ in $U$, then by the above
computation $m\acts p_m \in W$ for $m \gg0$. Since the $\Z$-orbits of the
$p_m$ are contained in the corresponding $\Gamma$-orbits, every
$\Gamma$-invariant neighbourhood $V$ of $(1, y_0, 0)$ thus intersects every
open neighbourhood $W$ of $(-1,y_0,0)$, so $U/\Gamma$ cannot be Hausdorff.

As a concluding remark, note that by removing the fibre $p^{-1}(0)$ from $X$ we
obtain a Zariski-open $\C$-invariant subset of $X$ on which the $\C$-action
admits a Hausdorff quotient, in accordance with Rosenlicht's Theorem and with
the main result of this paper.
\end{ex}

In the case of non-algebraic subgroups acting on Stein manifolds the invariant
meromorphic functions do not necessarily separate generic orbits, even if a
meromorphic quotient exists. This is exemplified in the following.

\begin{ex}
We consider the subgroup $\Gamma:= {\rm{SL}}_2(\Z)$ of ${\rm{SL}}_2(\C)$ acting
on the Stein manifold $X={\rm{SL}}_2(\C)$ by $\gamma\acts g = g\gamma^{-1}$.
Then, the action is proper and free, and hence the geometric quotient $X/\Gamma$
exists.  Let $U$ be any $\Gamma$-invariant analytically Zariski-open subset of
$X$. Then $U/\Gamma$ exists and is biholomorphic to the image of $U$ in the
quotient $X/\Gamma$. Using that there are no $\Gamma$-invariant hypersurfaces
in ${\rm{SL}}_2(\C)$ we see that the complement of $U$ in $X$ has no
codimension-one components. It follows that every $\Gamma$-invariant
meromorphic function on $U$ extends to a $\Gamma$-invariant meromorphic
function on the whole of $X$ by Levi's Theorem. However, because of the
non-existence of invariant hypersurfaces, the pole variety of every
$\Gamma$-invariant meromorphic function on $X$ is empty. Hence, every such
function is holomorphic, and therefore constant.
\end{ex}

\section{A weak equivariant embedding theorem}

In the following let $X$ be a $G$-irreducible Stein $G$-space for a
complex-reductive Lie group $G$. The main technical ingredient in the proof of
our main result is an equivariant version of the following result of Remmert
and Narasimhan:

\begin{thm}[\cite{NarasimhanEmbedding}]\label{thm:NarasimhanEmbedding}
Let $X$ be a finite-dimensional Stein space. Then there exist a finite
dimensional complex vector space $V$ and a proper injective holomorphic map
$\phi\colon X\to V$ that is an immersion on $X\setminus X_{\text{sing}}$.
\end{thm}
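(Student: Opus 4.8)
The plan is to follow the classical strategy of Remmert and Narasimhan: exploit the abundance of global holomorphic functions on a Stein space to build an injective local embedding into an infinite-dimensional target, force properness by means of an exhaustion, and finally reduce the target to finite dimension.

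First I would produce the local data. Because $X$ is Stein, global holomorphic functions separate points, and at every point $x \in X$ there are finitely many functions in $\mathscr{O}_X(X)$ whose germs generate the maximal ideal $\mathfrak{m}_x$ (equivalently, whose differentials span the Zariski cotangent space $\mathfrak{m}_x/\mathfrak{m}_x^2$); these realise a neighbourhood of $x$ as a closed analytic subset of some $\C^{N_x}$ and, at a smooth point, yield a local immersion. Since $X$ has finite dimension $n$, the embedding dimension is bounded on every compact set, and since $X$ has countable topology I would extract a countable family of functions assembling into a holomorphic map $F_0 \colon X \to \C^{(\N)}$ that is injective, a local closed embedding everywhere, and an immersion on $X \setminus X_{\text{sing}}$.

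Second I would arrange properness. Using that $X$ is holomorphically convex, I would fix a normal exhaustion $K_1 \Subset K_2 \Subset \cdots$ by holomorphically convex compacta with $\bigcup_j K_j = X$, and on each shell $K_{j+1} \setminus \mathrm{int}(K_j)$ produce, via holomorphic convexity, functions that are small on $K_j$ but large further out; rapidly decreasing coefficients make the sums converge to finitely many holomorphic functions whose moduli jointly tend to infinity at the ends of $X$, that is, to a proper holomorphic map $\sigma \colon X \to \C^k$. The point of introducing $\sigma$ is that properness localises the remaining conditions: if two points share a $\sigma$-image they lie in a common compact set, and on a compact set injectivity and immersivity are open conditions already satisfied by $F_0$, hence by finitely many of its coordinates. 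Combining $\sigma$ with such a finite truncation of $F_0$ yields a proper, injective holomorphic map $G \colon X \to \C^{M}$, $M < \infty$, that is an immersion on $X \setminus X_{\text{sing}}$; as a proper injective continuous map into a Hausdorff space, $G$ is then a homeomorphism onto its closed image. Since the theorem only demands a finite-dimensional $V$, this already suffices.

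The main obstacle is precisely this interplay between properness and finiteness, and — if one wants the sharp target $V = \C^{2n+1}$ — the concluding dimension reduction. For the latter I would argue that whenever $M > 2n+1$ a generic linear projection $\pi \colon \C^{M} \to \C^{M-1}$ keeps $\pi \circ G$ proper, injective, and immersive on the smooth locus, and then iterate. This is where the numerology lives: the secant directions governing injectivity and the image-tangent directions governing immersivity sweep out sets of real dimension at most $2n$ in $\mathbb{P}(\C^{M})$, as do the asymptotic directions of $G$ at infinity that control properness, so for $2n < M-1$ a Baire/Sard argument furnishes a projection avoiding all of them simultaneously. Checking that properness — which is not an open condition — survives a generic projection, and controlling the secant and tangent maps near $X_{\text{sing}}$ and near infinity, is the genuinely technical part; the remainder of the argument is soft.
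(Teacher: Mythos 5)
You have correctly identified the classical Remmert--Narasimhan strategy, and it is worth noting that the paper itself offers no proof of this statement: it is quoted verbatim from Narasimhan's paper \cite{NarasimhanEmbedding} and used as a black box, so the comparison is against the classical argument. Measured against that, your sketch elides the two steps that constitute the actual mathematical content, and in both cases the gap is the same one: passing from countably many global functions to finitely many.

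First, the properness step. Holomorphic convexity gives, for each shell $K_{j+1}\setminus\mathrm{int}(K_j)$, \emph{finitely many} functions small on $K_j$ and jointly large on the shell, but the number of functions needed per shell is in general unbounded in $j$, so ``rapidly decreasing coefficients'' do not assemble these into finitely many sums; worse, when you add the shell-$j$ term to the earlier partial sum, that partial sum is utterly uncontrolled (possibly huge) on shell $j$, so largeness is not preserved under summation --- the cancellation problem comes from the \emph{earlier} terms, not the later ones. The correct argument fixes the number of slots $k\geq n+1$ in advance and runs an inductive Baire-category/genericity scheme in the Fr\'echet space $\mathscr{O}_X(X)^k$, choosing each shell's correction generically so that the $k$ sublevel ``bad sets'' have empty intersection on the shell; this is precisely where $\dim X<\infty$ enters. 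Tellingly, your properness argument never uses finite-dimensionality, yet a proper holomorphic map $X\to\C^k$ forces $\dim X\leq k$ (its fibres are compact analytic subsets of a Stein space, hence finite), so no dimension-free shell argument can be complete. Second, the localisation of injectivity fails as stated: properness of $\sigma$ puts any unseparated pair into a single compact fibre $\sigma^{-1}\bigl(\sigma(x)\bigr)$, but these fibres exhaust $X$, so the coincidence set $\bigl\{(x,y)\in X\times X \mid x\neq y,\ \sigma(x)=\sigma(y)\bigr\}$ is non-compact, and no finite truncation of $F_0$ chosen on one compact separates all such pairs globally. One needs the same shell-by-shell genericity scheme, with the dimension count on the coincidence and tangent sets; this is where the $2n+1$ numerology already enters the qualitative finite-dimensional statement, not merely the optional sharpening you defer to step 4. (A minor further point for the singular case: the embedding dimension of a Stein space can be unbounded at infinity, so after truncation your map cannot remain a ``local closed embedding everywhere''; this is harmless since the theorem only claims immersivity on $X\setminus X_{\text{sing}}$, but the phrase should be dropped.) Your step 4, including the caveat that properness is not open under projection and needs control of asymptotic directions, is sound in outline.
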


In this paper a holomorphic map $\phi\colon X\to Y$ from a Stein space $X$ into
an affine variety $Y$ is called a \emph{Narasim\-han map} if $\phi$ is proper
injective and if $\phi|_{X\setminus X_\text{sing}}$ is an immersion.

In the folllowing we will investigate to what extend there exists an
equivariant version of this fundamental result.

Suppose that the Stein $G$-space $X$ admits an equivariant Nara\-sim\-han map
$\phi\colon X\to V$ into a finite-dimensional $G$-representation space $V$.
In this situation the stratification of $X\hq G$ into orbit-types is necessarily
finite. In contrast, Heinzner~\cite[Sect.\ 3]{HeinznerEinbettungen} has given
an example of a Stein $\C^*$-manifold that contains a sequence of points
$\{x_n\}$ lying in closed $\C^*$-orbits and having iso\-tro\-py groups
$\C^*_{x_n}=\Z _{p_n}$, where $\{p_n\}$ is a sequence of prime numbers such
that $\lim_{n\to \infty}p_n =\infty$. Hence, the first guess for an equivariant
version of Theorem~\ref{thm:NarasimhanEmbedding} does not lead to the desired
result.

The problems encountered in the above example are caused be the appearance of
''too many'' different isotropy groups and slice-representations for our given
action. However, recall from~Proposition~\ref{prop:Lunastrat} that there exists
a maximal, Zariski-open stratum $S_{\mathrm{max}}$ in $X\hq G$ over which the
type of the slice representation is constant. Using the methods of
\cite{HeinznerEinbettungen} we prove the following equivariant version of
Theorem~\ref{thm:NarasimhanEmbedding}.

\begin{prop}[Weak Equivariant Embedding Theorem]\label{prop:genericembedding}
Let $X$ be a $G$-irreducible Stein $G$-space with associated analytic Hilbert
quotient $\pi\colon~X \to X\hq G$. Let $S_{\mathrm{max}}$ be the maximal
stratum of the slice-type stratification, and
$X_{\mathrm{max}}:=\pi^{-1}(S_{\mathrm{max}})$. Then there exists a
finite-dimensional $G$-module $V$ with analytic Hilbert quotient $\pi_V\colon V
\to V\hq G$, and a holomorphic $G$-equivariant map $\phi\colon X\to V$ with the
following properties:
\begin{enumerate}
\item The induced holomorphic map $\ol{\phi}\colon X\hq G\to V\hq G$ is a Narasimhan map
and
\item the induced holomorphic map
\begin{equation*}
\phi|_{X_{\mathrm{max}}}\colon X_{\mathrm{max}}\to V_{\mathrm{max}}:=V\setminus
\pi_V^{-1}\bigl(\ol{\phi}(S_{\mathrm{max}}^{c})\bigr)
\end{equation*}
is a closed embedding.
\end{enumerate}
\end{prop}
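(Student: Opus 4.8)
The plan is to construct the $G$-module $V$ and the map $\phi$ in two stages: first secure property~(1) by embedding the quotient, then enlarge $V$ to secure property~(2) over the maximal stratum while automatically preserving~(1). For the first stage, recall that $X\hq G$ is a finite-dimensional Stein space, so Theorem~\ref{thm:NarasimhanEmbedding} provides a Narasimhan map $\ol{\phi}_0\colon X\hq G\to\mathbb{C}^M$ whose components are invariant holomorphic functions $h_1,\dots,h_M\in\mathscr{O}_X(X)^G=\mathscr{O}_{X\hq G}(X\hq G)$. I regard $V_0:=\mathbb{C}^M$ as a trivial $G$-module and set $\phi_0:=(h_1,\dots,h_M)\colon X\to V_0$, so that $V_0\hq G=V_0$ and the induced quotient map is exactly the Narasimhan map $\ol{\phi}_0$. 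The key observation that lets me add components freely is this: if $V=V_0\oplus V'$ and $\phi=(\phi_0,\phi')$, then composing $\ol{\phi}\colon X\hq G\to V\hq G$ with the map $V\hq G\to V_0\hq G$ induced by the projection $V\to V_0$ recovers $\ol{\phi}_0$. Hence $\ol{\phi}$ is injective and proper (the preimage of a compactum $C$ is a closed subset of the compactum $\ol{\phi}_0^{-1}(\mathrm{pr}(C))$) and its differential dominates the injective $d\ol{\phi}_0$ at smooth points. Thus property~(1) holds for every enlargement of $\phi_0$, and it remains only to arrange~(2).

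For the second stage I invoke the holomorphic Slice Theorem together with the defining feature of $S_{\mathrm{max}}$, namely that the slice type is constant along it. Fix $q_0\in S_{\mathrm{max}}$ with closed orbit $G\acts x_0$ in $\pi^{-1}(q_0)$; then $H:=G_{x_0}$ is reductive with slice representation on an $H$-module $N$, and by Proposition~\ref{prop:Lunastrat} the single model $(H,N)$ governs $X_{\mathrm{max}}$, which is a holomorphic fibre bundle over $S_{\mathrm{max}}$ with typical fibre the slice model $G\times_H N$. Since $H$ is reductive, $G/H$ is affine and admits a $G$-equivariant closed embedding into a $G$-module, and the slice data can be incorporated as well; following the methods of~\cite{HeinznerEinbettungen} one assembles from finitely many $G$-finite vectors of $\mathscr{O}_X(X)$ a $G$-module $V'$ and an equivariant map $\phi'\colon X\to V'$ whose restriction to each slice model is a closed embedding. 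Setting $V:=V_0\oplus V'$ and $\phi:=(\phi_0,\phi')$, injectivity on $X_{\mathrm{max}}$ follows because two points with the same image lie in one fibre (by injectivity of $\ol{\phi}$) and are separated by $\phi'$; and $\phi$ is immersive there because $\ker d\phi_{0,x}$ equals the fibre tangent space (as $\phi_0=\ol{\phi}_0\circ\pi$ with $\ol{\phi}_0$ immersive on the smooth stratum $S_{\mathrm{max}}$), while $d\phi'_x$ is injective on that fibre tangent space, so the two kernels meet only in zero.

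It then remains to check that $\phi|_{X_{\mathrm{max}}}$ is a closed map into $V_{\mathrm{max}}$, and here the definition of $V_{\mathrm{max}}$ does exactly the required work. First, $\phi(X_{\mathrm{max}})\subset V_{\mathrm{max}}$: for $x\in X_{\mathrm{max}}$ one has $\pi_V(\phi(x))=\ol{\phi}(\pi(x))\in\ol{\phi}(S_{\mathrm{max}})$, which is disjoint from $\ol{\phi}(S_{\mathrm{max}}^{c})$ since $\ol{\phi}$ is injective. For properness, if $x_n\in X_{\mathrm{max}}$ has $\phi(x_n)$ converging in $V_{\mathrm{max}}$, then $\ol{\phi}(\pi(x_n))$ converges in $V\hq G\setminus\ol{\phi}(S_{\mathrm{max}}^{c})$, and properness and injectivity of $\ol{\phi}$ force $\pi(x_n)\to q\in S_{\mathrm{max}}$; the sequence thus stays over a relatively compact piece of $S_{\mathrm{max}}$, where the fibrewise closed embedding yields a convergent subsequence. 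Deleting the saturation $\pi_V^{-1}\bigl(\ol{\phi}(S_{\mathrm{max}}^{c})\bigr)$ removes precisely the boundary values at which $\phi(X_{\mathrm{max}})$ would otherwise accumulate, so $\phi(X_{\mathrm{max}})$ is closed in $V_{\mathrm{max}}$.

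The genuinely hard step is the second stage: producing a \emph{single} finite-dimensional $G$-module and map that embeds every slice model simultaneously, and does so \emph{properly}, uniformly over the non-compact Stein base $S_{\mathrm{max}}$. Heinzner's prime-number example shows that no finite module can embed $X$ across all strata at once; the entire point is that constancy of slice type confines the isotropy data to the single model $(H,N)$, so that finitely many $G$-finite invariants suffice on $X_{\mathrm{max}}$. Controlling the fibre directions properly — as opposed to merely separating orbits, which cannot be achieved for non-closed orbits in $N$ — is the delicate part, and it is this properness in the fibres, matched against the excision defining $V_{\mathrm{max}}$, that I expect to require the most care.
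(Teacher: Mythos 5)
You correctly reproduce the outer architecture (lift a Narasimhan map of $X\hq G$ to a trivial module $V_0$, enlarge by a module handling fibre directions, excise $\pi_V^{-1}\bigl(\ol{\phi}(S_{\mathrm{max}}^c)\bigr)$), and your injectivity and image-containment arguments are fine. But the entire content of the proposition sits in your second stage, and there you assert it rather than prove it: ``following the methods of~\cite{HeinznerEinbettungen} one assembles \dots\ a map $\phi'$ whose restriction to each slice model is a closed embedding'' is essentially the statement to be established. The paper's actual mechanism, which your sketch lacks, is an \emph{improving-maps lemma} with downward induction on dimension: one application of Heinzner's extension results (\cite[Sect.~1, Prop.~1 and Bemerkung~2]{HeinznerEinbettungen}) only yields a map that is an immersion at finitely many chosen points $x_i$ (one per maximal-dimensional component of $A_{\mathrm{max}}$), respectively a proper embedding on finitely many chosen closed orbits; the locus of failure is then an invariant analytic set whose trace on $S_{\mathrm{max}}$ has strictly smaller dimension, and one iterates, taking direct sums of the modules. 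The orbit half moreover requires a semicontinuity-of-orbit-type argument (if $G\acts\psi(x)$ were not closed, the closed orbit in its closure would have type strictly smaller than $(H)$, contradicting constancy of slice type on $S_{\mathrm{max}}$) to guarantee closedness of image orbits away from the smaller bad set. Note also that your $\phi'$ must embed entire $\pi$-fibres, which contain \emph{non-closed} orbits --- exactly the obstruction you flag yourself; the paper obtains this not fibre-by-fibre but from ``immersion $+$ closed embedding on each closed orbit'' via \cite[Sect.~2, Prop.~2]{HeinznerEinbettungen}. Finally, your properness argument (``the sequence stays over a relatively compact piece of $S_{\mathrm{max}}$, where the fibrewise closed embedding yields a convergent subsequence'') is not valid as stated: a holomorphic family of closed embeddings of a non-compact fibre need not be uniformly proper over compact pieces of the base, and the paper handles precisely this point by the argument in the last paragraph of the proof of \cite[Thm.~9.6]{PaHq}.

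There is also a concrete error in your immersivity argument. You claim $\ker d\phi_{0,x}$ equals the fibre tangent space because $\ol{\phi}_0$ is ``immersive on the smooth stratum $S_{\mathrm{max}}$.'' A Narasimhan map is an immersion only off the singular locus, and $S_{\mathrm{max}}$ is merely Zariski-open and dense in $X\hq G$: it may well meet $(X\hq G)_{\mathrm{sing}}$ (the paper itself, in the proof of Theorem~\ref{thm:aHquniversal}, has to \emph{choose} points where $\pi(x_0)$ is smooth). Over a singular point $q\in S_{\mathrm{max}}$ the differential of $\ol{\phi}_0$ on the Zariski tangent space can fail to be injective, so $\ker d\phi_{0,x}$ can be strictly larger than $T_x\bigl(\pi^{-1}(q)\bigr)$ and your ``the two kernels meet only in zero'' breaks down on a nonempty (if nowhere dense) subset of $X_{\mathrm{max}}$ --- which is fatal for a statement demanding a closed embedding on all of $X_{\mathrm{max}}$. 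The paper avoids any base/fibre splitting of tangent directions altogether: immersivity is produced directly on $X$ by equivariantly embedding saturated neighbourhoods $\pi^{-1}(U_i)$ into the module via the holomorphic Slice Theorem, and then propagated by the dimension induction described above.
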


The main technical part in the proof of Proposition~\ref{prop:genericembedding}
is contained in the following lemma, the proof of which is adapted
from~\cite[Sect.4, Lemma~1]{HeinznerEinbettungen}. For the reader's convenience
we describe the arguments here in some detail. In the following we write
$A_{\mathrm{max}} :=A\cap S_{\mathrm{max}}$ for any analytic subset $A\subset
X\hq G$.

\begin{lemma}\label{lem:improvingmaps}
Let $X$ be a $G$-irreducible Stein $G$-space and let $\pi\colon X\to X\hq G$
be its analytic Hilbert quotient. For any analytic subset $A$ of $X\hq G$ that
intersects the maximal slice-type stratum $S_{\mathrm{max}}$ non-trivially the
following holds:
\begin{enumerate}
\item There exists an analytic subset $A'$ of $A$ with
$\dim(A'_{\mathrm{max}})<\dim(A_{\mathrm{max}})$, a complex $G$-module $V_1$,
and an equivariant holomorphic map $\phi\colon X\to V_1$ that is an immersion
along $\pi^{-1}(A_{\mathrm{max}}\setminus A'_{\mathrm{max}})$.
\item There exists an analytic subset $A''$ of $A$ with
$\dim(A''_{\mathrm{max}})<\dim(A_{\mathrm{max}})$, a complex $G$-module $V_2$,
and an equivariant holomorphic map $\psi\colon X\to V_2$ whose restriction to
every closed $G$-orbit in $\pi^{-1}(A_{\mathrm{max}}\setminus
A''_{\mathrm{max}})$ is a proper embedding.
\end{enumerate}
\end{lemma}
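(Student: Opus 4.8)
The plan is to prove both assertions by one and the same mechanism: pass to a local model around a closed orbit lying over a \emph{generic} point of $A_{\mathrm{max}}$, produce the desired equivariant datum there using reductivity of the isotropy group, globalise it to a map defined on all of $X$ via the Stein hypothesis, and finally control the \emph{defect locus} --- the $G$-saturated analytic set where the resulting map fails to have the required property --- by showing that its image in $X\hq G$ is a proper analytic subset of $A_{\mathrm{max}}$, hence of strictly smaller dimension. Concretely, I would fix a generic point $q_0$ of a top-dimensional component of $A_{\mathrm{max}}$ and let $G\acts x_0$ be the unique closed orbit in $\pi^{-1}(q_0)$; since this orbit is affine, $H:=G_{x_0}$ is complex-reductive by Matsushima's theorem. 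The holomorphic Slice Theorem then identifies a saturated neighbourhood of $\pi^{-1}(q_0)$ with a tube $G\times_H W$, where $W$ is an $H$-stable neighbourhood of $0$ in the slice representation and $\pi^{-1}(q_0)$ corresponds to $G\times_H\mathcal N$ with $\mathcal N$ the $H$-null-fibre in $W$. The point of Proposition~\ref{prop:Lunastrat} is that the slice type is \emph{constant} along $S_{\mathrm{max}}$, so a single generic local model is representative enough to force the dimension to drop.

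For part~(1) I would choose the target $V_1$ so that $W$ embeds $H$-equivariantly into it and realise this inclusion as an $H$-equivariant immersion $\widetilde\phi\colon W\to V_1$; via the correspondence $\phi([g,w])=g\acts\widetilde\phi(w)$ this yields a $G$-equivariant map on the tube whose differential is injective at every point of $\pi^{-1}(q_0)$. Globalising this local datum to an honest equivariant holomorphic map $\phi\colon X\to V_1$ --- using that $X$ is Stein and that the reductive group $G$ acts on $\mathscr{O}_X(X)$ with enough finite-dimensional isotypic components, and enlarging $V_1$ if necessary --- produces a map that is still an immersion along the whole fibre $\pi^{-1}(q_0)$. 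The set $D_\phi:=\{x\in X\colon d\phi_x\text{ not injective}\}$ is $G$-invariant and analytic, and by construction it is disjoint from $\pi^{-1}(q_0)$. Hence $\pi(D_\phi)$ is analytic in $X\hq G$, does not contain the generic point $q_0$, and therefore $A':=A\cap\pi(D_\phi)$ is a proper analytic subset of $A$ with $\dim(A'_{\mathrm{max}})<\dim(A_{\mathrm{max}})$; by construction $\phi$ is an immersion along $\pi^{-1}(A_{\mathrm{max}}\setminus A'_{\mathrm{max}})$.

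For part~(2) the construction is parallel, but only the \emph{closed} orbits need to be controlled. The closed orbit $G\acts x_0\cong G/H$ is affine with $H$ reductive, so there is a $G$-equivariant proper embedding of $G/H$ into a finite-dimensional $G$-module, realised by finitely many matrix coefficients. Globalising exactly as above gives an equivariant $\psi\colon X\to V_2$ that restricts to a proper embedding on the closed orbit over $q_0$. The locus of points lying on closed orbits on which $\psi$ fails to be a proper embedding is again $G$-saturated and analytic; its image under $\pi$ avoids $q_0$, and absorbing it into an analytic subset $A''\subset A$ yields the required strict inequality $\dim(A''_{\mathrm{max}})<\dim(A_{\mathrm{max}})$.

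The main obstacle, common to both parts, is the globalisation step: turning the equivariant datum prescribed on the slice (respectively on the closed orbit) into a genuine global $G$-equivariant holomorphic map into a finite-dimensional $G$-module, while keeping the defect locus analytic and $G$-saturated so that its image in $X\hq G$ remains analytic and still misses the generic point~$q_0$. This is where the Stein hypothesis (Cartan's theorems and their equivariant refinements), the reductivity of $G$ (complete reducibility together with the isotypic decomposition of $\mathscr{O}_X(X)$), and the constancy of the slice type along $S_{\mathrm{max}}$ all have to be combined; it is precisely at this point that I would follow the argument of \cite[Sect.~4, Lemma~1]{HeinznerEinbettungen}.
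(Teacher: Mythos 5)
Your overall strategy --- local model via the holomorphic Slice Theorem at a closed orbit over a generic point, globalisation via Heinzner's equivariant extension results, then pushing the defect locus down to $X\hq G$ --- is indeed the paper's, but there are two genuine gaps. The first is in your dimension count: from ``$\pi(D_\phi)$ does not contain $q_0$'' you conclude that $A':=A\cap\pi(D_\phi)$ satisfies $\dim(A'_{\mathrm{max}})<\dim(A_{\mathrm{max}})$, but this fails whenever $A_{\mathrm{max}}$ has \emph{several} irreducible components of maximal dimension: $\pi(D_\phi)$ can avoid your single point $q_0$ and still contain another maximal-dimensional component entirely, so that $A'_{\mathrm{max}}$ has the same dimension as $A_{\mathrm{max}}$. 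The paper's proof instead discards the lower-dimensional components, chooses a point $p_i$ in \emph{every} irreducible component $A_i$ of $A_{\mathrm{max}}$, and arranges the immersion along $\dot\bigcup_{i\in I}\pi^{-1}(p_i)$ simultaneously. This is exactly where constancy of the slice type on $S_{\mathrm{max}}$ is used in an essential (not merely ``representative'') way: all the $x_i$ share one fixed model $G\times_H W$, hence one fixed module $V_1$ works for all components at once, and Heinzner's extension result \cite[Sect.~1, Prop.~1]{HeinznerEinbettungen} applies to the disjoint union of tubes. Note that $I$ may be infinite, so you cannot repair your argument by treating the components one at a time and taking direct sums of targets; the uniformity of the model is what makes the simultaneous treatment possible.

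The second gap is in part (2): you assert that ``the locus of points lying on closed orbits on which $\psi$ fails to be a proper embedding is $G$-saturated and analytic'', but this is unjustified --- the union of the closed orbits is in general not an analytic subset of $X$, and failure of properness of $\psi|_{G\acts x}$ is not a pointwise analytic condition, so there is no evident analytic defect set to push down. The paper circumvents this by working on the target: it sets $C:=\bigl\{q\in V_2\hq G \mid \Type(q)<H\bigr\}$, which is a Zariski-closed subset by Luna's stratification theory, and defines $A''$ via $\ol{\psi}^{-1}(C)$. Then, for a closed orbit $G\acts x$ over $A_{\mathrm{max}}\setminus A''_{\mathrm{max}}$ --- which has orbit type exactly $(H)$, since the orbit type of closed orbits is constant over $S_{\mathrm{max}}$ --- one argues: if $G\acts\psi(x)$ were not closed, the unique closed orbit in its closure would lie over a point outside $C$ yet have orbit type strictly smaller than $(H)$, a contradiction; hence $G\acts\psi(x)$ is closed of type $(H)$, forcing $G_x=G_{\psi(x)}$ up to conjugacy and $\psi|_{G\acts x}$ to be a proper embedding. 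This orbit-type comparison is the idea missing from your sketch, and without it (or some substitute for the analyticity claim) your construction of $A''$ does not go through.
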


\begin{proof}
Let $A\subset X\hq G$ be an analytic subset such that
$A_{\mathrm{max}}\not=\emptyset$. Removing the irreducible components which are
not of maximal dimension, we may assume without loss of generality that
$A_{\mathrm{max}}$ is pure-dimen\-sional. We denote the irreducible components
of $A_{\mathrm{max}}$ by $A_i$, $i\in I$, and choose for every $i\in I$ a point
$p_i\in A_i\setminus\bigcup_{j\neq i}A_j$. For each $i$, let $x_i\in
\pi^{-1}(p_i)$ be a point lying in the unique closed $G$-orbit in this fibre.

The slice type of every point $x_i$, $i\in I$, is equal to a fixed model
$G\times_H W$. This model admits an equivariant holomorphic embedding into a
$G$-module $V_1$. By the holomorphic Slice Theorem (\cite{Snow}), for each
$i\in I$ we can choose a small neighbourhood $U_i$ of $p_i$ in $X$ such that
$\pi^{-1}(U_i)$ has a $G$-equivariant holomorphic embedding into a saturated
open subset of the $G$-module $V_1$. Application of \cite[Sect.~1,
Prop.~1]{HeinznerEinbettungen} to the induced map $\dot\bigcup_{i\in
I}\pi^{-1}(U_i)\to V_1$ yields a $G$-equivariant holomorphic map $\phi\colon
X\to V_1$ that is an immersion along $\dot\bigcup_{i\in I}\pi^{-1}(p_i)$. The
set
\begin{equation*}
R:=\{x\in X \mid \phi\text{ is not an immersion in }x\}
\end{equation*}
is a $G$-invariant analytic subset of $X$. It follows that $A':=\pi(R)\cap A$
is an analytic subset of $A$. Since the map $\phi$ is an immersion at every
point in $\bigcup_{i \in I}\pi^{-1}(p_i)$, we conclude that
$\dim(A'_{\mathrm{max}})<\dim(A_{\mathrm{max}})$, as desired.

Let us now prove the second claim. Since by definition the slice type is
constant on $S_{\mathrm{max}}$, all orbits $G\acts x_i$ have the same orbit
type $(H)$, where $H$ is a complex-reductive subgroup of $G$. Since there is a
proper equivariant embedding of $G/H$ into some $G$-module $V_2$, there exists
a proper holomorphic map $\dot\bigcup_{i\in I}G\acts x_i\to V_2$. By
\cite[Sect.~1, Bemerkung~2]{HeinznerEinbettungen} this map extends to
a $G$-equivariant holomorphic map $\psi\colon X\to V_2$ such that $\psi|_{G
\acts x_i}$ is a proper embedding for each $i\in I$.

Let $\pi_{V_2}\colon V_2\to V_2\hq G$ be the analytic Hilbert quotient. For
$q\in V_2\hq G$ we set $\Type(q):=\Type(G\acts q)$ and define
\begin{equation*}
C:=\bigl\{q\in V_2\hq G\mid\Type(q)<H\bigr\} \subset V_2\hq G.
\end{equation*}
Then $\Omega:=V_2\setminus\pi_{V_2}^{-1}(C)$ is an algebraically Zariski-open
$G$-saturated subset of $V_2$, see~\cite[Ch.~III]{LunaSlice}. Note that we
have $\psi(x_i)\in \Omega$ for all $i\in I$. Let
$\ol{\psi}\colon X\hq G\to V_2\hq G$ be the induced map and set
$A'':=\ol{\psi}^{-1}(C)$, which is an analytic subset of $X\hq G$. We want to
show that for every $x\in X$ such that $G\acts x$ is closed with $\pi(x)\in
A_{\mathrm{max}}\setminus A''_{\mathrm{max}}$ the restriction of $\psi$ to
$G\acts x$ is a proper embedding into $V_2$.

For this suppose that $\psi(G\acts x)=G\acts\psi(x)$ is not closed. Then there
is a unique closed $G$-orbit $G\acts v\subset\Omega$ in the closure of
$G\acts\psi(x)$ and for this orbit we have $\Type(G\acts v)<\Type\bigl(G\acts
\psi(x)\bigr)\leq\Type(G\acts x)=(H)$, a contradiction. Consequently, $G\acts
\psi(x)$ must be closed in $V$ and, since it lies in $\Omega$, we have $\Type
\bigl(G\acts\psi(x)\bigr)=(H)$. Therefore, $\psi\colon G\acts x\to G\acts
\psi(x)$ is an isomorphism, hence $\psi|_{G\acts x}$ is a proper embedding.
Finally, since all the $x_i$ are contained in $A_{\mathrm{max}}\setminus
A''_{\mathrm{max}}$, clearly $\dim(A''_{\mathrm{max}})<\dim(A_{\mathrm{max}})$,
as claimed.
\end{proof}

Now we are in the position to give the proof of
Proposition~\ref{prop:genericembedding}.

\begin{proof}[Proof of Proposition~\ref{prop:genericembedding}]
Let $X$ be a $G$-irreducible Stein $G$-space with associated analytic Hilbert
quotient $\pi\colon X\to X\hq G$. Let $\varphi_0\colon X\hq G\to V_0$ be a
Narasimhan map and $\phi_0\colon X\to V_0$ the lifted map $\phi_0:=
\varphi_0\circ \pi$. By a repeated application of the first part of
Lemma~\ref{lem:improvingmaps} we obtain an equivariant holomorphic map $\phi_1
\colon X\to V_1$ to a complex $G$-module $V_1$ that is an immersion at every
point in $\pi^{-1}(S_{\mathrm{max}})$. Additionally, by a repeated application
of the second part of the same lemma we obtain an equivariant holomorphic map
$\psi\colon X\to V_2$ into a complex $G$-module whose restriction to every
closed orbit in $\pi^{-1}(S_{\mathrm{max}})$ is a closed embedding. Let
$V:=V_0 \oplus V_1\oplus V_2$ and let $\phi\colon X \to V$ be the product map.

Let $\pi_V\colon V\to V\hq G$ denote the quotient by the $G$-action and let
$\ol{\phi}\colon X\hq G\to V\hq G$ be the induced map. Since $\ol{\phi}$ is
proper (we started with $\varphi_0$ which was assumed to be a Narasimhan map),
the image of $S_{\mathrm{max}}^c:=(X\hq G)\setminus S_{\mathrm{max}}$ under
$\ol{\phi}$ is an analytic subset of $V\hq G$. The restriction
$\phi|_{X_\mathrm{max}}\colon X_{\mathrm{max}} \to V \setminus
\pi_V^{-1}(\ol{\phi}(S_{\mathrm{max}}^c)) =: V_{\mathrm{max}}$ is an immersion
and a closed embedding when restricted to any closed orbit in
$X_{\mathrm{max}}$. By \cite[Sect.~2, Prop.~2]{HeinznerEinbettungen} the
restriction of $\phi$ to every fibre of $\pi$ is a closed embedding. Hence,
$\phi$ is an injective immersion, since $\ol{\phi}$ separates the points of
$S_{\mathrm{max}}$. It therefore remains to check that
$\phi|_{X_{\mathrm{max}}}$ is proper, which can be done the same way as in the
last paragraph in the proof of \cite[Thm.~9.6]{PaHq}.
\end{proof}

\section{Constructing geometric quotients}

We continue to consider the action of a complex-reductive group $G$ on a Stein
space $X$ as well as the induced action of an algebraic subgroup $H$ of $G$. In
this section we prove the existence of an $H$-invariant Zariski-open dense
subset $\Omega$ of $X$ that admits a geometric quotient $p\colon \Omega\to Q$
with the properties listed in parts (1) -- (3) of the Main Theorem.

The idea of proof is to use the Weak Equivariant Embedding Theorem established
above in order to reduce to an algebraic situation. Then classical results
on algebraic transformation groups and especially Rosenlicht's theorem will
allow us to show the existence of geometric quotients for algebraic subgroups
$H\subset G$.

In order to avoid the corresponding technical difficulties we show in
Section~\ref{Subs:H-irr} that it is sufficient to treat the $H$-irreducible
case. In Section~\ref{Subs:UnivGeomQuot} we then discuss the universality
properties of (algebraic) geometric quotients before we prove the existence of
geometric $H$-quotients in the final subsection.

\subsection{Reduction to $H$-irreducible Stein $G$-spaces}\label{Subs:H-irr}

Let $X$ be a Stein $G$-space, and $H$ an algebraic subgroup of $G$. Suppose
that the Main Theorem is proven under the additional asumption that $X$ is
$H$-irreducible.

Let $X=\bigcup_{i=1}^mX_i$ be the decomposition of $X$ into its $H$-irreducible
components. Then, we may apply the Main Theorem to each of the components $X_i$
and obtain $H$-invariant Zariski-open dense subsets $\Omega_i\subset X_i$ with
geometric quotients $\Omega_i\to\Omega_i/H$. Note that we can choose $\Omega_i$
to be contained in $X_i \setminus \bigcup_{k\neq i}X_k$. It follows that the
disjoint union of the sets $\Omega_i$ is $H$-invariant, Zariski-open and dense
in $X$. Furthermore, it admits a geometric quotient
$\dot\bigcup_{i=1}^m\Omega_i\to\dot\bigcup_{i=1}^m(\Omega_i/H)$ by the
$H$-action with the properties listed in the Main Theorem.

\subsection{Universality of geometric quotients}\label{Subs:UnivGeomQuot}

First, we discuss the universality properties of geometric quotients with
respect to invariant analytic subsets.

\begin{lemma}\label{lem:imageundergeomquotmap}
Let $L$ be a complex Lie group and let $X$ be an $L$-irreducible holomorphic
$L$-space admitting a geometric quotient $p\colon X\to X/L$. If $A\subset X$ is
an $L$-invariant analytic subset of $X$, then $p(A)$ is an analytic subset of
$X/L$.
\end{lemma}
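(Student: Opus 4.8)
The goal is to prove that the image $p(A)$ of an $L$-invariant analytic subset $A$ under a geometric quotient map $p\colon X \to X/L$ is again analytic in $X/L$. Let me sketch the approach.

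The plan is to reduce the problem to a \emph{local} statement on $X/L$ and then to use the defining sheaf-theoretic property of the geometric quotient, namely that $(p_*\mathscr{O}_X)^L = \mathscr{O}_{X/L}$. Since analyticity is a local condition, I would fix a point $q \in X/L$ and argue in a neighbourhood of $q$. If $q \notin p(A)$, there is nothing to check on a small enough neighbourhood (using that $A$ is closed and $p$-saturated, so that $X/L \setminus p(A) = p(X \setminus p^{-1}(p(A)))$ is open in the quotient topology). So the substance is to exhibit, near a point $q \in p(A)$, finitely many holomorphic functions on a neighbourhood of $q$ whose common zero set is exactly $p(A)$.

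The key step is the following. Let $\mathscr{I}_A \subset \mathscr{O}_X$ be the full ideal sheaf of the analytic set $A$. Because $A$ is $L$-invariant, $\mathscr{I}_A$ is an $L$-invariant sheaf of ideals, so one may form the invariant pushforward $(p_*\mathscr{I}_A)^L$, which is a sheaf of ideals in $(p_*\mathscr{O}_X)^L = \mathscr{O}_{X/L}$. I would then show that the zero set of this ideal sheaf equals $p(A)$ set-theoretically. The inclusion $V\bigl((p_*\mathscr{I}_A)^L\bigr) \supseteq p(A)$ is immediate, since any invariant function vanishing on $A$ vanishes on $p(A)$. For the reverse inclusion, suppose $q \notin p(A)$; then the whole saturated closed set $p^{-1}(q)$ is disjoint from the closed invariant set $A$, and I must produce an $L$-invariant holomorphic function, defined near $p^{-1}(q)$, that vanishes on $A$ but not at $q$.

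The hard part will be this last separation, i.e. producing an \emph{invariant} local holomorphic function separating the saturated point $p^{-1}(q)$ from $A$. In full generality for an arbitrary complex Lie group $L$ this is exactly where the structure of the geometric quotient must be used, and it is not a formal consequence of $(p_*\mathscr{O}_X)^L = \mathscr{O}_{X/L}$ alone. I expect the cleanest route is to work in the quotient: choose a Stein (or at least holomorphically separable) neighbourhood $N$ of $q$ in $X/L$, so that its saturation $p^{-1}(N)$ is a holomorphic $L$-space in which $A \cap p^{-1}(N)$ is a closed invariant analytic set avoiding the fibre $p^{-1}(q)$. The coherence of $\mathscr{I}_A$ together with Cartan's Theorem~A supplies, near the compact-in-the-quotient fibre, finitely many (not necessarily invariant) holomorphic generators of $\mathscr{I}_A$; applying the universal property of $p$ — every invariant holomorphic map factors through the quotient — to suitable invariant combinations of these generators, or equivalently using that $(p_* \mathscr I_A)^L$ is coherent and that its stalk at $q$ cannot be the whole of $\mathscr{O}_{X/L,q}$ precisely because $q \in p(A)$, will identify $p(A)$ locally as a common zero set. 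Thus $p(A)$ is locally, hence globally, an analytic subset of $X/L$, and in fact $(p_*\mathscr{I}_A)^L$ endows it with a natural complex structure; I would remark that this is the analogue for geometric quotients of property~(4) of the analytic Hilbert quotient recalled in Section~\ref{subsect:strat}, and that $L$-irreducibility of $X$ is used only to guarantee that the quotient is well-behaved (in particular irreducible), so that the coherence and zero-set arguments apply uniformly.
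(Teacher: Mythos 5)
Your reduction of the lemma to the statement that $p(A)$ is the zero set of the invariant ideal sheaf $(p_*\mathscr{I}_A)^L$ has a genuine gap at exactly the point where you defer to coherence. The set-theoretic identity $V\bigl((p_*\mathscr{I}_A)^L\bigr)=p(A)$ is in fact the \emph{easy} part, easier than you make it: since $A$ is closed and $p$-saturated and $X/L$ carries the quotient topology, $p(A)$ is closed, so for $q\notin p(A)$ one may choose a neighbourhood $N$ of $q$ disjoint from $p(A)$, and then the constant $1$ is an invariant section of $\mathscr{I}_A$ over $p^{-1}(N)$; no separation argument is needed. What does \emph{not} follow is analyticity of this zero set: the zero set of an ideal sheaf that is not locally finitely generated need not be analytic (on $\C$, the sheaf of functions vanishing on $\{0\}\cup\{1/n \fdg n\in\N\}$ has precisely this non-analytic closed set as its zero set). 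So everything hinges on coherence, or at least local finite generation, of $(p_*\mathscr{I}_A)^L$ --- and in the generality of the lemma this is unavailable. The ``analogue of property (4) of the analytic Hilbert quotient'' that you invoke is a theorem (cf.\ the properties recalled in Section~2.3, due to Heinzner--Migliorini--Polito) specifically for \emph{reductive} groups acting on \emph{Stein} spaces, and its proof uses reductivity essentially; here $L$ is an arbitrary complex Lie group and $X$ is merely a holomorphic $L$-space, with no Stein hypothesis in the statement. For the same reason your appeal to Cartan's Theorem~A on $p^{-1}(N)$ is not justified, an $L$-orbit is not compact so finitely many generators near a whole fibre do not come for free, and for non-reductive $L$ there is no averaging procedure to convert non-invariant generators into ``suitable invariant combinations''. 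As written, the proof does not close.

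The paper's own argument is entirely different and avoids sheaf-theoretic input: by a theorem of Holmann, the mere existence of a geometric quotient forces all $L$-orbits to be analytic subsets of $X$ of one and the same dimension; the corollary in Section~3.7 of Fischer's book, concerning holomorphic maps with pure-dimensional fibres of constant dimension, then shows that $p(A)$ is \emph{locally} analytic in $X/L$; finally, the closedness of $p(A)$ noted above upgrades ``locally analytic'' to ``analytic''. That route is elementary, uses the geometric-quotient hypothesis exactly once (via Holmann's equidimensionality result), and needs neither coherence of invariant direct images nor any Stein or reductivity assumption. Your strategy could be salvaged in the special situation of analytic Hilbert quotients, where the coherence you need is a known theorem, but it does not prove the lemma in the stated generality.
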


\begin{proof}
Since the geometric quotient $p\colon X\to X/L$ exists, we conclude from
\cite[\S3, Satz~7]{HolmannSlice} that all $L$-orbits are analytic in $X$
and have the same dimension. Hence, the corollary in Section~3.7
of~\cite{Fischer} applies to show that $p(A)$ is locally analytic in $X/L$.
Since the image of an $L$-invariant closed set under $p$ is again closed, the
set $p(A)$ is analytic in $X/L$.
\end{proof}

More can be said if the quotient map $p\colon X\to X/L$ is assumed to be a
submersion:

\begin{lemma}\label{Lem:UniversalityofGeomQuot}
Let $L$ be a complex Lie group and let $X$ be an $L$-irreducible holomorphic
$L$-space admitting a geometric quotient $p\colon X\to X/L$. Suppose that $p$
is a submersion. Then, $p$ is universal with respect to $L$-invariant analytic
sets of $X$.
\end{lemma}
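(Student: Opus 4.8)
The plan is to prove the universality statement in Lemma~\ref{Lem:UniversalityofGeomQuot}, which asks us to show that when the geometric quotient map $p\colon X\to X/L$ is a submersion, its restriction $p|_A\colon A\to p(A)$ to any $L$-invariant analytic subset $A\subset X$ is again a geometric quotient for the $L$-action on $A$. By Lemma~\ref{lem:imageundergeomquotmap} we already know that $p(A)$ is an analytic subset of $X/L$, so the target is a bona fide complex space. The three conditions of Definition~\ref{defi:geometricquotient} must be verified for $p|_A$: that the fibres are exactly the $L$-orbits in $A$, that $p(A)$ carries the quotient topology, and that the pushforward sheaf of invariants recovers $\mathscr{O}_{p(A)}$.

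First I would dispose of conditions (1) and (2), which are essentially formal and do not require the submersion hypothesis. Since $A$ is $L$-invariant and $p$ is a geometric quotient for $X$, for $x\in A$ we have $p^{-1}(p(x))=L\acts x\subset A$, so the fibres of $p|_A$ are precisely the $L$-orbits in $A$; this gives~(1). For~(2), the quotient topology on $p(A)$ with respect to $p|_A$ agrees with the subspace topology induced from $X/L$: a set $B\subset p(A)$ has $L$-saturated preimage $(p|_A)^{-1}(B)=p^{-1}(B)\cap A$ open in $A$ iff $p^{-1}(B)$ is open in a neighbourhood of $A$, and using that $p$ carries the quotient topology together with the fact that $A$ is saturated, one checks $B$ is open in $p(A)$ in the subspace topology; the saturation of $A$ is what makes this clean.

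The substance of the lemma, and where the submersion hypothesis is genuinely used, is condition~(3): the identity $(p|_A)_*\mathscr{O}_A)^L=\mathscr{O}_{p(A)}$. Here the plan is to reduce to a local, infinitesimally-trivialised model. Because $p$ is a submersion of complex spaces with $X$ $L$-irreducible (hence pure-dimensional, with all $L$-orbits of constant dimension by the cited Satz~7 of~\cite{HolmannSlice}), locally $p$ looks like a projection $U\times F\to U$ where $F$ is a transversal slice to the orbits and $U\subset X/L$ is an open set over which we may choose a local holomorphic section. I would use such a section $\sigma\colon U\to X$ together with the submersion property to obtain, after shrinking, a biholomorphism identifying $p^{-1}(U)$ equivariantly with an $L$-saturated product, so that invariant functions on $p^{-1}(U)$ correspond exactly to functions on $U$ pulled back along $p$. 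The point is then that $A\cap p^{-1}(U)$, being $L$-saturated, corresponds under this identification to $p(A)\cap U$ times the orbit direction, whence an $L$-invariant holomorphic function on $A\cap p^{-1}(U)$ descends to a holomorphic function on $p(A)\cap U$, and conversely. Patching these local descents gives the sheaf identity globally.

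The main obstacle I anticipate is handling the possible singularities of $A$ (and of $X$) in establishing condition~(3): the clean product-bundle picture over $S_{\mathrm{max}}$-type strata is transparent on the smooth locus, but one must ensure that invariant holomorphic functions on the singular analytic set $A$ still descend to \emph{holomorphic} — not merely continuous or weakly holomorphic — functions on the analytic space $p(A)$. The submersion hypothesis is exactly the tool that rescues this: since $p|_A$ is a restriction of a submersion, it remains an open map onto $p(A)$ with locally trivial orbit structure, so the local section $\sigma$ can be arranged to meet $A$ transversally and realise $p(A)\cap U$ as a holomorphic retract of $A\cap p^{-1}(U)$. I would therefore take care to phrase the local trivialisation so that it restricts compatibly to $A$, which is what guarantees that the descended function is holomorphic on the reduced complex space $p(A)$ and completes the verification that $p|_A$ is a geometric quotient.
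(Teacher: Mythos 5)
Your proposal is correct and follows essentially the same route as the paper: conditions (1) and (2) via the saturation trick (extend an invariant open subset of $A$ to an open set of $X$ and saturate it), and condition (3) by locally trivialising the submersion (the paper invokes the theorem in \S 2.18 of \cite{Fischer}, putting $p$ locally in the form of a projection $D\times\wt{V}\to\wt{V}$ with $D\subset\C^N$ a domain) and using that the saturated set $A$ corresponds to $D\times p(A)$ in such a chart, so that the invariant function visibly descends to a holomorphic function on the reduced space $p(A)$. One minor remark: neither transversality of the local section to $A$ nor equivariance of the trivialisation is needed (or available in general) --- since $A$ is $L$-saturated, i.e.\ $A=p^{-1}\bigl(p(A)\bigr)$, any local section automatically maps $p(A)$ into $A$, which is exactly what makes your descent argument work.
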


\begin{proof}
Given an $L$-invariant analytic subset $A$ of $X$, we must show that the map
$p|_A\colon A \to p(A)$ fulfills properties (1) -- (3) of
Definition~\ref{defi:geometricquotient}. We already know from
Lemma~\ref{lem:imageundergeomquotmap} that $p(A)$ is an analytic subset of
$X/L$. The fibres of $p|_A$ are $G$-orbits, since the same is true for $p$.
Hence, it remains to show that $p(A)$ carries the quotient topology with
respect to $p|_A$ and that the structure sheaf of $p(A)$ as a reduced complex
subspace of $X/L$ is isomorphic to the sheaf of $L$-invariant holomorphic
functions on $A$.

Using that $X/L$ carries the quotient topology with respect to $p$ one checks
directly that the same is true for $A$ and $p|_A$: Let $U \subset A$ be an
$L$-invariant open subset. By definition of the subspace topology on $A$, there
exists an open subset $\widetilde U$ in $X$ such that $\widetilde U \cap A =U$.
Then, $\widehat U := G\acts \widetilde U$ is $G$-invariant, open, and
still fulfills $\widehat U \cap A =U$. Consequently, $p(\widehat U)$ is open in
$X/L$ and we have $p|_A(U) = p(\widehat U) \cap p(A)$. Hence, $p|_A(U)$ is open
in $p(A)$.

It remains to consider the structure sheaves. For this let $U\subset A$ be an
$L$-invariant open subset and let $f\in\mathscr{O}(U)^L$. Then there is a
continuous function $\bar f\colon p(U)\to\mbb{C}$ with $f=p^*\bar f$ and we
must show $\bar f\in\mathscr{O}_{p(A)}\bigl(p(U)\bigr)$. Since this assertion
is local, we may apply \cite[Thm.\ in \S2.18]{Fischer} to $p$ and obtain, after
possibly shrinking $U$, a commutative diagram
\begin{equation*}
\xymatrix{
\wt{U}\ar[rr]^\psi\ar[dr]_p & & D\times \wt{V}\ar[dl]^{\pi_{\wt{V}}}\\
 & \wt{V}, &
}
\end{equation*}
where $\wt{U}$ is an open subset of $X$ such that $U=\wt{U}\cap A$, where
$\wt{V}$ is an open subset of $X/L$ such that $p(\wt{U})\subset\wt{V}$, and
where $D$ is a domain in $\mbb{C}^N$ for $N=\dim X-\dim X/L$. In this picture
we have $\psi(A)=D\times p(A)$ and $f\circ\psi^{-1}(z,y)=\bar f(y)$ for $y\in
p(A)$. Hence, $\bar f$ is indeed holomorphic on $p(U)$, as was to be shown.
\end{proof}

\begin{rem}
Lemma~\ref{Lem:UniversalityofGeomQuot} can be used to prove the following
observation which might be of independent interest: Let $H$ be an algebraic
group and let $X$ be an algebraic $H$-irreducible $H$-variety admitting an
(algebraic) geometric quotient $p\colon X\to X/H$ such that $p$ is a
submersion. If $A\subset X$ is an $H$-invariant analytic subset, then
$p|_A\colon A\to p(A)$ is a (holomorphic) geometric $H$-quotient.
\end{rem}

\subsection{Existence of geometric quotients for algebraic subgroups of $G$}
\label{subsect:geometricquotients}

Let $G$ be complex-reductive and let $X$ be an $H$-irreducible Stein
$G$-space where $H$ is an algebraic subgroup of $G$. Note that $X$ is also
$G$-irreducible. We want to combine the results of the previous two sections in
order to prove that geometric quotients for the $H$-action exist on
Zariski-open dense subsets of $X$.

We introduce some notation in order to prepare the proof of statements
(1)--(3) of the Main Theorem. Let $\phi\colon X\to V$ be the weak equivariant
embedding constructed in Proposition~\ref{prop:genericembedding} and let
$S_{\mathrm{max}}$ be the maximal slice-type stratum in $X\hq G$. Since
the induced map $\ol{\phi}\colon X\hq G\to V\hq G$ is in particular proper,
$\ol{\phi}(S_\mathrm{max}^c)$ is an analytic subset of $V\hq G$ and the
set $V_{\mathrm{max}}=V\setminus\pi_V^{-1}\bigl(\ol{\phi}(S_{\mathrm{
max}}^c)\bigr)$ is analytically Zariski-open in $V$. Let $Y$ be the algebraic
Zariski-closure of $\phi(X)$ in $V$. The preimage $X_{\mathrm{max}}=
\pi^{-1}(S_{\mathrm{max}})$ is analytically Zariski-dense in $X$, hence
the algebraic Zariski-closure of $\phi(X_{\mathrm{max}})$ coincides with $Y$.
Note furthermore that $Y$ is $H$-irreducible.

In summary, we have found a $G$-equivariant map $\phi\colon X\to Y$ into an
$H$-irreducible affine variety $Y$ which is a proper embedding from
$X_{\mathrm{max}}$ into $Y_{\mathrm{max}}:=Y\cap V_{\mathrm{max}}$. Since the
$G$-action on $Y$ is algebraic we may now apply classical results on algebraic
transformation groups and transport them to $X$ via $\phi$.

By Rosenlicht's Theorem, see Theorem~\ref{thm:Rosenlicht}, there exists an
algebraically Zariski-open $H$-irreducible subset $\Omega_Y$ of $Y$ that admits
an algebraic geometric quotient $p_Y\colon\Omega_Y\to\Omega_Y/H$ by the
$H$-action. In the next step we will shrink $\Omega_Y$ in order to improve the
properties of $p_Y$ and of $\Omega_Y/H$. Note that the set where $p_Y$ is a
submersion is an $H$-invariant algebraically Zariski-open subset of $\Omega_Y$;
shrinking $\Omega_Y$ we may assume that $p_Y$ is a submersion. Repeating this
procedure if necessary we may also assume that $\Omega_Y$ and $\Omega_Y/H$ are
smooth and that $\Omega_Y/H$ is affine. Finally, using~\cite[Cor.~5.1]{Verdier}
we may furthermore suppose that $p_Y\colon\Omega_Y\to\Omega_Y/H$ is a
topological fibre bundle with respect to the complex topologies of $\Omega_Y$
and $\Omega_Y/H$. Note that we still have
$\phi(X_{\mathrm{max}})\cap\Omega_Y\not=\emptyset$ since $\Omega_Y$ is
algebraically Zariski-dense in $Y$.

After these preparations we are now in the position to prove
the existence of a geometric $H$-quotient on a dense Zariski-open subset of
$X$:

\begin{prop}\label{Prop:ExistenceGeomQuot}
Let $X$ be an $H$-irreducible Stein $G$-space where $H$ is an algebraic
subgroup of $G$. Then there exist a Zariski-open dense $H$-invariant subset
$\Omega$ of $X$ and a holomorphic map $p\colon\Omega\to Q$ to an irreducible
Stein space $Q$ that is a geometric quotient for the $H$-action on $\Omega$ and
additionally possesses the properties listed under (2) and (3) in the Main
Theorem.
\end{prop}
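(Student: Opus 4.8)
The plan is to transport the algebraic geometric quotient $p_Y \colon \Omega_Y \to \Omega_Y/H$ constructed above on $Y$ back to $X$ via the map $\phi$, using that $\phi$ restricts to a proper embedding of $X_{\mathrm{max}}$ into $Y_{\mathrm{max}}$. First I would set $\Omega := \phi|_{X_{\mathrm{max}}}^{-1}(\Omega_Y \cap Y_{\mathrm{max}})$; since $\phi(X_{\mathrm{max}}) \cap \Omega_Y \neq \emptyset$ and $\Omega_Y$ is algebraically Zariski-dense in $Y$, this $\Omega$ is a non-empty $H$-invariant analytically Zariski-open subset of $X_{\mathrm{max}}$, hence Zariski-open and dense in $X$. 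Because $\phi|_{X_{\mathrm{max}}}$ is a closed embedding onto the analytic subset $\phi(X_{\mathrm{max}})$ of $V_{\mathrm{max}}$, the image $\phi(\Omega)$ is an $H$-invariant analytic subset of the analytic space $\Omega_Y^h \cap Y_{\mathrm{max}}$, and $\phi$ identifies $\Omega$ biholomorphically and $H$-equivariantly with $\phi(\Omega)$.

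The central point is then to produce a geometric quotient for $\phi(\Omega)$ from that of $\Omega_Y$. Here I would invoke the universality result of the previous subsection: by Lemma~\ref{Lem:UniversalityofGeomQuot} (in the form noted in the remark following it), since $p_Y$ is a submersion and $\phi(\Omega)$ is an $H$-invariant analytic subset of $\Omega_Y^h$, the restriction $p_Y|_{\phi(\Omega)} \colon \phi(\Omega) \to p_Y(\phi(\Omega)) =: Q$ is a holomorphic geometric $H$-quotient, and $Q$ is an analytic subset of $(\Omega_Y/H)^h$. Composing with the biholomorphism $\phi|_\Omega$ gives the desired map $p \colon \Omega \to Q$, which is a geometric quotient for the $H$-action on $\Omega$. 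Property (3) — that $p$ is a submersion realising $\Omega$ as a topological fibre bundle — is inherited because $p_Y$ was arranged to be a submersion and a topological fibre bundle by Verdier's theorem~\cite{Verdier}, and these properties pass to the analytic subset $\phi(\Omega)$ via the local product description in the proof of Lemma~\ref{Lem:UniversalityofGeomQuot}; property (2), universality with respect to $H$-invariant analytic subsets of $\Omega$, then follows from Lemma~\ref{Lem:UniversalityofGeomQuot} applied to $p$ itself.

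It remains to verify that $Q$ is a \emph{Stein} space. Since $\Omega_Y/H$ was arranged to be affine, its analytification $(\Omega_Y/H)^h$ is Stein; the difficulty is that $Q$ is only an analytic subset of $(\Omega_Y/H)^h$, and one must check that the relevant subset is closed, so that it is again Stein as a closed analytic subspace. For this I would argue that $p_Y(\phi(X_{\mathrm{max}}) \cap \Omega_Y)$ is analytic in $\Omega_Y/H$, using properness of $\overline{\phi}$ together with the fact that $\phi(X_{\mathrm{max}})$ is closed in $V_{\mathrm{max}}$; then, possibly after further shrinking $\Omega_Y$ along the base so as to arrange that $Q$ is closed in the affine quotient, closedness of $Q$ in a Stein space yields that $Q$ is Stein. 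I expect this Steinness/closedness bookkeeping to be the main obstacle, since it requires a careful comparison of the algebraic and analytic Zariski topologies and controlled use of properness to guarantee that no boundary is lost when passing from $\Omega_Y$ to its analytic subset $\phi(\Omega)$; irreducibility of $Q$ follows from the $H$-irreducibility of $\Omega$ established earlier.
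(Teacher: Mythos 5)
Your construction of the geometric quotient itself follows the paper's route (transport Rosenlicht's quotient through $\phi$, then apply Lemma~\ref{Lem:UniversalityofGeomQuot} to the submersion $p_Y$), and that part is essentially sound, up to one slip you should repair: $\phi(X_{\mathrm{max}})$ is closed in $V_{\mathrm{max}}$ only, so $\phi(\Omega)$ is an analytic ($=$ closed) subset of the $H$-saturated open set $\Omega_Y\cap Y_{\mathrm{max}}$ but in general \emph{not} of $\Omega_Y$ itself; you must first restrict $p_Y$ to the saturated open set $\Omega_Y\cap Y_{\mathrm{max}}$ (still a geometric quotient onto its open image) and apply the lemma there. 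This is fixable, but it is exactly the source of the genuine gap in your last paragraph: with your choice of $\Omega$, the quotient $Q$ is only analytic in the \emph{analytically} open set $p_Y(\Omega_Y\cap Y_{\mathrm{max}})\subset(\Omega_Y/H)^h$, which need not be Stein, and your two proposed repairs do not work. The claim that $p_Y\bigl(\phi(X_{\mathrm{max}})\cap\Omega_Y\bigr)$ is analytic in $\Omega_Y/H$ fails for the same reason as above ($\phi(X_{\mathrm{max}})\cap\Omega_Y$ is not closed in $\Omega_Y$; properness of $\ol{\phi}$ controls fibres of $\pi_V$, not the boundary behaviour inside $\Omega_Y$), and no algebraic shrinking of $\Omega_Y$ can make $Q$ closed in the affine quotient, because the boundary locus $\pi_V^{-1}\bigl(\ol{\phi}(S_{\mathrm{max}}^c)\bigr)$ is only analytically, not algebraically, Zariski-closed.

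The paper closes precisely this gap by choosing $\Omega$ more carefully \emph{before} quotienting, so that Steinness comes for free. One picks a non-constant $f\in\mathscr{O}_{Y\hq G}(Y\hq G)$ vanishing on $\ol{\phi}(S_{\mathrm{max}}^c)$ (possible since $Y\hq G$ is affine), sets $U:=\pi^{-1}\bigl(\{f\neq 0\}\bigr)=\{\pi^*f\neq 0\}\subset Y$, and defines $\Omega:=\phi^{-1}(U\cap\Omega_Y)$, which is automatically contained in $X_{\mathrm{max}}$, $H$-invariant, Zariski-open and dense. The point of this device is that the $G$-invariant (hence $H$-invariant) function $\pi^*f|_{\Omega_Y}$ descends to $\bar f\in\mathscr{O}_{\Omega_Y/H}(\Omega_Y/H)$ with $\pi^*f|_{\Omega_Y}=p_Y^*\bar f$, so that $p_Y(U\cap\Omega_Y)=\{\bar f\neq 0\}$ is the complement of a hypersurface in the affine (hence Stein) space $\Omega_Y/H$, and is therefore Stein open; since $Q=p_Y\bigl(\phi(\Omega)\bigr)$ is analytic in it by Lemma~\ref{Lem:UniversalityofGeomQuot}, $Q$ is Stein. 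Note that even in the paper's proof $Q$ is never arranged to be closed in the affine quotient --- closedness in a Stein open subset suffices --- so the correct target of the ``bookkeeping'' you anticipated is not closedness in $\Omega_Y/H$ but the identification of the image of the shrunken set as a non-vanishing locus of a single invariant holomorphic function. Without this (or an equivalent) device, Steinness of $Q$ does not follow from your construction.
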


\begin{proof}
We use the notation introduced above. We first define the desired set $\Omega$.
To this end, let $\pi\colon Y\to Y\hq G$ denote the Hilbert quotient of
the $G$-action on $Y$ and note that, since $Y\hq G$ is an affine variety and
thus a Stein space, we may find a non-constant function $f\in\mathscr{O}_{Y\hq
G}(Y\hq G)$ which vanishes on $\ol{\phi}(S_\mathrm{max}^c)$. Consequently,
$(Y\hq G)\setminus\{f=0\}$ is an analytically Zariski-open Stein subset of
$Y\hq G$. Let $U$ be its inverse image under $\pi$ in $Y$ and define
\begin{equation*}
\Omega:=\varphi^{-1}(U\cap\Omega_Y)\subset X.
\end{equation*}
By construction, $\Omega$ is an $H$-irreducible analytically Zariski-open dense
subset of $X$ contained in $X_\mathrm{max}$. Moreover, we define
\begin{equation}\label{Eqn:DefQuotMap}
p:=p_Y\circ(\phi|_\Omega)\colon\Omega\to Q:=p_Y\bigl(\phi(\Omega)\bigr)
\subset\Omega_Y/H.
\end{equation}
Since $\phi\colon X_\mathrm{max}\to Y_\mathrm{max}$ is a closed embedding, its
image is an analytic subset of $Y_\mathrm{max}$ biholomorphic to
$X_\mathrm{max}$. Since the analytically Zariski-open subset $\Omega\subset X$
is contained in $X_\mathrm{max}$, the image $\phi(\Omega)$ equals the analytic
subset $\phi(X_\mathrm{max})\cap U\cap\Omega_Y$ of $U\cap\Omega_Y$ and
$\phi|_\Omega\colon\Omega\to\phi(\Omega)$ is biholomorphic.

By construction the map $p_Y\colon\Omega_Y\to\Omega_Y/H$ is a submersion, hence
Lemma~\ref{Lem:UniversalityofGeomQuot} applies to show that $p_Y\bigl(
\phi(\Omega)\bigr)=Q$ is an analytic subset of $p_Y(U\cap\Omega_Y)$ and that
$p\colon\Omega\to Q$ is a geometric quotient for the $H$-action on $\Omega$.
Moreover, $p\colon\Omega\to Q$ is a submersion and a topological fibre bundle;
both properties are inherited from $p_Y$. Together with another application of
Lemma~\ref{Lem:UniversalityofGeomQuot} this shows the properties listed under
(2) and (3) in the Main Theorem.

The proof is completed by showing that $Q$ is a Stein space: Since $Q$ is an
analytic subset of $p_Y(U\cap\Omega_Y)$, for this it suffices to prove that
$p_Y(U\cap\Omega_Y)$ is a Stein open subset of $\Omega_Y/H$. Recall that
$U=\pi^{-1}(\{f\not=0\})=\{\pi^*f\not=0\}$ by definition. Since
$(\pi^*f)|_{\Omega_Y}$ is $H$-invariant, there is a function $\bar
f\in\mathscr{O}_{\Omega_Y/H}(\Omega_Y/H)$ such that $\pi^*f|_{\Omega_Y}=
p_Y^*\bar f$. It follows that $p_Y(U\cap\Omega_Y)=\{\bar f\not=0\}\subset
\Omega_Y/H$, and consequently $p_Y(U\cap\Omega_Y)$ is a Stein open subset of
the Stein space $\Omega_Y/H$.
\end{proof}

Thus, we have shown the existence of a  geometric
$H$-quotient with the properties listed in parts (1) -- (3) of the Main Theorem under the assumption that $X$ is $H$-irreducible. Combining this
with the observation noted in Section~\ref{Subs:H-irr}, parts (1), (2), and (3)
of the Main Theorem are proven.

\section{Pushing down meromorphic functions}\label{section:mero}

In this section we will prove that the geometric quotient $p\colon \Omega\to Q$
constructed in the previous section additionally has the properties stated in
parts (4)--(6) of the Main Theorem, thus completing its proof. Before we do
this in Section~\ref{Subsection:mero}, we give a criterion in terms of
meromorphic functions for a densely defined holomorphic map to extend to a
weakly meromorphic map.

\subsection{Meromorphic functions and weakly meromorphic maps}

The following lemma is concerned with the relation of
Definition~\ref{defi:meromorphic} to meromorphic functions. It will be used in
the proof of part (4) of the Main Theorem in the next subsection.

\begin{lemma}\label{Lem:ConstructingWeaklyMeroMaps}
Let $X$ be a complex space, $A \subset X$ a nowhere dense analytic subset, $Y
\subset U \subset \C^N$ an analytic subset of an analytically Zariski-open
subset $U$ in $\C^N$, and $z_1, \dots, z_N$ linear coordinates on $\C^N$. Let
$\phi\colon X \setminus A \to \C^N$ be a holomorphic map with $\phi(X\setminus
A) \subset Y$. Assume that $\phi^*(z_j)$ extends to a meromorphic function on
$X$ for all $j = 1, \dots, N$. Then, $\phi\colon X\setminus A \to Y$ is weakly
meromorphic.
\end{lemma}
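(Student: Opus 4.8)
The plan is to verify Definition~\ref{defi:meromorphic} directly. Fix a point $p_0 \in A$ and a one-dimensional complex submanifold $C$ of $X$ with $C \cap A = \ol C \cap A = \{p_0\}$. Suppose, seeking a contradiction, that there exist two distinct accumulation points $q_0 \neq q_0'$ in $Y$, each realised along some sequence in $C \setminus A$ converging to $p_0$ in the sense of the definition. Since $q_0$ and $q_0'$ are distinct points of $\C^N$, they differ in at least one coordinate, say $z_{j_0}(q_0) \neq z_{j_0}(q_0')$. The strategy is to exploit this coordinate to contradict the hypothesis that $\phi^*(z_{j_0})$ extends to a meromorphic function $F_{j_0}$ on $X$.

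First I would reduce to the behaviour along the curve $C$. Restrict the meromorphic function $F_{j_0} := \phi^*(z_{j_0})$, a priori defined on all of $X$, to the one-dimensional submanifold $C$. The key point is that $C$ is a smooth curve and $p_0$ is an isolated point of $C \cap A$, so near $p_0$ the curve $C \setminus \{p_0\}$ lies in $X \setminus A$, where $\phi$ and hence $F_{j_0}$ are honestly holomorphic. Thus $F_{j_0}|_C$ is a meromorphic function on the Riemann surface $C$, holomorphic on a punctured neighbourhood of $p_0$. A meromorphic function of one complex variable has a well-defined limit in $\P_1$ at each point: either it extends holomorphically (finite limit) or has a pole (limit $\infty$). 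In either case, $\lim_{p \to p_0,\, p \in C} z_{j_0}(\phi(p))$ exists in $\P_1 = \C \cup \{\infty\}$ and is therefore \emph{unique}.

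The contradiction then follows. Both sequences realising $q_0$ and $q_0'$ lie in $C \setminus A$ and converge to $p_0$; applying $z_{j_0} \circ \phi = F_{j_0}|_C$ along each, the accumulation values must both equal the single limit of $F_{j_0}|_C$ at $p_0$. But $z_{j_0}(q_0)$ and $z_{j_0}(q_0')$ are finite (as $q_0, q_0' \in Y \subset \C^N$) and distinct, so they cannot both coincide with this common limit. This rules out two distinct accumulation points, establishing weak meromorphy.

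I expect the main obstacle to be the careful handling of the one-variable limit argument at $p_0$, specifically justifying that $F_{j_0}|_C$ is a genuine meromorphic function on $C$ with a well-defined $\P_1$-valued limit. The subtlety is that the pole variety $P_{F_{j_0}}$ of $F_{j_0}$ could in principle contain $p_0$, and one must ensure that restricting to $C$ does not produce pathological (non-isolated or essential) behaviour. Since $C$ is one-dimensional and smooth and $F_{j_0}$ is meromorphic on $X$, the restriction is meromorphic on $C$ provided $C$ is not contained in the pole variety; but even if $p_0 \in P_{F_{j_0}}$, the restriction to the punctured curve $C \setminus \{p_0\}$ is holomorphic and one invokes the removable-singularity/pole dichotomy for bounded-or-unbounded holomorphic functions on a punctured disc (the one-variable analogue of Riemann extension together with the classification of isolated singularities). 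Handling the case where $C$ might a priori meet the indeterminacy locus requires invoking that the graph $\Gamma_{F_{j_0}}$ over $C$ is an analytic curve projecting finitely onto $C$, so the fibre over $p_0$ is a single point of $\P_1$, which is precisely the unique limit.
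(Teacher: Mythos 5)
Your overall plan is viable and genuinely different from the paper's proof: the paper disposes of the lemma in two citations, first invoking Remmert's Satz~33 to conclude that, since each $\phi^*(z_j)$ is meromorphic on $X$, the map $\phi$ is a meromorphic map into the Osgood space $\mathbb{O}^N=(\P_1)^N$ (hence in particular weakly meromorphic), and then using Stoll's Satz~3.13 to restrict the range from $\mathbb{O}^N$ to $Y$. Your direct verification of Definition~\ref{defi:meromorphic} via a single separating coordinate $z_{j_0}$ and the uniqueness of the $\P_1$-valued limit of a one-variable meromorphic function is more elementary and self-contained --- but only if the crucial one-variable step is actually established, and there your two proposed justifications both fail.

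Concretely: the claim that $g:=F_{j_0}|_{C\setminus\{p_0\}}$ has at worst a pole at $p_0$ does not follow from any ``removable-singularity/pole dichotomy for bounded-or-unbounded holomorphic functions on a punctured disc'' --- no such dichotomy exists, as $e^{1/t}$ is holomorphic and unbounded on the punctured disc with an essential singularity, so the classification of isolated singularities alone excludes nothing. Your fallback via the graph is also false in exactly the delicate case: if $p_0$ lies in the indeterminacy locus of $F_{j_0}$, the set $\Gamma_{F_{j_0}}\cap(C\times\P_1)$ need not project finitely onto $C$; for $F=z/w$ on $\C^2$ the fibre of $\Gamma_F$ over the origin is all of $\P_1$, so this intersection contains $\{p_0\}\times\P_1$ for every curve $C$ through $p_0$, and the analyticity of the closure of the graph of the \emph{restriction} is precisely what is at stake. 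The correct repair is the following local-quotient argument. Since $F_{j_0}$ is holomorphic on $X\setminus A$, its polar set satisfies $P_{F_{j_0}}\subset A$, and $C\cap A=\{p_0\}$ forces the one-dimensional germ of $C$ at $p_0$ not to be contained in $P_{F_{j_0}}$. The denominator ideal $\mathfrak{d}:=\{b\in\mathscr{O}_{X,p_0}\mid b\,F_{j_0}\in\mathscr{O}_{X,p_0}\}$ is coherent with zero set the germ of $P_{F_{j_0}}$, so by the Nullstellensatz there exists $b\in\mathfrak{d}$ with $b|_C\not\equiv 0$; writing $b\,F_{j_0}=a$ with $a$ holomorphic near $p_0$ and pulling back along a local parametrisation $\gamma$ of $C$ at $p_0$ gives $(b\circ\gamma)\cdot g=a\circ\gamma$ on the punctured disc with $b\circ\gamma\not\equiv 0$, whence $g=(a\circ\gamma)/(b\circ\gamma)$ is meromorphic at $0$ and the essential singularity is excluded. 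With this step supplied, your contradiction argument (two distinct finite accumulation values of $z_{j_0}\circ\phi$ along sequences in $C\setminus A$ versus the unique $\P_1$-limit of $g$) goes through and yields a complete, more elementary proof than the paper's.
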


\begin{proof}
We consider the compactification $\mathbb{O}^N = (\P_1)^N$ of $\C^N$, the
so-called Osgood space. By assumption, $\varphi_j := \phi^*(z_j)$ is a
meromorphic function on $X$ for all $j=1, \dots N$. Hence, by
\cite[Satz 33]{RemmertAbbildungen} the map $\phi\colon X\dasharrow \mathbb{O}^N$
is a meromorphic map in the sense of Remmert and therefore in particular also
weakly holomorphic. Since $\phi(X\setminus A) \subset Y$, the map $\varphi$ is still weakly meromorphic after restricting the
range to $Y$, see~\cite[Satz
3.13]{StollMeromorpheAbbildungenI}.
\end{proof}

\subsection{Completing the proof of the Main Theorem}\label{Subsection:mero}

For the reader's convenience we recall parts (4)--(6) of the
Main Theorem in the following

\begin{prop}\label{Prop:PullbackMero}
Let $H<G$ be an algebraic subgroup of a complex-reductive Lie group $G$ and let
$X$ be an $H$-irreducible Stein $G$-space. Let $p\colon\Omega\to\Omega/H$ be
the geometric quotient constructed in Proposition~\ref{Prop:ExistenceGeomQuot}.
Then,
\begin{enumerate}
\item[(4)] the quotient map $p\colon\Omega\to\Omega/H$ extends as a weakly
meromorphic map to $X$,
\item[(5)] for every $f\in\mathscr{M}_X(X)^H$ there exists a unique $\bar{f}\in
\mathscr{M}_{\Omega/H}(\Omega/H)$ such that $f|_\Omega=p^*\bar{f}$, and
\item[(6)] the $H$-invariant meromorphic functions on $X$ separate
$H$-orbits in $\Omega$.
\end{enumerate}
\end{prop}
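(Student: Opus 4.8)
The plan is to prove the three remaining parts of the Main Theorem by transporting the problem to the affine variety $Y$ and exploiting the fact that $\phi|_\Omega\colon\Omega\to\phi(\Omega)$ is biholomorphic, together with the algebraic Rosenlicht quotient $p_Y\colon\Omega_Y\to\Omega_Y/H$ already in hand. The key technical device for part (4) will be Lemma~\ref{Lem:ConstructingWeaklyMeroMaps}: to show that $p\colon\Omega\to\Omega/H$ extends weakly meromorphically to $X$, I would embed the target $\Omega_Y/H$ (which is affine) into some $\C^N$ via its regular functions, pull back the coordinate functions, and check that each pullback extends to a \emph{meromorphic} function on all of $X$. The point is that the coordinate functions on $\Omega_Y/H$ are $H$-invariant regular functions on $\Omega_Y$, hence restrict to $H$-invariant rational (thus meromorphic) functions on $Y$; composing with the equivariant map $\phi\colon X\to Y$ produces $H$-invariant meromorphic functions on $X$ whose restrictions to $\Omega$ are exactly the components of $p$ followed by the embedding. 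Lemma~\ref{Lem:ConstructingWeaklyMeroMaps} then immediately yields weak meromorphy.

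For part (5) the strategy is again to reduce to the algebraic quotient. Given $f\in\mathscr{M}_X(X)^H$, I would use the biholomorphism $\phi|_\Omega$ to view $f|_\Omega$ as an $H$-invariant meromorphic function on the locally closed analytic subset $\phi(\Omega)\subset\Omega_Y$. Since $p_Y$ is a \emph{submersive} geometric quotient, Lemma~\ref{Lem:UniversalityofGeomQuot} guarantees that $p_Y$ is universal with respect to $H$-invariant analytic sets; the local-triviality description of $p_Y$ established in the proof of that lemma (the chart $\wt{U}\to D\times\wt{V}$ in which invariant functions depend only on the $\wt{V}$-coordinate) shows that an $H$-invariant meromorphic function on the total space descends to a meromorphic function $\bar f$ on $Q=p_Y(\phi(\Omega))=\Omega/H$ with $f|_\Omega=p^*\bar f$. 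Concretely, one writes $f|_\Omega$ locally as a quotient of $H$-invariant holomorphic functions on saturated sets, pushes numerator and denominator down through $p$ using the universal property of the geometric quotient, and patches. Uniqueness of $\bar f$ is immediate because $p$ is surjective with the quotient topology and $\mathscr{M}_{\Omega/H}(\Omega/H)$ is a field (the quotient is irreducible).

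Part (6) should follow cleanly once (5) and the construction of $Q$ are in place: given two distinct $H$-orbits $H\acts x_1\neq H\acts x_2$ in $\Omega$, their images $p(x_1)\neq p(x_2)$ are distinct points of the irreducible Stein space $Q$. Since $Q$ is Stein, global holomorphic functions separate its points, so there is $\bar f\in\mathscr{O}_Q(Q)\subset\mathscr{M}_{\Omega/H}(\Omega/H)$ with $\bar f(p(x_1))\neq\bar f(p(x_2))$; the function $p^*\bar f$ is an $H$-invariant holomorphic function on $\Omega$, and by the Levi-type extension/closure arguments analogous to those used for $p$ itself (or directly by building it from an $H$-invariant meromorphic function on $X$ as in part (4)) it is the restriction of an element of $\mathscr{M}_X(X)^H$ taking distinct well-defined values at $x_1$ and $x_2$. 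I expect the main obstacle to be part (4), and specifically the verification that the pulled-back coordinate functions genuinely extend across the exceptional set $A=X\setminus\Omega$ as \emph{meromorphic} rather than merely weakly meromorphic functions on $X$: this requires controlling the poles using the properness of $\ol\phi$ and the fact that $A$ has an analytic image in $X\hq G$ of the right codimension, so that indeterminacy is confined to a nowhere-dense analytic set and Remmert's criterion (\cite[Satz~33]{RemmertAbbildungen}) applies.
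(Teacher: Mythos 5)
Your part (4) is essentially the paper's argument: embed the affine quotient into $\C^N$, observe that the coordinates pull back under $p_Y$ to $H$-invariant rational functions on $Y$, compose with $\phi$, and apply Lemma~\ref{Lem:ConstructingWeaklyMeroMaps}. One small correction there: the reason $\phi^*p_Y^*z_j$ is meromorphic on $X$ is not properness of $\ol{\phi}$ or any codimension count in $X\hq G$, but simply that $Y$ is the Zariski closure of $\phi(X)$, so no component of $X$ maps into the pole variety of a rational function on $Y$; hence the pole set pulls back to a nowhere dense analytic subset and the pull-back is meromorphic (the paper cites \cite[Ch.~6.3.3]{CAS}).

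Parts (5) and (6), however, contain genuine gaps. In (5), your concrete mechanism --- writing $f|_\Omega$ locally on \emph{saturated} sets as a quotient of $H$-invariant holomorphic functions and pushing numerator and denominator down --- is unjustified, and it is exactly the delicate point: invariant meromorphic functions need not be quotients of invariant holomorphic ones (this is the content of Example~\ref{ex:CstarOnCtwo} and of Theorem~\ref{thm:aHquniversal}), and the Poincar\'e-problem representation on a chart produces non-invariant numerator and denominator with no averaging available over the noncompact group $H$. The chart of Lemma~\ref{Lem:UniversalityofGeomQuot} does give you a holomorphic descent of $f$ off $p(P_f)$, but the actual content of (5) is the meromorphic extension of the descended function across $p(P_f)$, which your sketch never addresses. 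The paper sidesteps any quotient representation entirely: it applies Lemma~\ref{Lem:UniversalityofGeomQuot} to the $H$-invariant analytic set $\Gamma_f\subset\Omega\times\P_1$ and the product geometric quotient $P=p\times\id_{\P_1}$, concluding that $P(\Gamma_f)$ is analytic in $(\Omega/H)\times\P_1$, and then verifies directly that this image is a meromorphic graph. In (6), your primary argument fails: an arbitrary separating function $\bar f\in\mathscr{O}_Q(Q)$ (which exists since $Q$ is Stein) pulls back to an $H$-invariant holomorphic function on $\Omega$ that has no reason to extend meromorphically to $X$ --- the complement $X\setminus\Omega$ may have components of codimension one, so no Levi-type extension applies, and essential singularities can occur (compare $e^{1/z}$ on $\C^*\subset\C$). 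The correct route, which you relegate to a parenthesis, is the paper's: use only the finitely many functions $p^*z_j$ from part (4), which are by construction restrictions of genuine elements of $\mathscr{M}_X(X)^H$ and already separate the $H$-orbits in $\Omega$ because the $z_j$ embed the affine quotient.
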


\begin{rem}
Since $p\colon\Omega\to\Omega/H$ is an open holomorphic map, the pull-back
$p^*$ from $\mathscr{M}_{\Omega/H}(\Omega/H)$ to $\mathscr{M}_\Omega(\Omega)^H$
is well-defined and, as the proof of Proposition~\ref{Prop:PullbackMero} will
show, an isomorphism.
\end{rem}

\begin{proof}[Proof of Proposition~\ref{Prop:PullbackMero}]
In order to prove the first claim recall from
Section~\ref{subsect:geometricquotients} that by construction $\Omega/H$ is an
analytic subset of an analytically Zariski-open subset of an affine variety $Q$
and that $p_Y\colon\Omega_Y\to Q$ is the algebraic geometric quotient whose
existence is guaranteed by Rosenlicht's theorem. In particular the quotient map
$p_Y$ extends as a rational map to $Y$. Let us choose an embedding of $Q$ into
a finite dimensional complex vector space $V$ and let $(z_1,\dotsc,z_N)$ be
linear coordinates in $V$. These induce invariant rational functions
$p_Y^*z_j$, $j \in \{1,\dotsc,N\}$, on $Y$. Since $\phi^{-1}(A)$ is a nowhere
dense analytic set in $X$ for every nowhere dense algebraic set $A\subset Y$,
the pull-back $\phi^*p_Y^*z_j=p^*z_j$ is a meromorphic function on $X$ for
every $j$, see~\cite[Chapter~6.3.3]{CAS}. Thus by
Lemma~\ref{Lem:ConstructingWeaklyMeroMaps} the map $p=(p^*z_1,\dotsc,
p^*z_N)$ extends as a weakly meromorphic map to $X$.

For the second claim let $f\in\mathscr{M}_X(X)^H$ be given. By
abuse of notation we denote by $f$ also its restriction to $\Omega$ and thus
have $f\in\mathscr{M}_\Omega(\Omega)^H$. Recall that $f$ is holomorphic on
$\dom f=\Omega\setminus P_f$. Applying Lemma~\ref{Lem:UniversalityofGeomQuot}
we see that $\overline{P_f} := p(P_f)$ is a nowhere dense analytic
subset of $\Omega/H$. Since $P:=p\times\id_{\P_1}\colon\Omega\times\P_1\to
(\Omega/H)\times\P_1$ is a geometric quotient for the $H$-action on
$\Omega\times\P_1$, Lemma~\ref{Lem:UniversalityofGeomQuot} implies that
$P(\Gamma_f)=:\ol{\Gamma_f}$ is an analytic subset of $(\Omega/H)\times\P_1$.
We will prove that it is a meromorphic graph.

We summarise our setup in the following diagram.
\begin{equation*}
\begin{xymatrix}{
 \Omega\times\mbb{P}_1\ar[d]^P & \ar@{_{(}->}[l]
 \Gamma_f\ar[r]^{p_\Omega}\ar[d]^{P|_{\Gamma_f}}  & \Omega\ar^{p}[d] \\
 (\Omega/H)\times\mbb{P}_1 & \ar@{_{(}->}[l] \ol{\Gamma_{f}}
\ar[r]^>>>>{p_{\Omega/H}}  & \Omega/H.
}
\end{xymatrix}
\end{equation*}
First, we note that $p_{\Omega/H}^{-1}(\ol{P_f})=
P\bigl(p_\Omega^{-1}(P_f)\bigr)$ is a nowhere dense analytic subset of
$\ol{\Gamma_f}$. Moreover, the restriction of $f$ to $\dom (f)$ is holomorphic,
and hence there exists a uniquely defined holomorphic function $\bar{f}$ on the
open subset $p(\dom f)=(\Omega/H)\setminus\overline{P_f}$ such that
$p^*\bar{f}=f|_{\dom f}$. It follows from the construction that over $p(\dom f)$
the graph of $\bar{f}$ coincides with $\ol{\Gamma_f}$. In summary we have shown
that $\ol{\Gamma_f}$ is a meromorphic graph over $\Omega/H$. Consequently,
there exists a meromorphic function $\bar{f}\in\mathscr{M}_{\Omega/H}
(\Omega/H)$ such that $\Gamma_{\bar{f}}=\ol{\Gamma_f}$. By construction this
function fulfills $p^*\bar{f}=f \in \mathscr{M}_\Omega(\Omega)$.

Finally, for the proof of property (6) let $H\acts x$ and $H\acts y$ be two
orbits in $\Omega$. Then there exists a $1\leq j\leq N$ such that
$p^*z_j(x)\not=p^*z_j(y)$. By the same argument as above $p^*z_j$ yields an
$H$-invariant meromorphic function on $X$ which separates $H\acts x$ and
$H\acts y$.
\end{proof}

Applying the observation from Section~\ref{Subs:H-irr} in
order to remove the assumption of $H$--irreducibility finally completes the
proof of the Main Theorem.

\end{document}